\documentclass[final]{siamart1116}
\usepackage{geometry}
\usepackage{amsmath}
\usepackage{cases}
\usepackage{stmaryrd}
\usepackage{mathrsfs}
\usepackage{amssymb}
\usepackage{lipsum}
\usepackage{multirow}
\usepackage{amsfonts}
\usepackage{graphicx}
\usepackage{epstopdf}
\usepackage{algorithmic}
\usepackage{amsopn}
\usepackage{makecell,booktabs}
\usepackage{array}

\ifpdf
  \DeclareGraphicsExtensions{.eps,.pdf,.png,.jpg}
\else
  \DeclareGraphicsExtensions{.eps}
\fi
\numberwithin{theorem}{section}
\newsiamremark{remark}{Remark}
\newsiamremark{assumption}{Assumption}

\newtheorem{aproposition}{Proposition}

\newenvironment{aproof}{{\noindent\bf Proof:}}{\hfill$\Box$\medskip}

\ifpdf
\hypersetup{ pdftitle={Guide to Using  SIAM'S \LaTeX\ Style} }
\fi

\newcommand{\TheTitle}{Column $\ell_{2,0}$-norm regularized factorization model of low-rank matrix recovery and its computation}


\title{{\TheTitle}\thanks{{(revised)December 26, 2021}.
\funding{This work was supported by the National Natural Science Foundation of China
under projects No.11971177 and Guangdong Basic and Applied Basic Research Foundation (2020A1515010408).}}}

\author{Ting Tao\thanks{School of Mathematics, South China University of Technology, Guangzhou, China (\email{mattao@mail.scut.edu.cn}).}\and
  Yitian Qian\thanks{School of Mathematics, South China University of Technology, Guangzhou, China
  (\email{mayttqian@mail.scut.edu.cn}).}\and
  Shaohua Pan\thanks{School of Mathematics, South China University of Technology, Guangzhou, China (\email{shhpan@scut.edu.cn}).}
 }




 \begin{document}

 \maketitle

 \begin{abstract}
  This paper is concerned with the column $\ell_{2,0}$-regularized factorization
  model of low-rank matrix recovery problems and its computation.
  The column $\ell_{2,0}$-norm of factor matrices is introduced to promote
  column sparsity of factors and low-rank solutions. For this nonconvex
  discontinuous optimization problem, we develop an alternating
  majorization-minimization (AMM) method with extrapolation,
  and a hybrid AMM in which a majorized alternating proximal method
  is proposed to seek an initial factor pair with less nonzero columns
  and the AMM with extrapolation is then employed to minimize of a smooth
  nonconvex loss. We provide the global convergence analysis
  for the proposed AMM methods and apply them to the matrix completion problem
  with non-uniform sampling schemes. Numerical experiments are conducted with
  synthetic and real data examples, and comparison results with
  the nuclear-norm regularized factorization model and the max-norm
  regularized convex model show that the column $\ell_{2,0}$-regularized
  factorization model has an advantage in offering solutions of lower error and
  rank within less time.
 \end{abstract}

\begin{keywords}
 Low-rank matrix recovery, column $\ell_{2,0}$-norm, factorization model,
 alternating MM method
\end{keywords}


 \section{Introduction}\label{sec1}

  Low-rank matrix recovery problems aim at recovering a true but unknown low-rank
  matrix $M^*\in\mathbb{R}^{n\times m}$ from as few observations as possible,
  and have wide applications in a host of fields such as statistics,
  control and system identification, signal and image processing, machine learning,
  quantum state tomography, and so on (see, e.g., \cite{Davenport16,Fazel02,GroLFBE10,Zhou15}
  and the reference therein). When the rank $r^*$ of $M^*$ or a tight upper bound for it, say
  integer $r\ge 1$, is available, these problems can be modeled as
  the following rank constrained optimization model
  \[
   \min_{X\in \mathbb{R}^{n\times m}}\Big\{f(X)\ \ {\rm s.t.}\ \ {\rm rank}(X)\le r\Big\}
  \]
  where $f\!:\mathbb{R}^{n\times m}\rightarrow \mathbb{R}_{+}$ is a loss function.
  However, in many scenarios, only the rough upper estimation $\min(m,n)$ for $r^*$
  is available to us. Now it is reasonable to consider the model
  \begin{equation}\label{rank-reg}
   \min_{X\in \mathbb{R}^{n\times m}}\Big\{f(X)+\lambda\,{\rm rank}(X)\Big\},
  \end{equation}
  which leads to a desirable low-rank solution by tuning the regularization
  parameter $\lambda>0$. Unless otherwise stated, we assume that $f$ is smooth
  and its gradient $\nabla\!f$ is Lipschitz with modulus $L_{\!f}$.

  Due to the combinatorial property of the rank function, the problem \eqref{rank-reg}
  is NP-hard and it is impossible to seek a global optimal solution
  via an algorithm with a polynomial-time complexity. A common way to deal with it
  is to achieve a desirable solution by solving its convex relaxation problem.
  For the rank regularized problem \eqref{rank-reg},
  the popular nuclear norm relaxation method (see, e.g., \cite{Candes09,Candes11,Fazel02,Recht10})
  yields a desirable solution by solving a single convex minimization problem
  \begin{align}\label{Nuclear-norm}
  \min_{X\in \mathbb{R}^{n\times m}}\Big\{f(X)+\lambda\|X\|_*\Big\}.
  \end{align}
  In the past decade, this method has made great progress in theory
  (see, e.g., \cite{Candes09,Candes11,Negahban11,Negahban12,Recht10}).
  In spite of the satisfying theoretical results, to improve its computational efficiency
  remains a challenge. In fact, almost all convex relaxation algorithms
  for \eqref{rank-reg} require an economic SVD of an $n\times m$ matrix in each iteration,
  which poses the major computational bottleneck and restricts their scalability to large-scale problems.
  Inspired by this, recent years have witnessed the renewed interest in the
  Burer-Monteiro factorization model \cite{Burer03} of low-rank optimization problems.
  By replacing $X$ with $UV^\mathbb{T}$ where $(U,V)\in\!\mathbb{R}^{n\times r}\times\mathbb{R}^{m\times r}$
  for some $r\in(r^*,\min(n,m))$, the factorized form of \eqref{Nuclear-norm} is
  \begin{equation}\label{MC-Fnorm}
  \min_{U\in \mathbb{R}^{n\times r},V\in \mathbb{R}^{m\times r}}
  F_{\lambda}(U,V):=f(UV^\mathbb{T}\!)+\frac{\lambda}{2}\big(\|U\|_F^2+\|V\|_F^2\big).
  \end{equation}

  Although the factorization form tremendously reduces the number of
  optimization variables since $r$ is usually smaller than $\min(n,m)$,
  the intrinsic bi-linearity makes the factorized objective function nonconvex
  and introduces additional critical points that are not global optimizers of the
  factored optimization problem. A recent research line for factorized
  models focuses on their nonconvex geometry landscape, especially the strict
  saddle property (see, e.g., \cite{{Bhojanapalli16},ChiGe17,Ge17,Lee19,LiWang19,Li18,Park17,Zhu181}).
  That is, every critical point of the nonconvex factorized models
  is shown to be either a local optimizer or a strict saddle point (i.e.,
  the critical point at which the Hessian matrix has a strictly negative eigenvalue).
  Another research line considers the (regularized) factorization models from a local view
  and aims to characterize the convergence rate of the iterates in terms of a certain measure
  or the growth behavior of objective functions around the set of global optimal solutions
  (see, e.g., \cite{Jain13,{Park16},SunLuo16,Tu16,ZhangSo18,Zheng16}).
  Most of these results are obtained for the factorized model under an implicit assumption
  that $r=r^*$. As we mentioned above, in many scenarios only a rough upper estimation
  is accessible to $r^*$. Thus, to ensure that these theoretical results fully work in practice,
  it is necessary to seek a factorized model involving
  a regularized term to reduce $r$ to $r^*$ automatically.

  The squared Frobenius-norm term in \eqref{MC-Fnorm} indeed plays
  such a role, and it can also reduce the ambiguities caused by invertible transformations.
  However, to achieve a low-rank solution by solving model \eqref{MC-Fnorm}, a suitably large
  $\lambda$ is necessary which, as will be shown by Proposition \ref{critpoint-lowbound} in Appendix A,
  inevitably leads to a worse error bound to the true matrix $M^*$. In fact,
  the numerical results in \cite{Fang18} also indicate that the nuclear norm regularized model
  has a worse performance on matrix completion in non-uniform sampling setting (see also
  Figure \ref{fig1} in Section \ref{sec5.3}). In view of the weakness of the nuclear norm
  to promote low rank, Shang et al. \cite{Shang2016} considered the factorization model involving the bi-trace
  and tri-trace quasi-norms of factor matrices. Their bi-trace and tri-trace
  quasi-norms are only the approximations of the rank function, and it is unclear whether
  their model is effective or not for matrix completion in non-uniform sampling.
  Note that for any $X\!\in\!\mathbb{R}^{n\times m}$ with ${\rm rank}(X)\!\le\! r$,
  \begin{equation}\label{rank-chara}
   {\rm rank}(X)=\min_{U\in\mathbb{R}^{n\times r},V\in\mathbb{R}^{m\times r}}
   \Big\{\frac{1}{2}\big(\|U\|_{2,0}+\|V\|_{2,0}\big)\ \ {\rm s.t.}\ \ X=UV^{\mathbb{T}}\Big\},
  \end{equation}
  where $\|U\|_{2,0}$ is the column $\ell_{2,0}$-norm (the number of nonzero columns) of $U$.
  This, along with the works on the zero-norm (see \cite{LuZhang13,Lu14}),
  inspires us to study the column $\ell_{2,0}$-norm regularized model
  \begin{equation}\label{MS-FL20}
  \min_{U\in \mathbb{R}^{n\times r},V\in\mathbb{R}^{m\times r}}
  \Phi_{\lambda,\mu}(U,V)\!:=f(UV^\mathbb{T}\!)
   +\frac{\mu}{2}\big(\|U\|_F^2+\|V\|_F^2\big)+\lambda\big(\|U\|_{2,0}+\|V\|_{2,0}\big),
  \end{equation}
  where $\mu>0$ is a tiny constant and the term $\frac{\mu}{2}(\|U\|_F^2+\|V\|_F^2)$
  is added to ensure that \eqref{MS-FL20} has a nonempty global optimal solution set,
  and consequently a nonempty critical point set.

  Model \eqref{MS-FL20} is a little more complicated due to the nonsmooth term
  $\|U\|_{2,0}+\|V\|_{2,0}$, but by Proposition \ref{prop1-Phi} the introduction
  of this term does not induce additional critical points. Moreover, as will be shown
  by Proposition \ref{optsol-rankbound}, the critical points of $\Phi_{\lambda,\mu}$
  associated to a suitable $\lambda$ and a tiny $\mu$ will have a rank equal to $r^*$,
  provided that their objective values are not greater than that of the projection of
  the noisy observation onto the rank $r^*$-constraint set. Some of the critical
  points of $F_{\mu}$ indeed also have a rank equal to $r^*$, but unfortunately they
  can not be identified by solving model \eqref{MC-Fnorm} solely. To the best of our knowledge,
  there is no work to present such a result for the critical points of model \eqref{MC-Fnorm}.
  In particular, along with \cite[Theorem 3.1]{TaoPanBi19}, for some classes of loss functions $f$,
  when the critical point associated to such $\lambda$ is a non-strict critical point of $F_{\mu}$,
  the solution corresponding to it will have a desirable error bound to the true $M^*$.
  In addition, by combining Proposition \ref{prop2-Phi} with \cite[Theorem 3.1]{TaoPanBi19},
  we conclude that if $f$ satisfies the assumption of \cite[Theorem 3.1]{TaoPanBi19},
  the solution corresponding to a local minimizer of rank $r^*$ of model \eqref{MS-FL20}
  has a better error bound to the true $M^*$ than the solution corresponding to a local minimizer
  of rank $r^*$ of model \eqref{MC-Fnorm} does (see also Remark \ref{remark2.1}).
  These results demonstrate the superiority of model \eqref{MS-FL20}.
 \begin{figure}[h]
  \setlength{\abovecaptionskip}{0.2pt}
  \centering
 \includegraphics[height=8.0cm,width=4.5in]{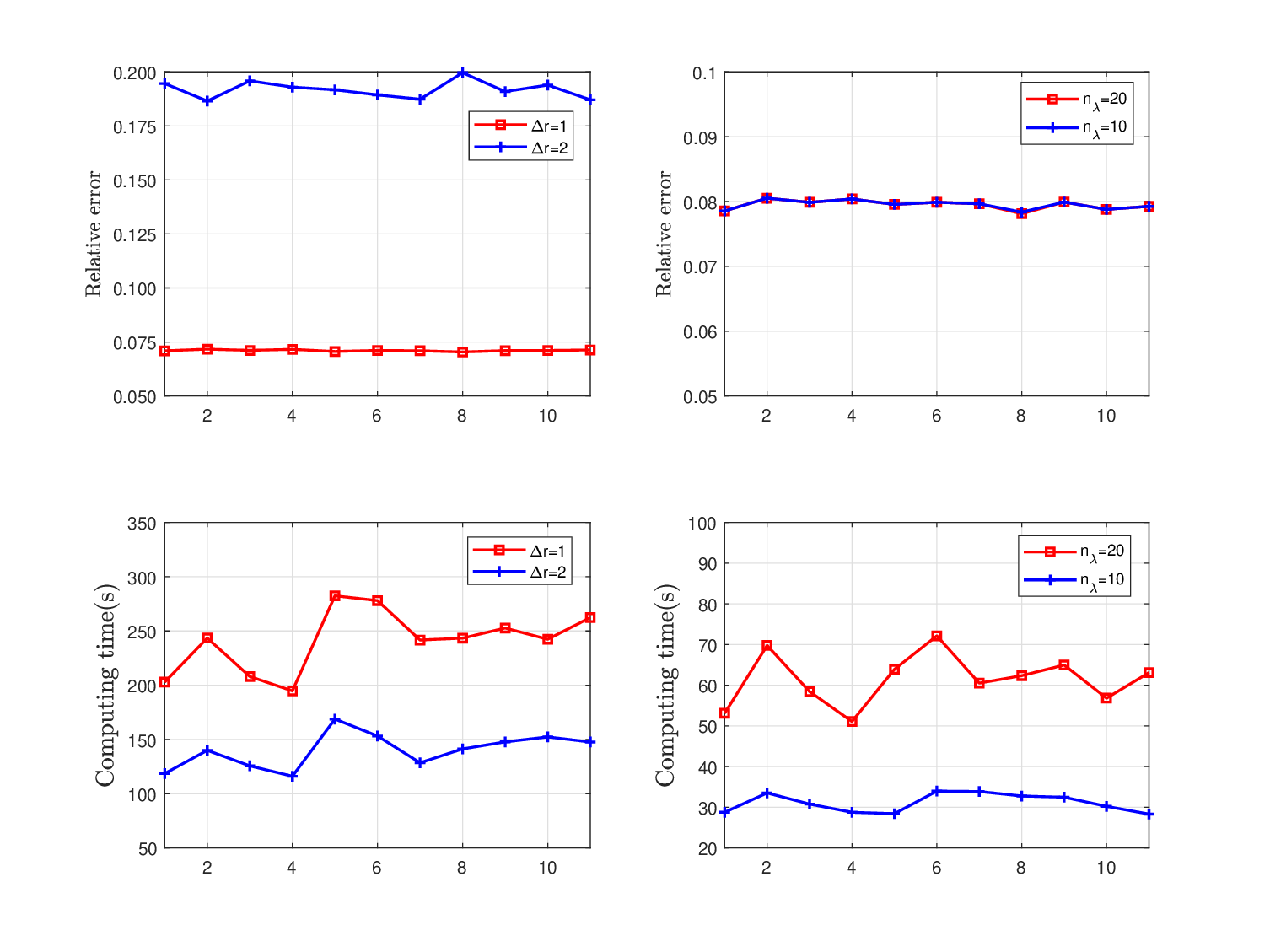}
  \caption{Solving problem \eqref{MS-FL20} by tuning $\lambda$ and minimizing $F_{\mu}$ with $\mu=10^{-8}$ by tuning $r$ (The example
  is generated randomly in the same way as in Section \ref{sec5.3} with ${\rm SR}=0.1, r^*=20$ and $n=m=2000$.)}
  \label{fig1-turning}
 \end{figure}

  Since an upper bound $r$ for $r^*$ is incorporated into model \eqref{MS-FL20},
  it is natural to ask why do not we consider model \eqref{MC-Fnorm} directly with $r$ treated as
  a tuning parameter. At first glance, tuning the integer $r\in[1,\min(m,n)]$ will be much easier
  than tuning the real number $\lambda>0$. Moreover, the popular solver ``LMaFit''
  developed by Wen et al. \cite{WenYinZhang12} for low-rank matrix recovery precisely
  uses $r$ as the tuning parameter. After running LMaFit for synthetic examples,
  we find that under the uniform sampling it outputs a solution with a rank equal to $r^*$
  whenever the initial upper bound $r$ is not too loose, but under the non-uniform sampling
  it outputs a solution with rank greater than $r^*$ even if the initial $r$ is less than
  twice of $r^*$. The subfigure on the right hand side of Figure \ref{fig1-turning}
  above shows that treating $r$ as a tuning parameter with change interval $\Delta r=1$
  will require more time, while treating $r$ with change interval $\Delta r=2$ yields a solution
  with a higher relative error since its rank is not equal to $r^*$. On the contrary, tuning
  $\lambda>0$ with change interval $\Delta\lambda=\frac{\overline{\lambda}-\underline{\lambda}}{n_{\lambda}-1}$
  can yield a desirable solution within less time, where $\overline{\lambda}$
  and $\underline{\lambda}$ are easily determined by the structure of the proximal operator
  of the $\ell_{2,0}$-norm (see Section \ref{sec5.2} for details).

  To compute the nonconvex and discontinuous model \eqref{MS-FL20}, we develop
  in Section \ref{sec3} an alternating majorization-minimization (AMM) method
  with extrapolation. Although our AMM method is a special case of the inertial
  proximal alternating linearized minimization (iPALM) method in \cite{Pock16},
  an inertial version of the PALM proposed by Bolte et al. \cite{Botle14},
  our global convergence analysis is more concise by removing the assumption on
  the boundedness of the generated sequence and quantifying the upper bound for
  the inertial parameter by the structure of $F_{\mu}$. In addition, our AMM method
  belongs to the framework of the block prox-linear method proposed by Xu and Yin \cite{XuYin17},
  but the convergence analysis there for acceleration is not applicable since
  the proximal operator of the column $\ell_{2,0}$-norm is not single-valued and
  it is unclear whether Condition 1 there holds or not for $\Phi_{\lambda,\mu}$.
  For the least squares loss $f$ from matrix completion problem, one may use
  the method proposed in \cite{YangPC18} to solve \eqref{MS-FL20},
  but the subsequential convergence there can not be
  obtained since the column $\ell_{2,0}$-norm is not continuous on its domain.
  Observe that the AMM method is actually a majorized alternating proximal (MAP) method
  with a variable metric proximal term. In Section \ref{sec4}, we first develop an MAP method
  that can yield stable nonzero column indices after a finite number of iterations, and then propose
  a hybrid AMM with a global convergence guarantee in which the MAP method is employed to
  seek an initial factor pair with less nonzero columns and the AMM with extrapolation is
  used to minimize $F_{\mu}$. The term ``global convergence'' in this work
  means the convergence of the whole sequence generated by an algorithm from any starting point.

  Finally, we apply the developed AMM methods to the matrix completion problem with
  non-uniform sampling schemes, and conduct numerical experiments with synthetic data and
  real datasets including the Jester joke, MovieLens, and Netflix datasets.
  Numerical comparisons with the alternating least squares (ALS) method \cite{Hastie15}
  for computing model \eqref{MC-Fnorm} and the ADMM
  developed in \cite{Fang18} for the SDP reformulation of the max-norm regularized
  convex model demonstrate that the AMM and the hybrid AMM for model \eqref{MS-FL20}
  have a remarkable advantage in offering solutions of lower error and rank for simulated data,
  while for the three real datasets, the hybrid AMM is superior to other three methods
  in terms of the NMAE and rank except jester-3, and it requires a comparable running time
  as ALS does and yields a desirable result for $10000\times 10000$ Netflix data in $300$ seconds.

  \noindent
  {\bf Notation:} $\mathbb{R}^{n\times m}$ represents the vector space of all $n\times m$ real matrices,
  equipped with the trace inner product $\langle X,Y\rangle={\rm trace}(X^{\mathbb{T}}Y)$
  and its induced Frobenius norm $\|\cdot\|_F$, and we stipulate $n\le m$.
  The notation $\mathbb{O}^{n\times k}$ denotes the set of matrices with orthonormal columns,
  and $\mathbb{O}^{n}$ signifies $\mathbb{O}^{n\times n}$. For a matrix $X\in\mathbb{R}^{n\times m}$,
  we denote by $\sigma(X)\in\mathbb{R}^n$ the singular value vector of $X$ arranged in a nonincreasing order,
  and by $\Sigma_{k}(X):={\rm Diag}(\sigma_1(X),\ldots,\sigma_k(X))$ the diagonal matrix consisting of
  the first $k\in[1,n]$ largest singular values. The notation $\|X\|$ and $\|X\|_*$ respectively denote
  the spectral norm and the nuclear norm of a matrix $X$, $X_i$ means the $i$th column of $X$,
  and $J_{\!X}$ and $\overline{J}_{\!X}$ denote the index set of nonzero and zero columns of $X$,
  respectively. For a self-adjoint positive semidefinite (PSD) linear operator
  $\mathcal{Q}\!:\mathbb{R}^{n\times m}\to\mathbb{R}^{n\times m}$,
  $\|\cdot\|_{\mathcal{Q}}=\sqrt{\langle \cdot,\mathcal{Q}\cdot\rangle}$ means its induced norm.
  Given a point $(\overline{U},\overline{V})\in\mathbb{R}^{n\times r}\times\mathbb{R}^{m\times r}$
  and a constant $\delta>0$, write $\mathbb{B}_{\delta}(\overline{U},\overline{V}):=\{(U,V)\,|\,\|(U,V)-(\overline{U},\overline{V})\|_F\le\delta\}$.
  For a vector $x$, $x^{\downarrow}$ means the vector consisting of the entries of $x$
  arranged in a nonincreasing order. Write $[k]:=\{1,2,\ldots,k\}$. In the sequel, we write $F(U,V):=f(UV^\mathbb{T}\!)$
  for $(U,V)\in\mathbb{R}^{n\times r}\times\mathbb{R}^{m\times r}$,
  and let $\nabla_{\!1}F(U',V')$ and $\nabla_{\!2}F(U',V')$ denote the partial gradient
  of $F$ at $(U',V')$ w.r.t. variable $U$ and $V$, respectively.
  Similarly, let $\partial_1\Phi_{\lambda,\mu}(U',V')$ and $\partial_2\Phi_{\lambda,\mu}(U',V')$
  denote the partial subdifferential of $\Phi_{\lambda,\mu}$ at $(U',V')$ w.r.t.
  variable $U$ and $V$, respectively.
 \section{Preliminaries}\label{Sec2}

  We first recall the notions of generalized subdifferentials and subderivative
  for an extended real-valued function $h\!:\mathbb{R}^p\to[-\infty,+\infty]$
  at a point with finite value.
  \begin{definition}\label{gsubdiff}
  (see \cite[Definition 8.3]{RW98})
  Consider a function $h\!:\mathbb{R}^p\to[-\infty,+\infty]$ and a point $x$
  with $h(x)$ finite. The regular subdifferential of $h$ at $x$ is defined as
  \[
    \widehat{\partial}h(x):=\bigg\{v\in\mathbb{R}^p\ \big|\
    \liminf_{x\ne x'\to x}\frac{h(x')-h(x)-\langle v,x'-x\rangle}{\|x'-x\|}\ge 0\bigg\};
  \]
  the (basic) subdifferential (also known as the limiting subdifferential) of $h$ at $x$ is
  defined as
  \[
    \partial h(x):=\Big\{v\in\mathbb{R}^p\ |\  \exists\,x^k\xrightarrow[h]{}x\ {\rm and}\
    v^k\in\widehat{\partial}h(x^k)\ {\rm with}\ v^k\to v\Big\};
  \]
  and the horizon subdifferential of $h$ at $x$, denoted by $\partial^{\infty}h(x)$, is defined as
  \[
    \partial^{\infty}h(x):=\Big\{v\in\mathbb{R}^p\ |\  \exists\,x^k\xrightarrow[h]{}x\ {\rm and}\
    v^k\in\widehat{\partial}h(x^k)\ {\rm with}\ \lambda^kv^k\to v\ {\rm for\ some}\ \lambda^k\downarrow 0\Big\},
  \]
  where the above notation $x^k\xrightarrow[h]{}x$ means $x^k\to x$ with $h(x^k)\to h(x)$.
 \end{definition}

 Let $\{(x^k,v^k)\}$ be a sequence converging to $(\overline{x},\overline{v})$
 from the graph of the mapping $\partial h$. Clearly, if $h(x^k)\to h(\overline{x})$
 as $k\to\infty$, then $(\overline{x},\overline{v})\in{\rm gph}\partial h$.
 In the sequel, a point $x\in\mathbb{R}^p$ with $0\in\partial h(x)$ is called
 a (limiting) critical point of $h$, and the set of critical points of $h$ is
 denoted by ${\rm crit}\,h$. By \cite[Theorem 10.1]{RW98}, a necessary condition
 for $\overline{x}\in\mathbb{R}^p$ to be a local minimizer of $h$ is
 $0\in\widehat{\partial}h(\overline{x})$.
 \begin{definition}\label{subderive}
 (see \cite[Definition 8.1]{RW98})
  Consider a function $h\!:\mathbb{R}^p\to[-\infty,+\infty]$ and a point $x$
  with $h(x)$ finite. The subderivative function
  $dh(x)\!:\mathbb{R}^p\to[-\infty,+\infty]$ is defined as
  \[
    dh(x)(w):=\liminf_{t\downarrow 0\atop w'\to w}\frac{h(x+tw')-h(x)}{t}
    \quad\ \forall w\in\mathbb{R}^p.
  \]
 \end{definition}
 \subsection{Subdifferentials and subderivative of column $\ell_{2,0}$-norm}\label{Sec2.1}

  The following lemma characterizes the subdifferentials and
  the subderivative function of the column $\ell_{2,0}$-norm.
 \begin{lemma}\label{gsubdiff-L20}
  Let $g(Z)\!:=\!\|Z\|_{2,0}$ for $Z\in\!\mathbb{R}^{n\times r}$.
  Fix any $(U,V)\in\!\mathbb{R}^{n\times r}\times\mathbb{R}^{m\times r}$. Then,
  \begin{itemize}
    \item [(i)] $\widehat{\partial}g(U)\!=\partial g(U)=\partial^{\infty}g(U)
                 \!=\Lambda_1\times\cdots\times \Lambda_r$ with
                 $\Lambda_i=\!\left\{\begin{array}{cl}
                   \!\{0\}^n& {\rm if}\ i\in\!J_{U},\\
                  \mathbb{R}^n &{\rm if}\ i\notin\!J_{U}.
                  \end{array}\right.$

  \item[(ii)] For any $\Gamma\in\mathbb{R}^{n\times r}$,
              \(
                dg(U)(\Gamma)=\!\left\{\begin{array}{cl}
                 0 & {\rm if}\ \overline{J}_{\!U}\cap J_{\Gamma}=\emptyset;\\
               \infty &{\rm if}\ \overline{J}_{\!U}\cap J_{\Gamma}\ne\emptyset,
               \end{array}\right.
             \)
             which means that for any $(S,W)$,
             \begin{equation*}
              d(g(U)+g(V))(S,W)
             =\left\{\begin{array}{cl}
              0 & {\rm if}\ \overline{J}_{\!U}\cap J_{\!S}=\emptyset,\overline{J}_{\!V}\cap J_{W}=\emptyset;\\
               +\infty & {\rm otherwise}.
              \end{array}\right.
             \end{equation*}
  \end{itemize}
  \end{lemma}
 \begin{proof}
  Let $\vartheta(z):={\rm sign}(\|z\|)$ for $z\in\mathbb{R}^n$.
  Fix an arbitrary $x\in\mathbb{R}^n$. Then, it holds that
  \[
   \partial^{\infty}\vartheta(x)
   =\partial\vartheta(x)
   =\widehat{\partial}\vartheta(x)
    =\left\{\begin{array}{cl}
      \!\{0\}  & {\rm if}\ x\ne0;\\
      \mathbb{R}^n &{\rm if}\ x=0
    \end{array}\right.=\big[\widehat{\partial}
   \vartheta(x)\big]^{\infty},
  \]
  where $\big[\widehat{\partial}\vartheta(x)\big]^{\infty}$
  denotes the recession cone of the closed convex set $\widehat{\partial}\vartheta(x)$.
  This by \cite[Corollary 8.11]{RW98} shows that $\vartheta$ is regular at $x$.
  In addition, for any given $w\in\mathbb{R}^n$, it is easy to calculate that
  \[
    d\vartheta(x)(w)=0\ \ {\rm when}\ \ x\ne 0
    \ \ {\rm and}\ \
    d\vartheta(0)(w)
    =\left\{\begin{array}{cl}
         0 & {\rm if}\ w=0;\\
        +\infty &{\rm if}\ w\ne 0.
    \end{array}\right.
  \]
  This means that $d\vartheta(x)(0)=0$. Together with \cite[Proposition 10.5]{RW98}
  and $g(Z)=\sum_{j=1}^r\vartheta(\|Z_j\|)$ for $Z\in\mathbb{R}^{n\times r}$,
  it is immediate to obtain part (i) and the first part of (ii). By combining
  the first part of (ii) and \cite[Proposition 10.5]{RW98}, we obtain the second
  part of (ii).
 \end{proof}

 Combining Lemma \ref{gsubdiff-L20} and \cite[Exercise 8.8]{RW98},
 we get the following characterization on $\partial\Phi_{\lambda,\mu}$.
 \begin{proposition}\label{subdiff-Phi}
  Fix any $\lambda>0$ and $\mu>0$. Consider any
  $(\overline{U},\overline{V})\!\in\mathbb{R}^{n\times r}\!\times\mathbb{R}^{m\times r}$.
  Then, it holds that
  $\widehat{\partial}\Phi_{\lambda,\mu}(\overline{U},\overline{V})
  =\partial\Phi_{\lambda,\mu}(\overline{U},\overline{V})
  =\partial_{1}\Phi_{\lambda,\mu}(\overline{U},\overline{V})
   \times\partial_{2}\Phi_{\lambda,\mu}(\overline{U},\overline{V})$ with
  \begin{subnumcases}{}\label{U-subdiff}
   \partial_{1}\Phi_{\lambda,\mu}(\overline{U},\overline{V})
   =\!\Big\{G\in\mathbb{R}^{n\times r}\,|\ G_{j}=\nabla\!f(\overline{U}\overline{V}^{\mathbb{T}})\overline{V}_{\!j}
     +\mu \overline{U}_{\!j}\ \ {\rm for}\ j\in J_{\overline{U}}\Big\},\\
  \partial_{2}\Phi_{\lambda,\mu}(\overline{U},\overline{V})
  =\!\Big\{H\in \mathbb{R}^{m\times r}\,|\ H_{j}=\big[\nabla\!f(\overline{U}\overline{V}^{\mathbb{T}})\big]^{\mathbb{T}}\overline{U}_{\!j}
  +\mu\overline{V}_{\!j}\ \ {\rm for}\ j\in J_{\overline{V}}\Big\},
  \label{V-subdiff}
 \end{subnumcases}
 which implies that $[\widehat{\partial}\Phi_{\lambda,\mu}(\overline{U},\overline{V})]^{\infty}
 =\partial^{\infty}\Phi_{\lambda,\mu}(\overline{U},\overline{V})$, and hence
 $\Phi_{\lambda,\mu}$ is a regular function.
 \end{proposition}
 \subsection{Properties of critical points to $\Phi_{\lambda,\mu}$}\label{sec2.2}

 From equations \eqref{U-subdiff}-\eqref{V-subdiff}, it is easy to check that the critical
 point set of $\Phi_{\lambda,\mu}$ defined on $\mathbb{R}^{n\times r}\times\mathbb{R}^{m\times r}$
 is strictly contained in that of $\Phi_{\lambda,\mu}$ defined on
 $\mathbb{R}^{n\times\kappa}\times\mathbb{R}^{m\times\kappa}$ for $r<\kappa$.
 The critical points of $\Phi_{\lambda,\mu}$ also have the following properties.
 \begin{proposition}\label{prop1-Phi}
  Fix any $\lambda>0$ and $\mu>0$. Then, ${\rm crit}\Phi_{\lambda,\mu}={\rm crit}F_{\mu}$,
  which means that every $(\overline{U},\overline{V})\in{\rm crit}\Phi_{\lambda,\mu}$
  satisfies $\overline{U}^\mathbb{T}\overline{U}=\overline{V}^\mathbb{T}\overline{V}$
  and $\sigma(\overline{U})=\sigma(\overline{V})$. In addition, every global
  minimizer $(\overline{U}^*,\overline{V}^*)$ of the function $\Phi_{\lambda,\mu}$ also satisfies
  ${\rm rank}(\overline{U}^*)=\|\overline{U}^*\|_{2,0}=\|\overline{V}^*\|_{2,0}={\rm rank}(\overline{V}^*)$.
 \end{proposition}
 \begin{proof}
  Pick any $(\overline{U},\overline{V})\in{\rm crit}\Phi_{\lambda,\mu}$.
  If one of $J_{\overline{U}}$ and $J_{\overline{V}}$ is empty, then they are both empty.
  Indeed, if $J_{\overline{U}}=\emptyset$ but $J_{\overline{V}}\ne\emptyset$,
  by Proposition \ref{subdiff-Phi},
  $\big[\nabla\!f(\overline{U}\overline{V}^{\mathbb{T}})\big]^{\mathbb{T}}\overline{U}_{\!j}+\mu\overline{V}_{\!j}=0$
  for $j\in J_{\overline{V}}$, which yields a contradiction $\overline{V}_{\!j}=0$ for $j\in J_{\overline{V}}$.
  Similarly, if $J_{\overline{V}}=\emptyset$, then $J_{\overline{U}}=\emptyset$ also holds.
  Thus, when either of $J_{\overline{U}}$ and $J_{\overline{V}}$ is empty,
  we have $\overline{U}=0$ and $\overline{V}=0$, which implies that
  $\nabla\!F_{\mu}(\overline{U},\overline{V})=0$.
  When $J_{\overline{U}}\ne\emptyset$ and $J_{\overline{V}}\ne\emptyset$,
  by Proposition \ref{subdiff-Phi} it holds that
  \[
    \nabla\!f(\overline{U}\overline{V}^{\mathbb{T}})\overline{V}_{\!j}+\mu\overline{U}_{\!j}=0
    \ \ {\rm for}\ j\in J_{\overline{U}}\ \ {\rm and}\ \
    \big[\nabla\!f(\overline{U}\overline{V}^{\mathbb{T}})\big]^{\mathbb{T}}\overline{U}_{\!j}+\mu\overline{V}_{\!j}=0
    \ \ {\rm for}\ j\in J_{\overline{V}}.
  \]
  The first equality implies that $\overline{V}_{\!j}\ne 0$ for $j\in J_{\overline{U}}$,
  and then $J_{\overline{U}}\subseteq J_{\overline{V}}$. The second equality
  implies that $\overline{U}_{\!j}\ne 0$ for $j\in J_{\overline{V}}$,
  and then $J_{\overline{V}}\subseteq J_{\overline{U}}$. Thus,
  $J_{\overline{U}}=J_{\overline{V}}:=J$. Consequently,
  \begin{equation*}
   \left\{\begin{array}{ll}
    \nabla\!f(\overline{U}\overline{V}^{\mathbb{T}})\overline{V}_{\!J}+\mu\overline{U}_{\!J}=0; \\
   \ [\nabla\!f(\overline{U}\overline{V}^{\mathbb{T}})]^{\mathbb{T}}\overline{U}_{\!J}+\mu\overline{V}_{\!J}=0,
   \end{array}\right.
  \end{equation*}
  which implies that $\nabla\!F_{\mu}(\overline{U},\overline{V})=0$ and
  $(\overline{U},\overline{V})\in{\rm crit}F_{\mu}$.
  Conversely, pick any $(\overline{U},\overline{V})\in{\rm crit}F_{\mu}$. Then
  $\nabla\!f(\overline{U}\overline{V}^{\mathbb{T}})\overline{V}+\mu\overline{U}=0$
  and $[\nabla\!f(\overline{U}\overline{V}^{\mathbb{T}})]^{\mathbb{T}}\overline{U}+\mu\overline{V}=0$.
  By Proposition \ref{subdiff-Phi}, $(0,0)\in \partial\Phi_{\lambda,\mu}(\overline{U},\overline{V})$,
  which means that $(\overline{U},\overline{V})\in{\rm crit}\Phi_{\lambda,\mu}$.
  From ${\rm crit}\Phi_{\lambda,\mu}={\rm crit}F_{\mu}$ and \cite[Lemma 2.2]{TaoPanBi19},
  it follows that every $(\overline{U},\overline{V})\in{\rm crit}\Phi_{\lambda,\mu}$
  satisfies $\overline{U}^\mathbb{T}\overline{U}=\overline{V}^\mathbb{T}\overline{V}$
  and $\sigma(\overline{U})=\sigma(\overline{V})$.

  Let $(\overline{U}^*,\overline{V}^*)$ be a global minimizer of $\Phi_{\lambda,\mu}$.
  Then ${\rm rank}(\overline{U}^*)={\rm rank}(\overline{V}^*)$ and $\|\overline{U}^*\|_{2,0}=\|\overline{V}^*\|_{2,0}$.
  Since $(\overline{U}^*,\overline{V}^*)$ is a global optimal solution of \eqref{MS-FL20},
  we deduce from \eqref{rank-chara} that $X^*=\overline{U}^*(\overline{V}^*)^\mathbb{T}$
  satisfies ${\rm rank}(X^*)=\frac{1}{2}(\|\overline{U}^*\|_{2,0}+\|\overline{V}^*\|_{2,0})$.
  If not, ${\rm rank}(X^*)<\frac{1}{2}(\|\overline{U}^*\|_{2,0}+\|\overline{V}^*\|_{2,0})$.
  Let $X^*$ have the SVD as $X^*=P\Sigma Q^\mathbb{T}$ for $P\in\mathbb{O}^{n}$
  and $Q\in\mathbb{O}^{m}$. Write $\widehat{U}=P_1\Sigma_1^{1/2}$ and $\widehat{V}=Q_1\Sigma_1^{1/2}$,
  where $P_1$ and $Q_1$ are the matrix consisting of the first $r$ columns of $P$ and $Q$, respectively,
  and $\Sigma_1$ is a diagonal matrix consisting of the first $r$ singular values. Then,
  by noting that $F_{\mu}(\widehat{U},\widehat{V})=F_{\mu}(\overline{U}^*,\overline{V}^*)$,
  \[
   \Phi_{\lambda,\mu}(\widehat{U},\widehat{V})=F_{\mu}(\widehat{U},\widehat{V})+\lambda(\|\widehat{U}\|_{2,0}+\|\widehat{V}\|_{2,0})\\
    =F_{\mu}(\widehat{U},\widehat{V})+2\lambda{\rm rank}(X^*)<\Phi_{\lambda,\mu}(\overline{U}^*,\overline{V}^*),
  \]
  a contradiction to the fact that $(\overline{U}^*,\overline{V}^*)$ is a global minimizer
  of $\Phi_{\lambda,\mu}$. Now combining ${\rm rank}(X^*)=\frac{1}{2}(\|\overline{U}^*\|_{2,0}+\|\overline{V}^*\|_{2,0})$
  with ${\rm rank}(X^*)\leq{\rm rank}(\overline{U}^*)\leq\|\overline{V}^*\|_{2,0}$
  yields the desired equalities.
 \end{proof}

  When $f$ has the form as in Proposition \ref{critpoint-lowbound} of Appendix A,
  for every $(\overline{U},\overline{V})\in{\rm crit}\Phi_{\lambda,\mu}$ we have
  \[
    \|\overline{U}\overline{V}^\mathbb{T}\!-M^*\|_F
    \geq\max\Big(0,\frac{\mu-\|\mathcal{A}^*\nabla h(\omega)\|}{L_{h}\|\mathcal{A}\|^2}\Big).
  \]
 Since $\mu$ is a tiny constant, this lower bound does not cause any inconsistency as
 that of Proposition \ref{critpoint-lowbound} does for model \eqref{MC-Fnorm}.
 The following proposition states that under a mild assumption on $f$,
 any critical point of model \eqref{MS-FL20} associated to a suitable $\lambda$
 and a tiny $\mu$ has a rank equal to the true $r^*$.
 \begin{proposition}\label{optsol-rankbound}
  Let $f(X)\!:=h(\mathcal{A}(X)-b)$ where $h\!:\mathbb{R}^p\to\mathbb{R}$ is a differentiable
  $\rho$-strongly convex function, $\mathcal{A}\!:\mathbb{R}^{n\times m}\to\mathbb{R}^p$
  is a linear operator and $b\in\mathbb{R}^p$ is a given vector.
  Let $M_{r^*}$ be the projection of $\mathcal{A}^*(b)$ onto the rank $r^*$-constraint set.
  Suppose the $2r^*$-restricted smallest eigenvalue $\alpha$ of $\mathcal{A}$ satisfies
  \(
   \frac{\rho\alpha\sigma_{r^*}(M_{r^*})}{2\sqrt{2}+1}\geq\|\mathcal{A}^*[\nabla h(\mathcal{A}(M_{r^*})\!-b)]\big\|_F.
  \)
  Then, for any critical point $(\overline{U},\overline{V})$ of $\Phi_{\lambda,\mu}$ associated to
  $\lambda\in[f(M_{r^*}),\frac{\rho\alpha}{16}\big(\sigma_{r^*}(M_{r^*})-
  \!\frac{1}{\rho\alpha}\|\mathcal{A}^*(\nabla h(\mathcal{A}(M_{r^*})\!-b))\|)^2]$
  and small enough $\mu>0$ such that
  $\Phi_{\lambda,\mu}(\overline{U},\overline{V})\le f(M_{r^*})+\mu\|M_{r^*}\|_*+2\lambda r^*$,
  it holds that ${\rm rank}(\overline{U}\overline{V}^{\mathbb{T}})=r^*$.
  \end{proposition}
  \begin{proof}
   Write $\overline{M}:=\!M_{r^*}-(\rho\alpha)^{-1}\mathcal{A}^*[\nabla h(\mathcal{A}(M_{r^*})\!-b)]$.
   Without loss of generality, we assume
  \[
    0<\mu <\min\Big\{\frac{\lambda}{\|M_{r^*}\|_*},\frac{\rho\alpha \sigma_{r^*}(\overline{M})}{2\sqrt{r^*}},
    \frac{\rho\alpha \sigma_{\min}(\overline{M})}{8},\frac{{\rho^2}\alpha^2\sigma^2_{r^*}(\overline{M})}
    {16\|\mathcal{A}^*[\nabla h(\mathcal{A}(M_{r^*})-b)]\|_*}\Big\},
  \]
  where $\sigma_{\min}(\overline{M})$ is the smallest nonzero singular value of $\overline{M}$.
  From $\Phi_{\lambda,\mu}(\overline{U},\overline{V})\le
  \!f(M_{r^*})\!+\mu\|M_{r^*}\|_*\!+2\lambda r^*$, we get
  $\lambda(\|\overline{U}\|_{2,0}+\|\overline{V}\|_{2,0})\le\!f(M_{r^*})+\mu\|M_{r^*}\|_*+2\lambda r^*$.
  Note that $f(M_{r^*})\le\lambda$ and $\mu<\frac{\lambda}{\|M_{r^*}\|_*}$.
  Hence, $\lambda(\|\overline{U}\|_{2,0}+\|\overline{V}\|_{2,0})< 2\lambda(r^*+1)$,
  which along with ${\rm rank}(\overline{U}\overline{V}^{\mathbb{T}})\le \|\overline{U}\|_{2,0}$
  and $\|\overline{U}\|_{2,0}=\|\overline{V}\|_{2,0}$ implies that
  ${\rm rank}(\overline{U}\overline{V}^{\mathbb{T}})\le r^*$.
  We next argue that $\overline{r}:={\rm rank}(\overline{U}\overline{V}^{\mathbb{T}})<r^*$
  can not hold. Suppose on the contradiction that $\overline{r}<r^*$. Then,
  \begin{align*}
    \Phi_{\lambda,\mu}(\overline{U},\overline{V})
    &=h(\mathcal{A}(\overline{U}\overline{V}^{\mathbb{T}})-b)
       +\frac{\mu}{2}\big(\|\overline{U}\|_F^2+\|\overline{V}\|_F^2\big)
       +\lambda\big(\|\overline{U}\|_{2,0}+\|\overline{V}\|_{2,0}\big)\\
    &\ge f(M_{r^*})+ \langle\nabla h(\mathcal{A}(M_{r^*})-b),\mathcal{A}(\overline{U}\overline{V}^{\mathbb{T}}\!-M_{r^*}) \rangle+\frac{\rho}{2}\|\mathcal{A}(\overline{U}\overline{V}^{\mathbb{T}}\!-M_{r^*})\|^2\\
    &\quad +\frac{\mu}{2}\big(\|\overline{U}\|_F^2+\|\overline{V}\|_F^2\big)
       +\lambda\big(\|\overline{U}\|_{2,0}+\|\overline{V}\|_{2,0}\big)\\
    &\ge \frac{\rho\alpha}{2}\|\overline{U}\overline{V}^{\mathbb{T}}\!-M_{r^*}\|^2
        +\langle\mathcal{A}^*[\nabla h(\mathcal{A}(M_{r^*})-b)],\overline{U}\overline{V}^{\mathbb{T}}\!-M_{r^*}\rangle\\
    &\quad +\frac{\mu}{2}\big(\|\overline{U}\|_F^2+\|\overline{V}\|_F^2\big)
       +\lambda\big(\|\overline{U}\|_{2,0}+\|\overline{V}\|_{2,0}\big)+f(M_{r^*})\\
    &\geq\frac{\rho\alpha}{2}\big\|\overline{U}\overline{V}^{\mathbb{T}}\!-M_{r^*}+(\rho\alpha)^{-1}
    \mathcal{A}^*[\nabla h(\mathcal{A}(M_{r^*})-b)]\big\|_F^2
      +\frac{\mu}{2}\big(\|\overline{U}\|_F^2+\|\overline{V}\|_F^2\big)\\
      &\quad\!+\!\lambda\big(\|\overline{U}\|_{2,0}\!+\!\|\overline{V}\|_{2,0}\big)\!
      -\!\frac{1}{2\rho\alpha}\!\big\|\mathcal{A}^*[\nabla h(\mathcal{A}(M_{r^*})-b)]\big\|_F^2\!+f(M_{r^*})\\
    &\!\geq\min_{{\rm rank}(X)\le\overline{r}}\Big\{\frac{\rho\alpha}{2}\|X\!-\!\overline{M}\|_F^2+\mu\|X\|_*\Big\}
      +\!2\lambda \overline{r}-\!\frac{1}{2\rho\alpha}\!\big\|\!\mathcal{A}^*[\nabla h(\mathcal{A}(M_{r^*})-b)]\big\|_F^2\!+f(M_{r^*})
  \end{align*}
  where the first inequality is due to the strong convexity of $h$, and
  the second one is using the fact that $\alpha$ is the $2{r^*}$-restricted
  smallest eigenvalue of $\mathcal{A}$. Note that $0<\mu<\rho\alpha\sigma_{r^*}(\overline{M})$.
  Then,
  \begin{align*}
  \min_{{\rm rank}(X)\le\overline{r}}\Big\{\frac{\rho\alpha}{2}\|X\!-\!\overline{M}\|_F^2+\mu\|X\|_*\Big\}
   &=-\frac{\overline{r}\mu^2}{2\alpha\rho}
     +\frac{\rho\alpha}{2}\sum_{i=\overline{r}+1}^n\sigma_i^2(\overline{M})-\mu\sum_{i=\overline{r}+1}^{n}\sigma_i(\overline{M})
     +\mu \|\overline{M}\|_* \\
  &\geq\!\Big(\frac{\rho\alpha}{4}\sum_{i=\overline{r}+1}^n\sigma_i^2(\overline{M})
     \!-\!\frac{\overline{r}\mu^2}{2\alpha\rho}
   \!-\!\mu\sum_{i=\overline{r}+1}^{n}\sigma_i(\overline{M})\Big)\\
  &\quad\!+\!\frac{\rho\alpha}{4}(r^*\!-\!\overline{r})\sigma_{r^*}^2(\overline{M})
       \!+\!\mu\|\overline{M}\|_*\!+\!\frac{\rho\alpha}{4}\sum_{i=r^*+1}^n\sigma_i^2(\overline{M})\\
  &\geq\!\frac{\rho\alpha}{4}(r^*\!-\!\overline{r})\sigma_{r^*}^2(\overline{M})+\mu\|\overline{M}\|_*
 \end{align*}
 where the last inequality is using $\mu<\min(\frac{\rho\alpha\sigma_{r^*}(\overline{M})}{2\sqrt{r^*}}\!,\!\frac{\rho\alpha \sigma_{\min}(\overline{M})}{8})$. From the last two inequalities,
  \begin{align*}
   \Phi_{\lambda,\mu}(\overline{U},\overline{V})
   &\ge\frac{\rho\alpha}{4}(r^*\!-\!\overline{r})\sigma_{r^*}^2(\overline{M})+\mu\|\overline{M}\|_*+\!2\lambda \overline{r}
        -\!\frac{1}{2\rho\alpha}\!\big\|\!\mathcal{A}^*[\nabla h(\mathcal{A}(M_{r^*})-b)]\!\big\|_F^2\!+f(M_{r^*})\\
   &\ge\frac{\rho\alpha}{4}(r^*\!-\!\overline{r})\sigma^2_{r^*}(\overline{M})\!+\!\mu\|M_{r^*}\|_*\!
     -\frac{\mu}{\rho\alpha}\big\|\mathcal{A}^*[\nabla h(\mathcal{A}(M_{r^*})-b)]\big\|_*\\
   &\quad+\!2\lambda \overline{r}\!+\!f(M_{r^*})-\!\frac{1}{2\rho\alpha}\big\|\mathcal{A}^*[\nabla h(\mathcal{A}(M_{r^*})-b)]\big\|_F^2
  \end{align*}
  where the second inequality is using the fact that $\|\overline{M}\|_*\ge \|M_{r^*}\|_*
  -(\rho\alpha)^{-1}\|\mathcal{A}^*[\nabla h(\mathcal{A}(M_{r^*})-b)]\|_*$.
  From the given assumption on $\alpha$, it follows that
  $ \frac{{\rho}\alpha}{16}(r^*\!-\!\overline{r})\sigma_{r^*}^2(\overline{M})
  -\frac{1}{2{\rho}\alpha}\!\|\mathcal{A}^*[\nabla h(\mathcal{A}(M_{r^*})-b)]\|_F^2\ge 0$,
  while from the range of $\mu$, we have $ \frac{{\rho}\alpha}{16}(r^*\!-\!\overline{r})\sigma_{r^*}^2(\overline{M})
  -\frac{\mu}{{\rho}\alpha}\big\|\mathcal{A}^*[\nabla h(\mathcal{A}(M_{r^*})-b)]\|_*>0$. Thus,
  \begin{align*}
    \Phi_{\lambda,\mu}(\overline{U},\overline{V})
   >\frac{\rho\alpha}{8}(r^*\!-\!\overline{r})\sigma^2_{r^*}(\overline{M})+f(M_{r^*})\!+\!\mu\|M_{r^*}\|_*\!+2\lambda\overline{r}
   \ge f(M_{r^*})\!+\!\mu\|M_{r^*}\|_*\!+2\lambda r^*
  \end{align*}
  where the second inequality is due to
  \(
   \frac{{\rho}\alpha}{8}(r^*\!-\!\overline{r})\sigma_{r^*}^2(\overline{M})\geq 2\lambda(r^*-\overline{r}),
  \)
  implied by the upper bound of $\lambda$ and the fact that
  \(
    \sigma_{r^*}(\overline{M})
    \ge\sigma_{r^*}(M_{r^*})-(\rho\alpha)^{-1}\|\mathcal{A}^*[\nabla h(\mathcal{A}(M_{r^*})-b)]\|>0.
  \)
  The last inequality contradicts the given assumption on the point $(\overline{U},\overline{V})$.
  Consequently, $\overline{r}=r^*$.
 \end{proof}

  Proposition \ref{optsol-rankbound} states that the critical points of $\Phi_{\lambda,\mu}$
  associated to a suitable $\lambda$ and a tiny $\mu$ must have rank $r^*$ if their objective values
  are not greater than $f(M_{r^*})+\mu\|M_{r^*}\|_*+2\lambda r^*$. Clearly,
  the global minimizer of $\Phi_{\lambda,\mu}$ associated to such $\lambda$ and $\mu$
  precisely belongs to this class of critical points. Although the condition involves the unknown $r^*$,
  Algorithm 1 and 3 developed in the next two sections provide an effective method for
  estimating it. By Proposition \ref{prop1-Phi}, some of the critical points of $F_{\mu}$
  also have a rank equal to $r^*$, but they can be identified only by leveraging model \eqref{MS-FL20}.
  To the best of our knowledge, there are no work to discuss
  which critical points of model \eqref{MC-Fnorm} will have a rank equal to that of the true $M^*$.
  By \cite[Theorem 3.1]{TaoPanBi19}, when $f$ satisfies the assumption there,
  if the critical point $(\overline{U},\overline{V})$ of $\Phi_{\lambda,\mu}$ associated to
  a suitable $\lambda$ (say, to guarantee that ${\rm rank}(\overline{U}\overline{V}^{\mathbb{T}})\le r^*$)
  and a tiny $\mu$ is a non-strict critical point of $F_{\mu}$, there exists a constant $\overline{c}>0$
  such that
  \begin{equation}\label{error-bound}
    \|\overline{U}\overline{V}^{\mathbb{T}}\!-M^*\|_F\leq\overline{c}\sqrt{r^*(\mu^2+\|\nabla f(M^*)\|^2)}.
  \end{equation}

  To close this section, we disclose the relation between the (strong) local minimizer
  of $\Phi_{\lambda,\mu}$ and that of $F_{\mu}$. Recall that $(\overline{U},\overline{V})$
  is a strong local minimizer of $F_{\mu}$ if $\exists\alpha>0$ and $\delta>0$ such that
 \begin{equation}\label{strong-min}
   F_{\mu}(U,V)\ge F_{\mu}(\overline{U},\overline{V})+\alpha\|(U,V)-(\overline{U},\overline{V})\|_F^2
   \quad\forall(U,V)\in\mathbb{B}_{\delta}(\overline{U},\overline{V}).
 \end{equation}
 \vspace{-0.3cm}
 \begin{proposition}\label{prop2-Phi}
  Fix any $\lambda>0$ and $\mu>0$. If $(\overline{U},\overline{V})$ is a (strong) local
  minimizer of $F_{\mu}$, then it is a (strong) local minimizer of $\Phi_{\lambda,\mu}$;
  and if $(\overline{U},\overline{V})$ is a nonzero (strong) local minimizer of $\Phi_{\lambda,\mu}$,
  then $(\overline{U}_{\!J},\overline{V}_{\!J})$ with $J=J_{\overline{U}}$
  is a (strong) local minimizer of $F_{\mu}$ defined on
  $\mathbb{R}^{n\times |J|}\times\mathbb{R}^{m\times |J|}$.
 \end{proposition}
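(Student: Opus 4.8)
The plan is to exploit the decomposition $\Phi_{\lambda,\mu}=F_{\mu}+\lambda(\|\cdot\|_{2,0}+\|\cdot\|_{2,0})$ together with the fact that the column $\ell_{2,0}$-norm is locally constant away from the boundary of its nonzero-column pattern. The first implication rests on lower semicontinuity, the second on an embedding of subspace perturbations into the full space, made clean by Proposition \ref{prop1-Phi}.

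For the first implication, I would start from the observation that the column $\ell_{2,0}$-norm is lower semicontinuous: if a column of $\overline{U}$ is nonzero, it stays nonzero in a small ball, so $J_{\overline{U}}\subseteq J_U$ and hence $\|U\|_{2,0}\ge\|\overline{U}\|_{2,0}$ for all $(U,V)$ near $(\overline{U},\overline{V})$, and likewise for $V$. Choosing $\delta>0$ small enough that the $F_\mu$ local-minimality inequality and these two column inclusions all hold on $\mathbb{B}_{\delta}(\overline{U},\overline{V})$, I would add $\lambda(\|U\|_{2,0}+\|V\|_{2,0})\ge\lambda(\|\overline{U}\|_{2,0}+\|\overline{V}\|_{2,0})$ to the $F_\mu$ inequality to conclude $\Phi_{\lambda,\mu}(U,V)\ge\Phi_{\lambda,\mu}(\overline{U},\overline{V})$. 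The strong case is identical: the quadratic lower bound on $F_\mu$ survives because the $\ell_{2,0}$ difference is nonnegative.

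For the converse, I would first note that a local minimizer of $\Phi_{\lambda,\mu}$ satisfies $0\in\widehat{\partial}\Phi_{\lambda,\mu}(\overline{U},\overline{V})$ by Remark \ref{remark-Fsubdiff}(b), hence lies in ${\rm crit}\,\Phi_{\lambda,\mu}$ by the regularity established in Lemma \ref{subdiff-Phi}; Proposition \ref{prop1-Phi} then forces $J_{\overline{U}}=J_{\overline{V}}=:J$, so every column of both $\overline{U}_{\!J}$ and $\overline{V}_{\!J}$ is nonzero. The key construction is to embed: given any $(U_J,V_J)$ near $(\overline{U}_{\!J},\overline{V}_{\!J})$ in $\mathbb{R}^{n\times|J|}\times\mathbb{R}^{m\times|J|}$, I form full matrices $(U,V)$ carrying these columns on $J$ and zero outside $J$. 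Because the zero columns match those of $(\overline{U},\overline{V})$, one has $UV^{\mathbb{T}}=U_JV_J^{\mathbb{T}}$, $\|U\|_F=\|U_J\|_F$, $\|V\|_F=\|V_J\|_F$, and $\|(U,V)-(\overline{U},\overline{V})\|_F=\|(U_J,V_J)-(\overline{U}_{\!J},\overline{V}_{\!J})\|_F$; in particular $(U,V)\in\mathbb{B}_{\delta}(\overline{U},\overline{V})$ whenever $(U_J,V_J)$ is close enough.

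Finally, shrinking the neighborhood so that the nonzero columns of $\overline{U}_{\!J}$ and $\overline{V}_{\!J}$ remain nonzero, the embedded matrices satisfy $\|U\|_{2,0}=\|V\|_{2,0}=|J|=\|\overline{U}\|_{2,0}=\|\overline{V}\|_{2,0}$, so the nonsmooth term contributes the same constant $2\lambda|J|$ at $(U,V)$ and at $(\overline{U},\overline{V})$. Subtracting this constant from $\Phi_{\lambda,\mu}(U,V)\ge\Phi_{\lambda,\mu}(\overline{U},\overline{V})$ leaves exactly $F_\mu(U_J,V_J)\ge F_\mu(\overline{U}_{\!J},\overline{V}_{\!J})$ on the reduced space, and in the strong case the quadratic term transfers verbatim via the distance identity above. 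The main obstacle, and the reason Proposition \ref{prop1-Phi} is indispensable, is precisely this cancellation: only because $J_{\overline{U}}=J_{\overline{V}}$ are the two $\ell_{2,0}$ counts locally constant under the embedding, for if the nonzero patterns of $\overline{U}$ and $\overline{V}$ disagreed on $J$, perturbing a column that is zero in one factor could alter its $\ell_{2,0}$ count and break the clean reduction.
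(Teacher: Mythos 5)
Your argument is correct and follows essentially the same route as the paper's proof: the forward direction uses lower semicontinuity of the column $\ell_{2,0}$-norm to add a nonnegative term to the $F_{\mu}$ inequality, and the converse uses Proposition \ref{prop1-Phi} to get $J_{\overline{U}}=J_{\overline{V}}$, embeds perturbations of $(\overline{U}_{\!J},\overline{V}_{\!J})$ by zero-padding, and cancels the locally constant $2\lambda|J|$ term. The only cosmetic difference is that you spell out why a local minimizer of $\Phi_{\lambda,\mu}$ is a critical point before invoking Proposition \ref{prop1-Phi}, a step the paper takes for granted.
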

 \begin{proof}
  Let $(\overline{U},\overline{V})$ be a strong local minimizer of $F_{\mu}$.
  There exist $\alpha>0$ and $\delta>0$ such that \eqref{strong-min}
  holds for all $(U,V)\in\mathbb{B}_{\delta}(\overline{U},\overline{V})$.
  Clearly, there exists $\delta'>0$ such that for all $(U,V)\in\mathbb{B}_{\delta'}(\overline{U},\overline{V})$,
  $\|U\|_{2,0}\geq\|\overline{U}\|_{2,0}$ and $\|V\|_{2,0}\geq\|\overline{V}\|_{2,0}$.
  Then, for any $(U,V)\in\mathbb{B}_{\varepsilon}(\overline{U},\overline{V})$
  with $\varepsilon=\min(\delta,\delta')$,
  \begin{align*}
  \Phi_{\lambda,\mu}(U,V)&=F_{\mu}(U,V)+\lambda(\|U\|_{2,0}+\|V\|_{2,0})\\
  &\ge F_{\mu}(\overline{U},\overline{V})+\lambda(\|\overline{U}\|_{2,0}+\|\overline{V}\|_{2,0})
       +\alpha\|(U,V)-(\overline{U},\overline{V})\|_F^2\\
  & =\Phi_{\lambda,\mu}(\overline{U},\overline{V})+\alpha\|(U,V)-(\overline{U},\overline{V})\|_F^2.
  \end{align*}
  This shows that $(\overline{U},\overline{V})$ is a strong local minimizer of $\Phi_{\lambda,\mu}$.
  Now let $(\overline{U},\overline{V})$ be a nonzero strong local minimizer of $\Phi_{\lambda,\mu}$.
  Clearly, $\overline{U}\ne 0$ and $\overline{V}\ne 0$.
  By Proposition \ref{prop1-Phi}, $J_{\overline{U}}=J_{\overline{V}}=J$.
  Also, there exist $\alpha>0$ and $\varepsilon>0$ such that for all
  $(U,V)\in\mathbb{B}_{\varepsilon}(\overline{U},\overline{V})$,
  \begin{align*}
   \Phi_{\lambda,\mu}(U,V)
   &\ge \Phi_{\lambda,\mu}(\overline{U},\overline{V})+\alpha\|(U,V)-(\overline{U},\overline{V})\|_F^2\\
   &= f(\overline{U}_{\!J}\overline{V}_{\!J}^\mathbb{T}\!)
     +\frac{\mu}{2}\big(\|\overline{U}_{\!J}\|_F^2+\|\overline{V}_{\!J}\|_F^2\big)+2\lambda|J|
     +\alpha\|(U,V)-(\overline{U},\overline{V})\|_F^2.
  \end{align*}
  In addition, there exists $\varepsilon'>0$ such that $\|A'\|_{2,0}=\|B'\|_{2,0}=|J|$
  for all $(A',B')\in\mathbb{B}_{\varepsilon'}(\overline{U}_{\!J},\overline{V}_{\!J})$.
  Pick any $(A,B)\in\mathbb{B}_{\widehat{\varepsilon}}(\overline{U}_{\!J},\overline{V}_{\!J})$
  with $\widehat{\varepsilon}=\min(\varepsilon',\varepsilon)$.
  Let $(U,V)\in\mathbb{R}^{n\times r}\times\mathbb{R}^{m\times r}$ with
  $U_{\!J}=A,U_{\!\overline{J}}=0$ and $V_{J}=B,V_{\!\overline{J}}=0$.
  Together with the last inequality, it follows that
  \begin{align*}
   F_{\mu}(A,B)&=f(AB^\mathbb{T}\!)+\frac{\mu}{2}\big(\|A\|_F^2+\|B\|_F^2\big)
    =f(UV^\mathbb{T}\!)+\frac{\mu}{2}\big(\|U\|_F^2+\|V\|_F^2\big)\\
   &=\Phi_{\lambda,\mu}(U,V)-2\lambda|J|\\
   &\ge f(\overline{U}_{\!J}\overline{V}_{\!J}^\mathbb{T}\!)
   +\frac{\mu}{2}\big(\|\overline{U}_{\!J}\|_F^2+\|\overline{V}_{\!J}\|_F^2\big)
    +\alpha\|(U,V)-(\overline{U},\overline{V})\|_F^2\\
   &=F_{\mu}(\overline{U}_{\!J},\overline{V}_{\!J})+\alpha\|(A,B)-(\overline{U}_{\!J},\overline{V}_{\!J})\|_F^2.
  \end{align*}
  This shows that $(\overline{U}_{\!J},\overline{V}_{\!J})$ is
  a strong local minimizer of $F_{\mu}$ defined on $\mathbb{R}^{n\times |J|}\times\mathbb{R}^{m\times |J|}$.
  The above arguments with $\alpha=0$ yield the conclusion on the local minimizer
  of $F_{\mu}$ and $\Phi_{\lambda,\mu}$.
 \end{proof}

 \begin{remark}\label{remark2.1}
  Let $(\overline{U},\overline{V})$ be a local minimizer of $\Phi_{\lambda,\mu}$ with
  ${\rm rank}(\overline{U}\overline{V}^{\mathbb{T}})=r^*$.
  From the second part of Proposition \ref{prop2-Phi} and \cite[Theorem 3.1]{TaoPanBi19},
  we deduce that $\overline{U}\overline{V}^{\mathbb{T}}$ has an error bound to $M^*$
  as in \eqref{error-bound} whenever $f$ satisfies the assumption of \cite[Theorem 3.1]{TaoPanBi19}.
  Similarly, if $(\widehat{U},\widehat{V})$ is a local minimizer of $F_{\lambda}$
  with ${\rm rank}(\widehat{U}\widehat{V}^{\mathbb{T}})\le r^*$ and $f$ satisfies the assumption
  of \cite[Theorem 3.1]{TaoPanBi19}, then
  \[
    \|\widehat{U}\widehat{V}^{\mathbb{T}}\!-M^*\|_F\leq c'\sqrt{r^*(\lambda^2+\|\nabla f(M^*)\|^2)}
    \ \ {\rm for\ some}\ c'>0.
  \]
  As discussed in Appendix A, only a suitably large $\lambda$ is enough to ensure that
  ${\rm rank}(\widehat{U}\widehat{V}^{\mathbb{T}})\leq r^*$. Thus,
  when $f$ satisfies the assumption of \cite[Theorem 3.1]{TaoPanBi19},
  the solution $\overline{U}\overline{V}^{\mathbb{T}}$ associated to a local minimizer
  $(\overline{U},\overline{V})$ of model \eqref{MS-FL20} with ${\rm rank}(\overline{U})=r^*$
  has a better error bound to the true $M^*$ than the solution $\widehat{U}\widehat{V}^{\mathbb{T}}$
  associated to a local minimizer $(\widehat{U},\widehat{V})$ of model \eqref{MC-Fnorm}
  with ${\rm rank}(\widehat{U})=r^*$ does.
 \end{remark}
 \section{An alternating MM method with extrapolation}\label{sec3}

  Fix any $(U,V)\in\mathbb{R}^{n\times r}\times\mathbb{R}^{m\times r}$.
  Since $f$ is smooth and its gradient $\nabla\!f$ is Lipschitz with modulus $L_{\!f}$,
  the function $F(\cdot,V)$ is smooth and $\nabla_{\!1}F(\cdot,V)$ is Lipschitz continuous
  with modulus $\tau_{\!V}\!:=L_{\!f}\|V\|^2$. By the decent lemma, for any $U'\in\mathbb{R}^{n\times r}$
  and $\gamma\ge\tau_{\!V}$ it holds that
  \begin{subnumcases}{}\label{FU}
   F(U',V)\le F(U,V)+\langle\nabla_{\!1}F(U,V),U'\!-\!U\rangle
    +\frac{\gamma}{2}\|U'\!-\!U\|_F^2,\\
  -F(U',V)\le -F(U,V)-\langle\nabla_{\!1}F(U,V),U'\!-\!U\rangle
    +\frac{\gamma}{2}\|U'\!-\!U\|_F^2.
  \label{nFU}
  \end{subnumcases}
  Similarly, since $F(U,\cdot)$ is a smooth function and its gradient
  $\nabla_{\!2}F(U,\cdot)$ is Lipschitz continuous with modulus
  $\tau_{U}\!:=L_{\!f}\|U\|^2$, for any $V'\in\mathbb{R}^{m\times r}$
  and $\gamma\ge\tau_{U}$ it holds that
  \begin{subnumcases}{}\label{FV}
   F(U,V')\le F(U,V)+\langle\nabla_{\!2}F(U,V),V'\!-\!V\rangle
    +\frac{\gamma}{2}\|V'\!-\!V\|_F^2,\\
   -F(U,V')\le -F(U,V)-\langle\nabla_{\!2}F(U,V),V'\!-\!V\rangle
    +\frac{\gamma}{2}\|V'\!-\!V\|_F^2.
   \label{nFV}
  \end{subnumcases}
  From inequalities \eqref{FU} and \eqref{FV}, and the expression of $\Phi_{\lambda,\mu}$, it follows that
 \begin{align*}
  \Phi_{\lambda,\mu}(U',V)
  &\le F_{U,\gamma}(U';U,V):=\langle\nabla_{\!1}F(U,V),U'\rangle+\frac{\gamma}{2}\|U'\!-\!U\|_F^2
       +\frac{\mu}{2}\|U'\|_F^2+\lambda\|U'\|_{2,0}\\
  &\qquad\qquad\qquad\qquad\quad +F(U,V)-\langle\nabla_{\!1}F(U,V),U\rangle
   +\frac{\mu}{2}\|V\|_F^2+\lambda\|V\|_{2,0},\\
  \Phi_{\lambda,\mu}(U,V')
  &\le F_{V,\gamma}(V';U,V):=\langle\nabla_{\!2}F(U,V),V'\rangle+\frac{\gamma}{2}\|V'\!-\!V\|_F^2
       +\frac{\mu}{2}\|V'\|_F^2+\lambda\|V'\|_{2,0}\\
  &\qquad\qquad\qquad\qquad\quad +F(U,V)-\langle\nabla_{\!2}F(U,V),V\rangle
   +\frac{\mu}{2}\|U\|_F^2+\lambda\|U\|_{2,0},
 \end{align*}
 which become equalities when $U'=U$ and $V'=V$. Hence, $F_{U,\gamma}(\cdot;U,V)$
 and $F_{V,\gamma}(\cdot;U,V)$ are respectively a majorization of $\Phi_{\lambda,\mu}(\cdot,V)$
 at $U$ and $\Phi_{\lambda,\mu}(U,\cdot)$ at $V$. Inspired by this, we propose
 an AMM method with extrapolation by minimizing such two majorizations in each iterate.
 \begin{algorithm}[H]
  \caption{\label{AMM}{\bf (AMM method for solving \eqref{MS-FL20})}}
  \textbf{Initialization:}
  Choose a starting point $(U^0,V^0)\in\mathbb{R}^{n\times r}\times\mathbb{R}^{m\times r}$.
  Select $\beta\in[0,1]$ and $\beta_0\in[0,\beta]$, $0<\alpha_1\le \alpha_2$.
  Let $(U^{-1},V^{-1})\!:=(U^0,V^0)$ and set $k:=0$.\\
 \textbf{while} the stopping conditions are not satisfied \textbf{do}
  \begin{itemize}
   \item[\bf 1.] Select $\gamma_{1,k}\in\tau_{\!V^{k}}+{[}\alpha_1,\alpha_2{]}$.
                 Let $\widetilde{U}^k\!:=U^k+\beta_k(U^k-U^{k-1})$ and compute
                 \begin{equation}\label{Uk-subprob}
                   U^{k+1}\in\mathop{\arg\min}_{U\in\mathbb{R}^{n\times r}}
                   \Big\{\langle\nabla_{\!1}F(\widetilde{U}^k,V^k),U\rangle
                   +\frac{\gamma_{1,k}}{2}\|U\!-\!\widetilde{U}^k\|_F^2 +\frac{\mu}{2}\|U\|_F^2+\lambda\|U\|_{2,0}\Big\}.
                 \end{equation}

 \item[\bf 2.]  Select $\gamma_{2,k}\in\tau_{\!U^{k+1}}+{[}\alpha_1,\alpha_2{]}$.
                Let $\widetilde{V}^k\!:=V^k+\beta_k(V^k-V^{k-1})$ and compute
                \begin{equation}\label{Vk-subprob}
                 V^{k+1}\!\in\mathop{\arg\min}_{\!V\in\mathbb{R}^{m\times r}}
                 \Big\{\langle\nabla_{\!2}F(U^{k+1},\widetilde{V}^k),V\rangle
                  +\frac{\gamma_{2,k}}{2}\|V\!-\!\widetilde{V}^k\|_F^2+\frac{\mu}{2}\|V\|_F^2 +\lambda\|V\|_{2,0}\Big\}.
                \end{equation}

 \item[\bf 3.] Update $\beta_k$ by $\beta_{k+1}\in[0,\beta]$ and let $k\leftarrow k+1$.
 \end{itemize}
 \textbf{end while}
 \end{algorithm}
 \begin{remark}\label{remark1-AMM}
  {\bf(a)} Algorithm \ref{AMM} is a special case of the iPALM in \cite{Pock16}
  with $\alpha_i^k\!=\!\beta_i^k$ for $i=1,2$, but as will be shown below
  our global convergence analysis is different from that of \cite{Pock16} since,
  the boundedness of the generated sequence $\{(U^k,V^k)\}$ is directly achieved
  under a mild restriction on $\beta$ by leveraging the structure of $F$,
  and moreover, a quantification on $\beta$ is also provided.

  \noindent
  {\bf(b)} Let $G^k=\frac{1}{\mu+\gamma_{1,k}}(\gamma_{1,k}\widetilde{U}^k\!-\!\nabla_{\!1}F(\widetilde{U}^k,V^k))$
  and $H^k=\frac{1}{\mu+\gamma_{2,k}}(\gamma_{2,k}\widetilde{V}^k\!-\!\nabla_{\!2}F(U^{k+1},\widetilde{V}^k))$.
  By the expression of $F$, the columns of $U^{k+1}$ and $V^{k+1}$ have the following closed form:
  \begin{align*}
    U_i^{k+1}&={\rm sign}\Big[\max\Big(0,\|G_i^{k}\|\!-\!\sqrt{2(\mu\!+\!\gamma_{1,k})^{-1}\lambda}\Big)\Big]G_i^{k}
    \ \ {\rm for}\ i=1,\ldots,r;\\
    V_i^{k+1}&={\rm sign}\Big[\max\Big(0,\|H_i^{k}\|\!-\!\sqrt{2(\mu\!+\!\gamma_{2,k})^{-1}\lambda}\Big)\Big]H_i^{k}
    \ \ {\rm for}\ i=1,\ldots,r.
  \end{align*}
  Consequently, we deduce that each step of Algorithm \ref{AMM} involves about $4mnr$ flops.
%
%
 \end{remark}

 Next we shall establish the global convergence of Algorithm \ref{AMM} by following
 the analysis recipe of algorithms for nonconvex nonsmooth problems
 in the KL framework (see \cite{Attouch10,Botle14,LiuPong191,Pock16}). Define
 \begin{equation}\label{alphak}
  \alpha_{1,k}\!:=\gamma_{1,k}-\tau_{V^{k}}\ \ {\rm and}\ \
  \alpha_{2,k}\!:=\gamma_{2,k}-\tau_{U^{k+1}}\ \ {\rm for\ each}\ k\in\mathbb{N}.
 \end{equation}
 The following proposition characterizes an important property of
 the sequence $\{(U^k,V^k)\}_{k\in\mathbb{N}}$, whose proof is included in
 Appendix B.
 \begin{proposition}\label{prop1-UVk}
  Let $\{(U^k,V^k)\}_{k\in\mathbb{N}}$ be the sequence generated by Algorithm \ref{AMM}.
  Then, for any given $\rho_1\in(0,1)$ and $\rho_2\in(0,1)$, the following inequality
  holds for each $k\in\mathbb{N}$:
  \begin{align}\label{descent-ineq1}
   &\Big[\Phi_{\lambda,\mu}(U^{k+1},V^{k+1})+\frac{\rho_1\alpha_{1,k}}{2}\|U^{k+1}-U^k\|_F^2
     +\frac{\rho_2\alpha_{2,k}}{2}\|V^{k+1}-V^k\|_F^2\Big]\nonumber\\
   &-\Big[\Phi_{\lambda,\mu}(U^{k},V^{k})+\frac{\rho_1\alpha_{1,k}}{2}\|U^{k}-U^{k-1}\|_F^2
     +\frac{\rho_2\alpha_{2,k}}{2}\|V^{k}-V^{k-1}\|_F^2\Big]\nonumber\\
   &\le-\Big[\frac{\rho_1\alpha_{1,k}}{2}-\frac{(2(1\!-\!\rho_1)\tau_{V^{k}}+\alpha_{1,k})\beta_k^2}{2(1\!-\!\rho_1)}\Big]
     \big\|U^{k}\!-\!U^{k-1}\big\|_F^2\nonumber\\
   &\quad-\Big[\frac{\rho_2\alpha_{2,k}}{2}-\frac{(2(1\!-\!\rho_2)\tau_{U^{k+1}}+\alpha_{2,k})\beta_k^2}{2(1\!-\!\rho_2)}\Big]
     \big\|V^{k}\!-\!V^{k-1}\big\|_F^2.
  \end{align}
  Consequently, $(U^k,V^k)\in\mathcal{L}_{\lambda,\mu}\!:=\big\{(U,V)\in\mathbb{R}^{n\times r}\times\mathbb{R}^{m\times r}\,|\,
 \Phi_{\lambda,\mu}(U,V)\le\Phi_{\lambda,\mu}(U^0,V^0)\big\}$ whenever
  $\beta_k\!\in[0,\min(\overline{\beta}_{1,k},\overline{\beta}_{2,k})]$ with
  \(
    \overline{\beta}_{1,k}\!:=\!\sqrt{\frac{\rho_1(1-\rho_1)(\gamma_{1,k}-\tau_{\!V^{k}})}
    {2(1-\rho_1)\tau_{\!V^{k}}+(\gamma_{1,k}-\tau_{\!V^{k}})}}
  \)
  and
  \(
   \overline{\beta}_{2,k}\!:=\!\sqrt{\frac{\rho_2(1-\rho_2)(\gamma_{2,k}-\tau_{\!U^{k+1}})}
   {2(1-\rho_2)\tau_{\!U^{k+1}}+(\gamma_{2,k}-\tau_{\!U^{k+1}})}}.
  \)
 \end{proposition}

 \begin{remark}\label{remark-alpha}
 {\bf(a)} Write $\overline{\beta}:=\inf_{k\in\mathbb{N}}\min(\overline{\beta}_{1,k},\overline{\beta}_{2,k})$.
 Clearly, $\overline{\beta}$ is well defined. By the second part of Proposition \ref{prop1-UVk},
 if $\beta$ is chosen from the interval $[0,\overline{\beta}]$,
 then $\tau\!:=\sup_{k\in\mathbb{N}}\max(\tau_{U^k},\tau_{V^k})<\infty$
 is well defined. We see that, when taking $\gamma_{1,k}=\eta_1\tau_{\!V^{k}}$
 and $\gamma_{2,k}=\eta_2\tau_{\!U^{k+1}}$ for $\eta_1=2$ and $\eta_2=2$,
 the value of $\overline{\beta}$ equals
 \(
   \min\Big(\!\sqrt{\frac{\rho_1(1-\rho_1)}
   {2(1-\rho_1)+1}},\sqrt{\frac{\rho_2(1-\rho_2)}{2(1-\rho_2)+1}}\Big),
  \)
  whose maximum is close to $0.366$.

  \noindent
  {\bf(b)} When $f$ is convex, $\frac{\tau_{\!V^{k}}}{2}\|U^{k}\!-\!\widetilde{U}^k\|_F^2$
  in \eqref{convex-monotone1} and $\frac{\tau_{U^{k+1}}}{2}\|V^{k}\!-\!\widetilde{V}^k\|_F^2$
  in \eqref{FV-ineq1} do not appear. Now Proposition \ref{prop1-UVk} holds with
  \(
    \overline{\beta}_{1,k}\!:=\!\sqrt{\frac{\rho_1(1-\rho_1)(\gamma_{1,k}-\tau_{\!V^{k}})}
    {(1-\rho_1)\tau_{\!V^{k}}+(\gamma_{1,k}-\tau_{V^{k}})}}
  \)
  and
  \(
   \overline{\beta}_{2,k}\!:=\!\sqrt{\frac{\rho_2(1-\rho_2)(\gamma_{2,k}-\tau_{\!U^{k+1}})}
   {(1-\rho_2)\tau_{\!U^{k+1}}+(\gamma_{2,k}-\tau_{\!U^{k+1}})}}.
  \)
 \end{remark}

 To achieve the global convergence of Algorithm \ref{AMM},
  with some $\rho_1,\rho_2\in(0,\frac{\alpha_1}{2\alpha_2})$ we define
 \begin{equation}
  \Xi_{\lambda,\mu}(U,V,U',V')
  :=\Phi_{\lambda,\mu}(U,V)+\frac{\rho_1\alpha_2}{2}\|U-U'\|_F^2+\frac{\rho_2\alpha_2}{2}\|V-V'\|_F^2.
 \end{equation}
 The following proposition characterizes the properties of the potential function $\Xi_{\lambda,\mu}$
 on the sequence $\{(U^k,V^k,U^{k-1},V^{k-1})\}_{k\in\mathbb{N}}$, whose proof is included in Appendix C.
 \begin{proposition}\label{prop2-UVk}
  Let $\{(U^k,V^k)\}_{k\in\mathbb{N}}$ be the sequence generated by Algorithm \ref{AMM}
  with $\beta\in[0,\overline{\beta}]$, where $\overline{\beta}$ is the constant
  defined in Remark \ref{remark-alpha} (a). Then, the following statements hold.
  \begin{itemize}
   \item [(i)] With $\nu_{1,k}=\frac{(\alpha_{1,k}-\rho_1\alpha_2)(\rho_1\alpha_2-2\tau_{\!V^{k}}\beta_k^2)-\alpha_{1,k}^2\beta_k^2}
               {2(\alpha_{1,k}-\rho_1\alpha_2)}$ and $\nu_{2,k}=\frac{(\alpha_{2,k}-\rho_2\alpha_2)(\rho_2\alpha_2-2\tau_{\!U^{k+1}}\beta_k^2)-\alpha_{2,k}^2\beta_k^2}
               {2(\alpha_{2,k}-\rho_2\alpha_2)}$,
               \begin{align*}
                &\Xi_{\lambda,\mu}(U^{k+1},V^{k+1},U^k,V^k)
                  -\Xi_{\lambda,\mu}(U^k,V^k,U^{k-1},V^{k-1})\\
                 &\le -\nu_{1,k}\|U^{k}-U^{k-1}\|_F^2-\nu_{2,k}\|V^k-V^{k-1}\|_F^2
                 \quad\ {\rm for\ each}\ k\in\mathbb{N}.
                \end{align*}

   \item [(ii)] The sequence $\{(U^k,V^k)\}_{k\in\mathbb{N}}$ is bounded.
                Therefore, the set of accumulation points of the sequence
                $\{(U^{k},V^{k},U^{k-1},V^{k-1})\}_{k\in\mathbb{N}}$, denoted by $\Upsilon$,
                is nonempty and compact.

   \item[(iii)] If $\beta\in[0,\min(\overline{\beta},\widetilde{\beta})]$ with $0\leq\widetilde{\beta}<\min\Big(\sqrt{\frac{\rho_1(1-\rho_1)\alpha_2}{2(1-\rho_1)\tau+\alpha_2}},
                \sqrt{\frac{\rho_2(1-\rho_2)\alpha_2}{2(1-\rho_2)\tau+\alpha_2}}\Big)$ for
                $\tau$ in Remark \ref{remark-alpha} (a),
                then $\{\Xi_{\lambda,\mu}(U^{k}\!,V^{k}\!,U^{k-1}\!\!,V^{k-1}\!)\}_{k\in\mathbb{N}}$
                has a limit as $k\!\to\!\infty$, say $\varpi^*$, and $\Xi_{\lambda,\mu}\!\equiv\!\varpi^*$
                on $\Upsilon$.

   \item [(iv)] If $\beta\in[0,\min(\overline{\beta},\widetilde{\beta})]$ where $\widetilde{\beta}$
                is same as in part (iii), then for each $k\in\mathbb{N}$ it holds that
               \begin{align*}
                {\rm dist}\big(0,\partial\Xi_{\lambda,\mu}(U^{k+1},V^{k+1},U^k,V^k)\big)
                &\leq c_1\big(\|U^{k+1}\!-\!U^{k}\!\|_F\!+\|U^k-U^{k-1}\|_F\big)\\
                &\quad+c_2\big(\|V^{k+1}\!-\!V^{k}\!\|_F^2\!+\|V^k-V^{k-1}\|_F\big)
               \end{align*}
               for $c_1\!=\tau+\overline{\gamma}+2\rho_1\alpha_2$ and $
               c_2\!=c_f+2\tau+\overline{\gamma}+2\rho_2\alpha_2$ with
               $c_f\!=\sup_{k\in\mathbb{N}}\{\|\nabla\!f(U^k(V^k)^{\mathbb{T}})\|\}$.
 \end{itemize}
 \end{proposition}

 \begin{remark}\label{remark2-AMM}
  By Remark \ref{remark-alpha} (b), the constants $\overline{\beta}$ and
  $\widetilde{\beta}$ in Proposition \ref{prop2-UVk} can be improved when $f$ is convex.
  From equation \eqref{gradXi-UV1} and Proposition \ref{subdiff-Phi},
  whenever $(\overline{U},\overline{V},\overline{U},\overline{V})
  \in{\rm crit}\,\Xi_{\lambda,\mu}$, we have $(\overline{U},\overline{V})\in
  {\rm crit}\Phi_{\lambda,\mu}$. Along with Proposition \ref{prop2-UVk} (iv),
  if the sequence $\{(U^k,V^k)\}_{k\in\mathbb{N}}$ generated by Algorithm \ref{AMM}
  with $\beta\in[0,\min(\overline{\beta},\widetilde{\beta})]$ is convergent,
  then its limit is a critical point of $\Phi_{\lambda,\mu}$.
 \end{remark}

  Since the zero-norm and the function $\theta(Z)\!:=(\|Z_1\|,\ldots,\|Z_{r}\|)$
  for $Z\in\mathbb{R}^{n\times r}$ are semialgebraic, the column $\ell_{2,0}$-norm,
  as a composition of $\theta$ and the zero-norm, is semialgebraic.
  This means that $\Xi_{\lambda,\mu}$ is a KL function (see \cite[Section 4]{Attouch10}).
  By Proposition \ref{prop2-UVk} and Remark \ref{remark2-AMM},
  using the same arguments as those for \cite[Theorem 3.2]{Attouch10}
  or \cite[Theorem 3.1]{LiuPong191} yields the following result.
 \begin{theorem}\label{theorem-AMM}
  Let $\{(U^k,V^k)\}_{k\in\mathbb{N}}$ be the sequence given by Algorithm \ref{AMM}
  with $\beta\in[0,\min(\overline{\beta},\widetilde{\beta})]$
  for solving problem \eqref{MS-FL20} associated to $\lambda$ and $\mu$.
  Then, the sequence $\{(U^k,V^k)\}_{k\in\mathbb{N}}$ is convergent
  and its limit, say $(U^*,V^*)$, is a critical point of $\Phi_{\lambda,\mu}$,
  which by Proposition \ref{prop2-Phi} is also a local optimizer of
  problem \eqref{MS-FL20} if $(U^*,V^*)$ is a local minimizer of $F_{\mu}$.
 \end{theorem}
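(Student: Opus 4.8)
The plan is to assemble the four ingredients already in hand---the sufficient decrease of the potential $\Xi_{\lambda,\mu}$ (Proposition \ref{prop2-UVk}(i)), the boundedness and resulting nonempty compact accumulation set $\Upsilon$ (Proposition \ref{prop2-UVk}(ii)), the constancy $\Xi_{\lambda,\mu}\equiv\varpi^*$ on $\Upsilon$ together with the value-continuity along subsequences (Proposition \ref{prop2-UVk}(iii)), and the subgradient bound (Proposition \ref{prop2-UVk}(iv))---into the standard Kurdyka--{\L}ojasiewicz convergence recipe of \cite{Attouch10,LiuPong191}, using that $\Xi_{\lambda,\mu}$ is semialgebraic and hence a KL function. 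A preliminary observation is that, under the restriction $\beta\le\min(\overline{\beta},\widetilde{\beta})$, the choice of $\widetilde{\beta}$ guarantees $\liminf_{k\to\infty}\nu_{1,k}>0$ and $\liminf_{k\to\infty}\nu_{2,k}>0$, so that Proposition \ref{prop2-UVk}(i) is a genuine descent inequality with coefficients bounded away from zero for all large $k$.

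First I would establish the finite-length property. Summing the descent inequality of Proposition \ref{prop2-UVk}(i) over $k$ and using that $\Xi_{\lambda,\mu}$ is bounded below (indeed nonnegative) yields $\sum_{k}\big(\|U^k-U^{k-1}\|_F^2+\|V^k-V^{k-1}\|_F^2\big)<\infty$; in particular $\|U^{k+1}-U^k\|_F\to0$ and $\|V^{k+1}-V^k\|_F\to0$. To upgrade this square-summability to the first-order summability $\sum_k\|(U^{k+1},V^{k+1})-(U^k,V^k)\|_F<\infty$, I would invoke the uniformized KL inequality on a neighborhood of the compact set $\Upsilon$, on which $\Xi_{\lambda,\mu}$ is constant. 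Combining the concave desingularizing function supplied by the KL property with the descent inequality of Proposition \ref{prop2-UVk}(i) and the subgradient bound of Proposition \ref{prop2-UVk}(iv) in the usual telescoping manner produces the first-order summability, so that $\{(U^k,V^k)\}_{k\in\mathbb{N}_0}$ is Cauchy and converges to some $(U^*,V^*)$.

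Next I would identify the limit. Since the step differences tend to zero, Proposition \ref{prop2-UVk}(iv) gives ${\rm dist}\big(0,\partial\Xi_{\lambda,\mu}(U^{k+1},V^{k+1},U^k,V^k)\big)\to0$. Because $(U^{k+1},V^{k+1},U^k,V^k)\to(U^*,V^*,U^*,V^*)$ and, by the value-continuity in Proposition \ref{prop2-UVk}(iii), the potential values converge to $\Xi_{\lambda,\mu}(U^*,V^*,U^*,V^*)$, the closedness of the graph of $\partial\Xi_{\lambda,\mu}$ under value convergence (Remark \ref{remark-Fsubdiff}(a), applicable since $\Xi_{\lambda,\mu}$ is regular) yields $0\in\partial\Xi_{\lambda,\mu}(U^*,V^*,U^*,V^*)$. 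By Remark \ref{remark2-AMM} this forces $(U^*,V^*)\in{\rm crit}\,\Phi_{\lambda,\mu}$. The closing assertion is then immediate: if $(U^*,V^*)$ is a local minimizer of $F_{\mu}$, Proposition \ref{prop2-Phi} shows it is a local minimizer of $\Phi_{\lambda,\mu}$, i.e.\ a local optimizer of \eqref{MS-FL20}.

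The hard part will be the value-continuity step, needed both to run the KL inequality along the sequence and to close the subdifferential graph: the column $\ell_{2,0}$-norm is only lower semicontinuous and is discontinuous exactly where columns vanish, so one cannot naively pass to the limit in $\Phi_{\lambda,\mu}(U^k,V^k)$. This is precisely why Proposition \ref{prop2-UVk}(iii) first proves $\|U^k\|_{2,0}\to\|U^*\|_{2,0}$ and $\|V^k\|_{2,0}\to\|V^*\|_{2,0}$ along accumulation subsequences, by exploiting the optimality of the subproblems \eqref{Uk-subprob}--\eqref{Vk-subprob}; I would lean on that result rather than attempt a direct continuity argument. A secondary technical point is verifying that the desingularizing function and neighborhood furnished by the KL property can be chosen uniform over $\Upsilon$, which follows from the compactness of $\Upsilon$ together with the constancy of $\Xi_{\lambda,\mu}$ there.
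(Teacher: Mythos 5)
Your proposal is correct and follows exactly the route the paper takes: the paper's proof is a one-line appeal to Proposition \ref{prop2-UVk}, Remark \ref{remark2-AMM}, and the standard KL convergence recipe of \cite[Theorem 3.2]{Attouch10} or \cite[Theorem 3.1]{LiuPong191}, and your write-up is precisely that recipe with the four ingredients (sufficient decrease, compactness of $\Upsilon$, value convergence and constancy of $\Xi_{\lambda,\mu}$ on $\Upsilon$, and the relative-error bound) assembled via the uniformized KL property. You in fact supply more detail than the paper does, correctly flagging the lower-semicontinuity issue for $\|\cdot\|_{2,0}$ that makes the value-convergence step in Proposition \ref{prop2-UVk}(iii) the essential technical point.
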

 \section{A hybrid alternating MM method}\label{sec4}

  Algorithm \ref{AMM} is actually a majorized alternating proximal (MAP) method for
  solving \eqref{MS-FL20}. Indeed, for any $(U,V),(G,H)\in\mathbb{R}^{n\times r}\times \mathbb{R}^{m\times r}$,
  we have
 \begin{equation*}
  f(UV^\mathbb{T})\le \widehat{F}(U,V,G,H):=
  f(GH^\mathbb{T})+\langle\nabla\!f(GH^\mathbb{T}),UV^\mathbb{T}\!-\!GH^\mathbb{T}\rangle
  +\frac{L_{\!f}}{2}\|UV^\mathbb{T}\!-\!GH^\mathbb{T}\|_F^2,
 \end{equation*}
 which by the expression of $\Phi_{\lambda,\mu}$ implies that
 \begin{align*}
  \Phi_{\lambda,\mu}(U,V)\le\widehat{\Phi}_{\lambda,\mu}(U,V,G,H):=
  \widehat{F}(U,V,G,H)+\frac{\mu}{2}\big(\|U\|_F^2\!+\!\|V\|_F^2\big)+\lambda\big(\|U\|_{2,0}\!+\!\|V\|_{2,0}\big).
 \end{align*}
 This, along with $\Phi_{\lambda,\mu}(G,H)=\widehat{\Phi}_{\lambda,\mu}(G,H,G,H)$,
 means that $\widehat{\Phi}_{\lambda,\mu}(\cdot,\cdot,G,H)$ is a majorization of
 $\Phi_{\lambda,\mu}$ at $(G,H)$. Observe that the subproblems \eqref{Uk-subprob}
 and \eqref{Vk-subprob} are respectively equivalent to
 \begin{subnumcases}{}\label{EUk-subprob}
  U^{k+1}\in\mathop{\arg\min}_{U\in \mathbb{R}^{n\times r}}
         \Big\{\widehat{\Phi}_{\lambda,\mu}(U,V^k,\widetilde{U}^k,V^k)
         +\frac{1}{2}\|U-\widetilde{U}^k\|_{\mathcal{A}_{k}}^2\Big\},\\
  V^{k+1}\in\mathop{\arg\min}_{V\in \mathbb{R}^{m\times r}}
          \Big\{\widehat{\Phi}_{\lambda,\mu}({U}^{k+1},V,{U}^{k+1},\widetilde{V}^{k})
          +\frac{1}{2}\|V-\widetilde{V}^{k}\|_{\mathcal{B}_{k+1}}^2\Big\}
 \label{EVk-subprob}
 \end{subnumcases}
 where $\mathcal{A}_{k}(X)\!:=\!X(\gamma_{1,k}I\!-\!L_{\!f}(V^k)^{\mathbb{T}}V^k)$
 for $X\in\mathbb{R}^{n\times r}$
 and $\mathcal{B}_{k}(Z)\!:=\!Z(\gamma_{2,k-1}I\!-\!L_{\!f}(U^k)^{\mathbb{T}}U^k)$
 for $Z\in\mathbb{R}^{m\times r}$ are the self-adjoint positive definite linear operators,
 and the proximal terms $\frac{1}{2}\|U\!-\widetilde{U}^k\|_{\mathcal{A}_{k}}^2$
 and $\frac{1}{2}\|V\!-\widetilde{V}^{k}\|_{\mathcal{B}_{k+1}}^2$ are introduced to
 ensure that the subproblems have a closed-form solution.
 Hence, Algorithm \ref{AMM} is precisely minimizing $\widehat{\Phi}_{\lambda,\mu}(U,V,G,H)$
 in an alternating proximal way.
 Next we develop another MAP method by minimizing $\widehat{\Phi}_{\lambda,\mu}$.
 Its iterates are described as follows.
 \begin{algorithm}[h]
  \caption{\label{MAPM}{\bf (MAP method for solving \eqref{MS-FL20})}}
  \textbf{Initialization:} Select the parameters $\varrho\in(0,1),\underline{\gamma_1}>0,
  \underline{\gamma_2}>0,\gamma_{1,0}>0$ and $\gamma_{2,0}>0$.
  Choose $P^0\!\in\mathbb{O}^{m\times r},Q^0\!\in\mathbb{O}^{n\times r},D^0=I_{r}$.
  Let $\overline{U}^0=Q^0$ and $\overline{V}^0=P^0$. Set $k:=0$.\\
 \textbf{while} the stopping conditions are not satisfied \textbf{do}
  \begin{itemize}
   \item[\bf 1.] Compute
                 \(
                  U^{k+1}\in\displaystyle{\mathop{\arg\min}_{U\in\mathbb{R}^{n\times r}}}
                 \Big\{\widehat{\Phi}_{\lambda,\mu}(U,\overline{V}^{k},\overline{U}^k,\overline{V}^k)          +\frac{\gamma_{1,k}}{2}\|U-\overline{U}^k\|_F^2\Big\}.
                 \)

   \item[\bf 2.] Perform an SVD for $U^{k+1}D^k$ such that $U^{k+1}D^k=\widehat{P}^{k+1}(\widehat{D}^{k+1})^2(\widehat{Q}^{k+1})^{\mathbb{T}}$,
                 and set
                 \[
                  \widehat{U}^{k+1}:=\widehat{P}^{k+1}\widehat{D}^{k+1}\ \ {\rm and}\ \
                   \widehat{V}^{k+1}\!:=P^k\widehat{Q}^{k+1}\widehat{D}^{k+1}.
                 \]

 \item[\bf 3.]  Compute
                \(
                  V^{k+1}\in\displaystyle{\mathop{\arg\min}_{V\in \mathbb{R}^{m\times r}}}
                   \Big\{\widehat{\Phi}_{\lambda,\mu}(\widehat{U}^{k+1},V,\widehat{U}^{k+1},\widehat{V}^{k+1})
                    +\frac{\gamma_{2,k}}{2}\|V-\widehat{V}^{k+1}\|_F^2\Big\}.
                \)

 \item[\bf 4.] Perform an SVD for $V^{k+1}\widehat{D}^{k+1}$ such that
                 $V^{k+1}\widehat{D}^{k+1}=P^{k+1}(D^{k+1})^2(Q^{k+1})^{\mathbb{T}}$, and set
               \[
                 \overline{U}^{k+1}\!:=\widehat{P}^{k+1}Q^{k+1}D^{k+1}\ \ {\rm and}\ \
                 \overline{V}^{k+1}\!:=P^{k+1}D^{k+1}.
               \]

 \item[\bf 5.] Set $\gamma_{1,k+1}=\max(\underline{\gamma_1},\varrho \gamma_{1,k})$ and
               $\gamma_{2,k+1}=\max(\underline{\gamma_2},\varrho \gamma_{2,k})$.
               Let $k\leftarrow k+1$.
 \end{itemize}
 \textbf{end while}
 \end{algorithm}
 \begin{remark}\label{remark-MAPM}
  {\bf(a)} For each $k\in\mathbb{N}$, let $\overline{X}^{k+1}\!:=U^{k+1}(\overline{V}^{k})^{\mathbb{T}}\!=U^{k+1}D^k(P^k)^{\mathbb{T}}$.
  Since $P^k\in\mathbb{O}^{m\times r}$, Step 2 is actually performing an SVD
  of $\overline{X}^{k+1}$ to seek a new factor pair $(\widehat{U}^{k+1},\widehat{V}^{k+1})$
  such that the subproblem in Step 3 has a closed-form solution. As will be shown in \eqref{good-equa}
  later, $(\widehat{U}^{k+1},\widehat{V}^{k+1})$ is at least as good as $(U^{k+1},\overline{V}^{k})$
  for the function $\widehat{\Phi}_{\lambda,\mu}(\cdot,\cdot,\overline{U}^k,\overline{V}^k)$.
  Similarly, by letting $\widehat{X}^{k+1}\!:=\widehat{U}^{k+1}(V^{k+1})^{\mathbb{T}}
  =\widehat{P}^{k+1}\widehat{D}^{k+1}(V^{k+1})^{\mathbb{T}}$,
  Step 4 is performing an SVD of $\widehat{X}^{k+1}$ to seek a factor pair
  $(\overline{U}^{k+1},\overline{V}^{k+1})$ such that the subproblem in Step 1
  has a closed-form solution. To the best of our knowledge, such a technique
  appeared in the alternating least squares method of \cite{Hastie15}.

  \noindent
 {\bf(b)} For each $k\!\in\mathbb{N}$, let $G^k\!:=\!\big(L_{\!f}\widehat{Z}^kP^k\!+\!\gamma_{1,k}\widehat{P}^kQ^k\big)D^k(\Lambda^k)^{-1}$
 with $\widehat{Z}^k\!:=\!\widehat{X}^k\!-\!L_{\!f}^{-1}\nabla\!f(\widehat{X}^k)$ for $\widehat{P}^0=I$,
 and $\Lambda^k\!:=\!\big[L_{\!f}(D^k)^2\!+\!(\mu+\!\gamma_{1,k})I_r\big]^{1/2}$.
 By the expression of $\widehat{\Phi}_{\lambda,\mu}$, Step 1 is equivalent to
  \[
    U^{k+1}\in\mathop{\arg\min}_{U\in \mathbb{R}^{n\times r}}
   \Big\{\frac{1}{2}\big\|G^k-U\Lambda^k\big\|_F^2+\lambda\|U\|_{2,0}\Big\}.
  \]
  By this, it is easy to calculate that the columns of $U^{k+1}$ take the following form
  \begin{equation}\label{partU-equal1}
    U_i^{k+1}=\frac{{\rm sign}\big[\max(0,\|G_i^{k}\|\!-\!\sqrt{2\lambda})\big]}{\sigma_i(\Lambda^k)}G_i^{k}
    \ \ {\rm for}\ i=1,\ldots,r.
  \end{equation}
  Similarly, by letting $\Delta^{k+1}:=\!\big[L_{\!f}(\widehat{D}^{k+1})^2+(\mu+\gamma_{2,k})I_r\big]^{1/2}$,
  $\overline{Z}^{k+1}\!:=\overline{X}^{k+1}\!-\!L_{\!f}^{-1}\nabla\!f(\overline{X}^{k+1})$
  and $H^{k+1}\!:=\!\big(L_{\!f}(\overline{Z}^{k+1})^{\mathbb{T}}\widehat{P}^{k+1}\!+\!\gamma_{2,k}P^k\widehat{Q}^{k+1}\big)
  \widehat{D}^{k+1}(\Delta^{k+1})^{-1}$ for $k\!\in\mathbb{N}$, Step 3 is equivalent to seeking
  \[
    V^{k+1}\in\mathop{\arg\min}_{V\in \mathbb{R}^{m\times r}}
   \Big\{\frac{1}{2}\big\|H^{k+1}-V\Delta^{k+1}\big\|_F^2+\lambda\|V\|_{2,0}\Big\},
  \]
  which implies that the columns of the matrix $V^{k+1}$ take the following form
  \begin{equation}\label{partV-equal1}
   V_i^{k+1}=\frac{{\rm sign}\big[\max(0,\|H_i^{k+1}\|\!-\!\sqrt{2\lambda})\big]}{\sigma_i(\Delta^{k+1})}H_i^{k+1}
   \ \ {\rm for}\ \ i=1,2,\ldots,r.
  \end{equation}
  Thus, we deduce that each step of Algorithm \ref{MAPM} involves about $4mnr+2(m+n)r^2$ flops.
 \end{remark}

 The following proposition states the properties of the sequence
 generated by Algorithm \ref{MAPM}.
 \begin{proposition}\label{prop1-MAPM}
  Let $\big\{(U^k,V^k,\widehat{U}^{k},\widehat{V}^{k},
  \overline{U}^{k},\overline{V}^{k})\big\}_{k\in\mathbb{N}}$
  be generated by Algorithm \ref{MAPM}. Then,
  \begin{itemize}
  \item[(i)] for each $k\in\mathbb{N}$, it holds that
             \begin{align}\label{descrease-ineq1}
              {\Phi}_{\lambda,\mu}(\overline{U}^{k},\overline{V}^{k})
               &\ge {\Phi}_{\lambda,\mu}(\widehat{U}^{k+1},\widehat{V}^{k+1})
                 +\frac{\gamma_{1,k}}{2}\|U^{k+1}-\overline{U}^k\|_F^2\nonumber\\
               &\ge{\Phi}_{\lambda,\mu}(\overline{U}^{k+1},\overline{V}^{k+1})
                +\frac{\gamma_{1,k}}{2}\|U^{k+1}\!-\overline{U}^k\|_F^2
                +\frac{\gamma_{2,k}}{2}\|V^{k+1}\!-\widehat{V}^{k+1}\|_F^2,\nonumber
             \end{align}
            and hence $\{\Phi_{\lambda,\mu}(\overline{U}^k,\overline{V}^k)\}_{k\in\mathbb{N}}$
            and $\{\Phi_{\lambda,\mu}(\widehat{U}^k,\widehat{V}^k)\}_{k\in\mathbb{N}}$
            are nonincreasing and convergent;

  \item[(ii)] the sequence $\big\{(U^k,V^k,\widehat{U}^{k},\widehat{V}^{k},
               \overline{U}^{k},\overline{V}^{k})\big\}_{k\in\mathbb{N}}$ is bounded;

  \item [(iii)] there exists $\overline{k}\in\!\mathbb{N}$ such that for all $k\ge\!\overline{k}$,
               \(
                 J_{V^{k}}\!=\!J_{U^{k}}\!=\!J_{\widehat{U}^{k}}\!=\!J_{\widehat{V}^{k}}
               \!=\!J_{\overline{V}^{k}}\!=\!J_{\overline{U}^{k}}\!=\!J_{\overline{U}^{k+1}}.
               \)
  \end{itemize}
 \end{proposition}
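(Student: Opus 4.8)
The plan is to treat Algorithm \ref{MAPM} as a majorized alternating minimization scheme and to exploit three facts repeatedly: the majorization inequality $\Phi_{\lambda,\mu}(U,V)\le\widehat{\Phi}_{\lambda,\mu}(U,V,G,H)$ with equality when $(U,V)=(G,H)$; the optimality of $U^{k+1}$ and $V^{k+1}$ in Steps 1 and 3 against the feasible base points; and the fact that the two SVD re-balancing steps leave the product $UV^{\mathbb{T}}$ unchanged while optimally redistributing the Frobenius energy. Concretely, from Step 2 one checks $\widehat{U}^{k+1}(\widehat{V}^{k+1})^{\mathbb{T}}=U^{k+1}(\overline{V}^k)^{\mathbb{T}}$ and from Step 4 that $\overline{U}^{k+1}(\overline{V}^{k+1})^{\mathbb{T}}=\widehat{U}^{k+1}(V^{k+1})^{\mathbb{T}}$, so the smooth surrogate $\widehat{F}$, which depends on the factors only through their product, is invariant under both balancing steps.

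For part (i) I would establish the three links of the chain in order. The first link uses the optimality of $U^{k+1}$ against the feasible $\overline{U}^k$ together with the base-point equality $\Phi_{\lambda,\mu}(\overline{U}^k,\overline{V}^k)=\widehat{\Phi}_{\lambda,\mu}(\overline{U}^k,\overline{V}^k,\overline{U}^k,\overline{V}^k)$, yielding $\widehat{\Phi}_{\lambda,\mu}(U^{k+1},\overline{V}^k,\overline{U}^k,\overline{V}^k)+\tfrac{\gamma_{1,k}}{2}\|U^{k+1}-\overline{U}^k\|_F^2\le\Phi_{\lambda,\mu}(\overline{U}^k,\overline{V}^k)$. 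To pass to $(\widehat{U}^{k+1},\widehat{V}^{k+1})$ I invoke the majorization $\Phi_{\lambda,\mu}(\widehat{U}^{k+1},\widehat{V}^{k+1})\le\widehat{\Phi}_{\lambda,\mu}(\widehat{U}^{k+1},\widehat{V}^{k+1},\overline{U}^k,\overline{V}^k)$ and compare the latter surrogate with $\widehat{\Phi}_{\lambda,\mu}(U^{k+1},\overline{V}^k,\overline{U}^k,\overline{V}^k)$ at the common base: invariance of $\widehat{F}$ makes the smooth parts equal, while $\|\widehat{U}^{k+1}\|_F^2=\|\widehat{V}^{k+1}\|_F^2={\rm trace}((\widehat{D}^{k+1})^2)=\|U^{k+1}(\overline{V}^k)^{\mathbb{T}}\|_*$ and the variational inequality $\|AB^{\mathbb{T}}\|_*\le\tfrac12(\|A\|_F^2+\|B\|_F^2)$ control the Frobenius terms, and $\|\widehat{U}^{k+1}\|_{2,0}=\|\widehat{V}^{k+1}\|_{2,0}={\rm rank}(U^{k+1}D^k)$ together with $2\,{\rm rank}(U^{k+1}D^k)\le\|U^{k+1}\|_{2,0}+\|D^k\|_{2,0}$ controls the $\ell_{2,0}$ terms. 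The second link is symmetric, built from the optimality in Step 3 and the balancing in Step 4, and the third is just the majorization $\Phi_{\lambda,\mu}(\overline{U}^{k+1},\overline{V}^{k+1})\le\widehat{\Phi}_{\lambda,\mu}(\overline{U}^{k+1},\overline{V}^{k+1},\widehat{U}^{k+1},\widehat{V}^{k+1})$. Since $\Phi_{\lambda,\mu}\ge 0$ and $\gamma_{1,k},\gamma_{2,k}>0$, the chain shows both value sequences are nonincreasing and bounded below, hence convergent. Part (ii) follows at once: $\Phi_{\lambda,\mu}(U,V)\ge\tfrac{\mu}{2}(\|U\|_F^2+\|V\|_F^2)$ bounds $\{(\overline{U}^k,\overline{V}^k)\}$ and $\{(\widehat{U}^k,\widehat{V}^k)\}$, and the increments $\|U^{k+1}-\overline{U}^k\|_F$, $\|V^{k+1}-\widehat{V}^{k+1}\|_F$, which are square-summable because $\gamma_{1,k}\ge\underline{\gamma_1}$ and $\gamma_{2,k}\ge\underline{\gamma_2}$, then bound $\{U^k\}$ and $\{V^k\}$.

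Part (iii) is the crux. I would introduce the integers $\widehat{r}_{k+1}:={\rm rank}(U^{k+1}D^k)=\|\widehat{U}^{k+1}\|_{2,0}=\|\widehat{V}^{k+1}\|_{2,0}$ and $r_{k+1}:={\rm rank}(V^{k+1}\widehat{D}^{k+1})=\|\overline{U}^{k+1}\|_{2,0}=\|\overline{V}^{k+1}\|_{2,0}=\|D^{k+1}\|_{2,0}$, all legitimate because an ordered SVD places the nonzero singular values, hence the nonzero columns of every hatted and barred factor, in the leading block $\{1,\dots,{\rm rank}\}$. The closed forms \eqref{partU-equal1} and \eqref{partV-equal1} are decisive: $G^k$ carries the right factor $D^k$ and $H^{k+1}$ carries the right factor $\widehat{D}^{k+1}$, so a zero diagonal entry kills the corresponding column, giving $J_{U^{k+1}}\subseteq J_{D^k}=\{1,\dots,r_k\}$ and $J_{V^{k+1}}\subseteq J_{\widehat{D}^{k+1}}=\{1,\dots,\widehat{r}_{k+1}\}$. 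Consequently $\widehat{r}_{k+1}\le\|U^{k+1}\|_{2,0}\le r_k$ and $r_{k+1}\le\|V^{k+1}\|_{2,0}\le\widehat{r}_{k+1}$, so $\{r_k\}$ is a nonincreasing sequence of integers in $[0,r]$, eventually constant, equal to some $r^\sharp$ for all $k\ge\overline{k}$. For such $k$ all the inequalities collapse to equalities: $\|U^{k+1}\|_{2,0}=\|V^{k+1}\|_{2,0}=r^\sharp$, and since $J_{U^{k+1}}$ (resp. $J_{V^{k+1}}$) is a subset of $\{1,\dots,r^\sharp\}$ of cardinality $r^\sharp$, it must equal $\{1,\dots,r^\sharp\}$; the same block $\{1,\dots,r^\sharp\}$ is forced on every hatted and barred factor, so the seven index sets coincide for all large $k$.

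The main obstacle I anticipate is exactly this support bookkeeping in part (iii): one must verify carefully that the proximal thresholding in Steps 1 and 3 can only delete columns already present in the incoming diagonal factor and never create new ones, that an ordered SVD keeps supports in a leading block so that equality of cardinalities forces equality of index sets, and that the rank chain $r_{k+1}\le\widehat{r}_{k+1}\le r_k$ holds for every $k$ including the initialization $D^0=I_r$. The remaining estimates in (i)--(ii) are routine once the product-invariance of the two SVD steps and the nuclear-norm balancing inequality are in hand.
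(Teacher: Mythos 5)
Your proposal is correct and follows essentially the same route as the paper's proof: the optimality of $U^{k+1},V^{k+1}$ against the feasible base points combined with the invariance of $\widehat{F}$ under the product-preserving SVD rebalancing, the Srebro-type inequality $\|AB^{\mathbb{T}}\|_*\le\frac12(\|A\|_F^2+\|B\|_F^2)$ together with the rank characterization \eqref{rank-chara} for the Frobenius and $\ell_{2,0}$ terms in part (i), coercivity plus bounded increments for part (ii), and the support inclusions from the closed forms \eqref{partU-equal1}--\eqref{partV-equal1} driving the nonincreasing integer sequence $\{\|\overline{U}^k\|_{2,0}\}$ to eventual constancy in part (iii). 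Your extra bookkeeping that the ordered SVD confines all supports to a leading block is a slightly more explicit version of the paper's inclusion chain \eqref{UVk-indxset}, but it is the same argument in substance.
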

 \begin{proof}
  {\bf(i)} By using $\Phi_{\lambda,\mu}(\overline{U}^{k},\overline{V}^{k})\!=\!
  \widehat{\Phi}_{\lambda,\mu}(\overline{U}^{k},\overline{V}^{k},\overline{U}^{k},\overline{V}^{k})$
  and the definitions of $U^{k+1}$ and $ V^{k+1}$,
 \begin{subequations}
 \begin{align}\label{Fval-U}
  {\Phi}_{\lambda,\mu}(\overline{U}^{k},\overline{V}^{k})
  \ge\widehat{\Phi}_{\lambda,\mu}(U^{k+1},\overline{V}^k,\overline{U}^{k},\overline{V}^{k})
   +\frac{\gamma_{1,k}}{2}\|U^{k+1}-\overline{U}^k\|_F^2;\qquad\quad\\
   \label{Fval-V}
  {\Phi}_{\lambda,\mu}(\widehat{U}^{k+1},\widehat{V}^{k+1})
 \ge\widehat{\Phi}_{\lambda,\mu}\big(\widehat{U}^{k+1},V^{k+1},\widehat{U}^{k+1},\widehat{V}^{k+1}\big)
   +\frac{\gamma_{2,k}}{2}\|V^{k+1}-\widehat{V}^{k+1}\|_F^2.
 \end{align}
 \end{subequations}
 By Remark \ref{remark-MAPM} (a) and Step 2,
 $\overline{X}^{k+1}=U^{k+1}(\overline{V}^k)^\mathbb{T}=\widehat{U}^{k+1}(\widehat{V}^{k+1})^\mathbb{T}$,
 which implies that
 \begin{equation}\label{Fequa1}
    \widehat{F}(U^{k+1},\overline{V}^k,\overline{U}^{k},\overline{V}^{k})
  =\widehat{F}(\widehat{U}^{k+1},\widehat{V}^{k+1},\overline{U}^{k},\overline{V}^{k}).
 \end{equation}
 In addition, by the definitions of $\widehat{U}^{k+1}$ and $\widehat{V}^{k+1}$,
 equation \eqref{rank-chara} and \cite[Lemma 1]{Srebro05},
 \begin{subnumcases}{}
   \frac{1}{2}\big(\|U^{k+1}\|_F^2+\|\overline{V}^k\|_F^2\big)
   \geq\|\overline{X}^{k+1}\|_*
    = \frac{1}{2}\big(\|\widehat{U}^{k+1}\|_F^2+\|\widehat{V}^{k+1}\|_F^2\big);\nonumber\\
    \frac{1}{2}\big(\|U^{k+1}\|_{2,0}+\|\overline{V}^k\|_{2,0}\big)
    \geq{\rm rank}(\overline{X}^{k+1})
    = \frac{1}{2}\big(\|\widehat{U}^{k+1}\|_{2,0}+\|\widehat{V}^{k+1}\|_{2,0}\big).\nonumber
 \end{subnumcases}
 By combining the two inequalities with equality \eqref{Fequa1}, it is immediate to obtain that
 \begin{equation}\label{good-equa}
  \widehat{\Phi}_{\lambda,\mu}(U^{k+1},\overline{V}^k,\overline{U}^{k},\overline{V}^{k})
  \ge\widehat{\Phi}_{\lambda,\mu}(\widehat{U}^{k+1},\widehat{V}^{k+1},\overline{U}^{k},\overline{V}^{k}).
 \end{equation}
 Similarly, by Remark \ref{remark-MAPM} (a) and Step 3,
 $\widehat{X}^{k+1}=\widehat{U}^{k+1}(V^{k+1})^\mathbb{T}=\overline{U}^{k+1}(\overline{V}^{k+1})^\mathbb{T}$,
 which along with the definitions of $\overline{U}^{k+1}$ and $\overline{V}^{k+1}$
 implies that the following inequality holds:
 \[
   \widehat{\Phi}_{\lambda,\mu}(\widehat{U}^{k+1},V^{k+1},\widehat{U}^{k+1},\widehat{V}^{k+1})
   \ge\widehat{\Phi}_{\lambda,\mu}(\overline{U}^{k+1},\overline{V}^{k+1},\widehat{U}^{k+1},\widehat{V}^{k+1}).
 \]
 Now substituting the last two inequalities into \eqref{Fval-U} and \eqref{Fval-V}
 respectively yields that
  \begin{subequations}
  \begin{align}\label{Fval-equaU}
   {\Phi}_{\lambda,\mu}(\overline{U}^{k},\overline{V}^{k})
    \geq\widehat{\Phi}_{\lambda,\mu}(\widehat{U}^{k+1},\widehat{V}^{k+1},\overline{U}^{k},\overline{V}^{k})
      +\frac{\gamma_{1,k}}{2}\|U^{k+1}-\overline{U}^k\|_F^2;\qquad\\
   \label{Fval-equaV}
  {\Phi}_{\lambda,\mu}(\widehat{U}^{k+1},\widehat{V}^{k+1})
    \geq\widehat{\Phi}_{\lambda,\mu}(\overline{U}^{k+1},\overline{V}^{k+1},\widehat{U}^{k+1},\widehat{V}^{k+1})
     +\frac{\gamma_{2,k}}{2}\|V^{k+1}-\widehat{V}^{k+1}\|_F^2.
  \end{align}
  \end{subequations}
  In addition, by the definition of $F$ and $\widehat{F}$, we have
  $F(\widehat{U}^{k+1}\!,\widehat{V}^{k+1})\!\leq \widehat{F}(\widehat{U}^{k+1}\!,\widehat{V}^{k+1},\overline{U}^{k}\!,\overline{V}^{k})$,
  and $\widehat{\Phi}_{\lambda,\mu}(\widehat{U}^{{k}+1},\widehat{V}^{k+1},\overline{U}^{k},\overline{V}^{k})
  \ge{\Phi}_{\lambda,\mu}(\widehat{U}^{k+1},\widehat{V}^{k+1})$.
  Along with \eqref{Fval-equaU}, we get the first inequality in (i).
  From the first inequality, inequality \eqref{Fval-equaV}
  and $\widehat{\Phi}_{\lambda,\mu}(\overline{U}^{k+1},\overline{V}^{k+1},\widehat{U}^{k+1},\widehat{V}^{k+1})
  \ge{\Phi}_{\lambda,\mu}(\overline{U}^{k+1},\overline{V}^{k+1})$,
  we obtain the  second inequality of part (i).

  \noindent
  {\bf(ii)} From Step 5 of Algorithm \ref{MAPM}, $\gamma_{1,k}\ge\underline{\gamma_1}$
  and $\gamma_{2,k}\ge\underline{\gamma_2}$. Together with part (i), for each $k\in\mathbb{N}$,
  \begin{align*}
   {\Phi}_{\lambda,\mu}(\overline{U}^{0},\overline{V}^{0})
   &\ge{\Phi}_{\lambda,\mu}(\widehat{U}^{1},\widehat{V}^{1})
   \ge{\Phi}_{\lambda,\mu}(\overline{U}^{1},\overline{V}^{1})
   \ge\cdots\\
   &\ge{\Phi}_{\lambda,\mu}(\overline{U}^{k-1},\overline{V}^{k-1})
   \ge {\Phi}_{\lambda,\mu}(\widehat{U}^{k},\widehat{V}^{k})
   \ge {\Phi}_{\lambda,\mu}(\overline{U}^{k},\overline{V}^{k}).
  \end{align*}
  Recall that the function $\Phi_{\lambda,\mu}$ is coercive.
  So, the sequence $\{(\overline{U}^{k},\overline{V}^{k},\widehat{U}^{k},\widehat{V}^{k})\}_{k\in\mathbb{N}}$
  is bounded. Together with part (i), it follows that
  the sequence $\{(U^k,V^k)\}_{k\in\mathbb{N}}$ is also bounded.

  \noindent
  {\bf(iii)} Fix an arbitrary $k\in\mathbb{N}$. We first argue that the following inclusions hold:
  \begin{align}\label{UVk-indxset}
   J_{\overline{U}^{k+1}}\subseteq J_{\widehat{U}^{k+1}}\subseteq J_{\overline{U}^{k}},\,
   J_{V^{k+1}}\subseteq J_{\widehat{V}^{k+1}}=J_{\widehat{U}^{k+1}}
   \subseteq J_{\overline{V}^{k}}\ \ {\rm and}\ \
   J_{{U}^{k+1}}\subseteq J_{\overline{U}^{k}}.
 \end{align}
  By the definitions of $(\overline{U}^k,\overline{V}^k)$ and
  $(\widehat{U}^k,\widehat{V}^k)$, it is easy to check that
  $J_{\overline{U}^{k}}=J_{\overline{V}^{k}}$ and
  $J_{\widehat{U}^{k}}=J_{\widehat{V}^{k}}$.
  By \eqref{partU-equal1}, $J_{{U}^{k+1}}\subseteq J_{G^{k}}$,
  while by the expression of $G^k$ in Remark \ref{remark-MAPM} (b), we deduce that
  $J_{G^{k}}\subseteq J_{D^k}$. This, by $\overline{V}^k=P^kD^k$, implies that
  $J_{{U}^{k+1}}\subseteq J_{\overline{V}^k}=J_{\overline{U}^{k}}$.
  So, the last inclusion in \eqref{UVk-indxset} holds.
  By the expression of $V^{k+1}$ in \eqref{partV-equal1}, we deduce that
  $J_{V^{k+1}}\subseteq J_{\widehat{U}^{k+1}}=J_{\widehat{V}^{k+1}}$.
  Together with $J_{U^{k+1}}\subseteq J_{\overline{V}^{k}}$,
  \[
    \|\widehat{U}^{k+1}\|_{2,0}=\|\widehat{V}^{k+1}\|_{2,0}
    ={\rm rank}(\overline{X}^{k+1})\leq\|U^{k+1}\|_{2,0}
    \le\|\overline{V}^k\|_{2,0}={\rm rank}(\overline{X}^{k}).
  \]
  Thus, $J_{\widehat{U}^{k+1}}\subseteq J_{\overline{V}^{k}}=J_{\overline{U}^{k}}$,
  and the second group of inclusions in \eqref{UVk-indxset} hold.
  Note that
  \[
  \|\overline{U}^{k+1}\|_{2,0}=\|\overline{V}^{k+1}\|_{2,0}
   ={\rm rank}(\widehat{X}^{k+1})\le
    \min(\|\widehat{U}^{k+1}\|_{2,0},\|V^{k+1}\|_{2,0}).
  \]
  So, $J_{\overline{U}^{k+1}}\subseteq J_{\widehat{U}^{k+1}}$.
  Since $J_{\widehat{U}^{k+1}}\subseteq J_{\overline{V}^{k}}=J_{\overline{U}^{k}}$,
  the first group of inclusions in \eqref{UVk-indxset} hold. Moreover,
  \begin{equation}\label{UV-F20value}
   \|\overline{U}^{k+1}\|_{2,0}\leq\|{V}^{k+1}\|_{2,0}\leq
   \|\widehat{V}^{k+1}\|_{2,0}=\|\widehat{U}^{k+1}\|_{2,0}
   \le\|U^{k+1}\|_{2,0}\leq\|\overline{U}^{k}\|_{2,0}.
  \end{equation}
  This means that the sequence $\{\|\overline{U}^{k}\|_{2,0}\}_{k\in\mathbb{N}}$
  is nonincreasing and convergent. By using \eqref{UV-F20value} again,
  $\lim_{k\to\infty}\|U^{k}\|_{2,0}=\lim_{k\to\infty}\|V^{k}\|_{2,0}
  =\lim_{k\to\infty}\|\overline{U}^k\|_{2,0}=\lim_{k\to\infty}\|\widehat{U}^k\|_{2,0}$.
  Since $\{\|\overline{U}^k\|_{2,0}\}$ is a nonnegative integer sequence,
  together with \eqref{UVk-indxset} we obtain the desired result.
 \end{proof}

  Proposition \ref{prop1-MAPM} (iii) states that the nonzero column indices of $\{(\overline{U}^k,\overline{V}^k)\}_{k\in\mathbb{N}}$ tend to be stable
  for all $k$ large enough. Inspired by this, we develop a hybrid AMM method
  in which, Algorithm \ref{MAPM} is first used to generate a point pair
  $(\overline{U}^k,\overline{V}^k)$ with a stable nonzero column index set,
  and then an alternating MM method similar to Algorithm \ref{AMM}
  with $(\overline{U}^k,\overline{V}^k)$ as a starting point is applied to
  \begin{equation}\label{Fmu-min}
    \min_{U\in\mathbb{R}^{n\times\kappa},V\in\mathbb{R}^{m\times\kappa}}F_{\mu}(U,V)
    \ \ {\rm with}\ \kappa=|J_{\overline{U}^k}|
  \end{equation}
  which is an unconstrained smooth problem. The iterates of the hybrid AMM method are as follows.
 \begin{algorithm}[H]
  \caption{\label{HMAP}{\bf (Hybrid AMM method for solving \eqref{MS-FL20})}}
  \textbf{Initialization:} Seek an output $(\overline{U}^k,\overline{V}^k)$
  with a stable $\kappa=J_{\overline{U}^k}=J_{\overline{V}^k}$ of Algorithm \ref{MAPM}
  for \eqref{MS-FL20}. Set $(U^{-1},V^{-1})=(U^0,V^0):=(\overline{U}^k,\overline{V}^k)$.
  Choose $\beta_0\in[0,\beta)$ with $\beta\in[0,1]$. Let $l:=0$. \\
 \textbf{while} the stopping conditions are not satisfied \textbf{do}
  \begin{itemize}
   \item[\bf 1.] Select $\gamma_{1,l}>\tau_{\!V^{l}}$.
                 Let $\widetilde{U}^l\!:=U^l+\beta_l(U^l-U^{l-1})$ and compute
                 \begin{equation}\label{Ul-subprob}
                   U^{l+1}\in\mathop{\arg\min}_{U\in\mathbb{R}^{n\times \kappa}}
                   \Big\{\langle\nabla_{\!1}F(\widetilde{U}^l,V^l),U\rangle
                   +\frac{\mu}{2}\|U\|_F^2+\frac{\gamma_{1,l}}{2}\|U\!-\!\widetilde{U}^l\|_F^2\Big\}.
                 \end{equation}

 \item[\bf 2.]  Select $\gamma_{2,l}>\tau_{\!U^{l+1}}$.
                Let $\widetilde{V}^l\!:=V^l+\beta_l(V^l-V^{l-1})$ and compute
                \begin{equation}\label{Vl-subprob}
                 V^{l+1}\!\in\mathop{\arg\min}_{\!V\in\mathbb{R}^{m\times \kappa}}
                 \Big\{\langle\nabla_{\!2}F(U^{l+1},\widetilde{V}^l),V\rangle
                 +\frac{\mu}{2}\|V\|_F^2 +\frac{\gamma_{2,l}}{2}\|V\!-\!\widetilde{V}^l\|_F^2\Big\}.
                \end{equation}

 \item[\bf 3.] Update $\beta_l$ by $\beta_{l+1}\in[0,\beta)$ and let $l\leftarrow l+1$.
 \end{itemize}
 \textbf{end while}
 \end{algorithm}
 \begin{remark}\label{remark-HMAP}
  {\bf(a)} When $r$ is a rough upper estimation for the true $r^*$,
  the value of $\kappa$ is usually much less than $r$ and is close to $r^*$
  due to the column $\ell_{2,0}$-norm term in \eqref{MS-FL20}. Thus,
  the computation cost of Algorithm \ref{HMAP} is expected to be much less than
  that of Algorithm \ref{AMM} and \ref{MAPM}.

  \noindent
  {\bf(b)} Since the subproblems \eqref{Ul-subprob} and \eqref{Vl-subprob}
  are strongly convex, by following the same arguments as those for Proposition \ref{prop1-UVk} and \ref{prop2-UVk},
  one may show that the sequence $\{(U^{l},V^{l})\}_{l\in\mathbb{N}}$ generated by
  Algorithm \ref{HMAP} is convergent, which along with Proposition \ref{prop1-Phi}
  means that its limit, say $(\overline{U},\overline{V})$, is also a critical point of $\Phi_{\lambda,\mu}$
  associated to $r=\kappa$. By \cite[Section 5.4]{Lee19}, the initial condition set
  where the sequence $\{(U^l,V^l)\}_{l\in\mathbb{N}}$ converges to a strict saddle point
  has a zero measure. Together with \cite[Theorem 3.1]{TaoPanBi19},
  when $f$ satisfies the assumption there, the limit $(\overline{U},\overline{V})$ with
  ${\rm rank}(\overline{U}\overline{V}^{\mathbb{T}})\le r^*$ will have a high probability
  to satisfy the error bound in \eqref{error-bound}.
 \end{remark}

 \section{Numerical experiments}\label{sec5}

 We shall test the performance of Algorithm \ref{AMM} and \ref{HMAP} by
 applying them to matrix completion problem in a general sampling scheme,
 and our codes can be downloaded from \url{https://github.com/SCUT-OptGroup/UVFL20}.
 Note that the matrix max-norm has been adopted as a convex surrogate
 for the rank function in \cite{Fang18,Lee10,Srebro10}, and the max-norm regularized approach
 was demonstrated in \cite{Fang18} to outperform the nuclear-norm convex relaxation method
 for matrix completion and collaborative filtering under non-uniform sampling schemes.
 To confirm the efficiency of the column $\ell_{2,0}$-norm regularized model \eqref{MS-FL20},
 we compare the numerical results with those of the ADMM developed in \cite{Fang18} for
 the SDP reformulation of the max-norm penalized LS model and those of the alternating
 least squares (ALS) method \cite{Hastie15} for the factorized model \eqref{MC-Fnorm}.
 The ALS method has the same iterate steps as Algorithm \ref{MAPM} does except that
 the column $\ell_{2,0}$-norm in $\widehat{\Phi}_{\lambda,\mu}$ and the proximal terms
 in Step 1 and 3 are removed. The numerical tests were all performed in MATLAB on a desktop computer
 running on 64-bit Windows Operating System with an Intel(R) Core(TM) i7-7700 CPU 3.60GHz
 and 16 GB RAM.

 \subsection{Matrix completion in a general sampling}\label{sec5.1}

  We assume that a random index set
  $\Omega=\big\{(i_t,j_t)\in[n]\times [m]\!: t=1,\ldots,p\big\}$
  is available, and that the samples of the indices are drawn independently
  from a general sampling distribution
  $\Pi=\{\pi_{kl}\}_{k\in[n],l\in[m]}$ on $[n]\times[m]$.
  We adopt the same non-uniform sampling scheme as in \cite{Fang18},
  i.e., for each $(k,l)\in[n]\times[m]$, take $\pi_{kl}=p_kp_l$ with
  \begin{equation}\label{sampling-scheme}
   \textrm{Scheme 1}\!:\
      p_k=\!\left\{\begin{array}{ll}
         2p_0& {\rm if}\ k\le\frac{n}{10} \\
         4p_0& {\rm if}\ \frac{n}{10}\le k\le \frac{n}{5}\\
          p_0& {\rm otherwise}\\
     \end{array}\right.\ {\rm or}\ \
   \textrm{Scheme 2}\!:\ p_k=\!
   \left\{\begin{array}{ll}
         3p_0& {\rm if}\ k\le\frac{n}{10} \\
         9p_0& {\rm if}\ \frac{n}{10}\le k\le \frac{n}{5}\\
          p_0& {\rm otherwise}\\
     \end{array}\right.
  \end{equation}
  where $p_0>0$ is a constant such that $\sum_{k=1}^{n}p_k=1$,
  and $p_l$ is defined in a similar way under the two schemes.
  For any $X\in\mathbb{R}^{n\times m}$,
  we denote by $X_{\Omega}\in\mathbb{R}^{n\times m}$ the projection
  of $X$ onto the set $\Omega$, i.e., $[X_{\Omega}]_{ij}=X_{ij}$ if $(i,j)\in\Omega$,
  otherwise $[X_{\Omega}]_{ij}=0$. Then, the function $f$ in \eqref{MC-Fnorm}
  and \eqref{MS-FL20} has the form
  \[
    f(X)=\frac{1}{2}\big\|X_{\Omega}-M_{\Omega}\big\|_F^2\quad\ {\rm for}\ X\in\mathbb{R}^{n\times m}
  \]
  where $M_{ij}$ for $(i,j)\in\Omega$ are the observed entries.
  For the simulated data, we assume that $M_{i_t,j_t}$ with $(i_t,j_t)\in\Omega$
  for $t=1,2,\ldots,p$ are generated via the following observation model
 \begin{equation}\label{observe}
    M_{i_t,j_t}=M_{i_t,j_t}^*+\sigma({\xi_{t}}/{\|\xi\|})\|M_{\Omega}^*\|_F,
 \end{equation}
 where $M^*\!\in\mathbb{R}^{n\times m}$ is the true matrix of rank $r^*$,
 $\xi=(\xi_1,\ldots,\xi_p)^{\mathbb{T}}$ is the noisy vector whose entries
 are i.i.d. random variables obeying $N(0,1)$, and $\sigma>0$ is the noise level.
 \subsection{Implementation of algorithms}\label{sec5.2}

  For the ADMM in \cite{Fang18}, we use the default stopping criterion,
  starting point and parameters.
 As mentioned before, the ADMM is developed for solving the SDP reformulation
  of the max-norm penalized LS model:
  \begin{equation}\label{SDP-maxnorm}
   \min_{Z\in\mathbb{S}^{n+m}}
   \Big\{\frac{1}{2}\big\|Z_{\Omega}^{12}-M_{\Omega}\big\|_F^2
   +\lambda\|{\rm diag}(Z)\|_\infty\ \ {\rm s.t.}\ \
   \|Z^{12}\|_{\infty}\le\alpha,\,Z\in\mathbb{S}_{+}^{n+m}\Big\}
  \end{equation}
  where $Z=\left(\begin{matrix}
            Z^{11}& Z^{12}\\
            (Z^{12})^{\mathbb{T}}& Z^{22}
         \end{matrix}\right)$ with $Z^{11}\in\mathbb{S}^{n}$, $Z^{22}\in\mathbb{S}^m$
  and $Z^{12}\in\mathbb{R}^{n\times m}$, and $\alpha>0$ is an upper bound
  for the elementwise $\ell_{\infty}$-norm of the true matrix $M^*$.
  It is worthwhile to point out that the code of ADMM is solving model \eqref{SDP-maxnorm}
  with a varying $\lambda$ instead of a fixed $\lambda$.

  Next we focus on the implementation details of other three algorithms.
  By comparing \eqref{optUk-equa}-\eqref{optVk-equa}  with the first-order optimality
  conditions of problem \eqref{MS-FL20}, it is not hard to obtain that
  \begin{subnumcases}{}
   E_{U}^{k+1}\in\nabla\!f(U^{k+1}(V^{k+1})^\mathbb{T})V^{k+1}+\mu U^{k+1}
        +\lambda\partial\|U^{k+1}\|_{2,0};\nonumber\\
   E_{V}^{k+1}\in[\nabla\!f(U^{k+1}(V^{k+1})^\mathbb{T})]^\mathbb{T}U^{k+1}+\mu V^{k+1}
     +\lambda\partial\|V^{k+1}\|_{2,0}\nonumber
   \end{subnumcases}
  where
  \begin{align*}
   E_{U}^{k+1}&:=\big[\nabla\!f(U^{k+1}(V^{k+1})^\mathbb{T})V^{k+1}
      -\nabla\!f(\widetilde{U}^{k}(V^{k})^\mathbb{T})V^{k}\big]+\gamma_{1,k}(\widetilde{U}^k\!-\!U^{k+1});\\
  E_{V}^{k+1}&:=\big[\nabla\!f(U^{k+1}(V^{k+1})^\mathbb{T})^\mathbb{T}U^{k+1}
  -\nabla\!f(U^{k+1}(\widetilde{V}^{k})^\mathbb{T})^\mathbb{T}U^{k+1}\big]
  +\gamma_{2,k}(\widetilde{V}^k\!-\!V^{k+1}).
  \end{align*}
  In view of this, we terminate Algorithm \ref{AMM} at $(U^{k},V^{k})$ when
  ${\rm rank}(X^{k})=\cdots={\rm rank}(X^{k-19})$ with $X^j=U^j(V^j)^{\mathbb{T}}$
  for $j=1,2,\ldots$ and either of the following conditions holds:
  \[
    \frac{\|(E_{U}^{k},E_{V}^{k})\|_F}{1+\|X^k\|_F}\le\epsilon_1\ \ {\rm or}\ \
    \frac{\max_{1\le i\le 9}|\Phi_{\lambda,\mu}(U^k,V^k)-\!\Phi_{\lambda,\mu}(U^{k-i},V^{k-i})|}
    {\max(1,\Phi_{\lambda,\mu}(U^k,V^k))}\le\epsilon.
  \]
 From the first-order optimality conditions of \eqref{Fmu-min}, we terminate
 Algorithm \ref{HMAP} at $(U^{l},V^{l})$ when
 \[
   \frac{\|(E_{U}^{l},E_{V}^{l})\|_F}{1+\|X^l\|_F}\le\epsilon_3\ \ {\rm or}\ \
   \frac{\max_{1\le i\le 9}|F_{\mu}(U^l,V^l)-F_{\mu}(U^{l-i},V^{l-i})|}{\max(1,F_{\mu}(U^l,V^l))}\le\epsilon.
 \]
 For the ALS method, we adopt a stopping criterion stronger than the one used in \cite{Hastie15}:
 \[
   {\rm rank}(X^{k})=\cdots={\rm rank}(X^{k-19})\ \ {\rm and}\ \ \frac{\|\overline{U}^{k}(\overline{V}^{k})^\mathbb{T}-\overline{U}^{k-1}(\overline{V}^{k-1})^\mathbb{T}\|_F^2}
  {\|\overline{U}^{k-1}(\overline{V}^{k-1})^\mathbb{T}\|_F^2}\le \epsilon_2.
 \]
 We always choose $\epsilon=10^{-4},\epsilon_1=10^{-3},\epsilon_3=5\times 10^{-3}$
 and $\epsilon_2=10^{-6}$ for the subsequent tests.

  For Algorithm \ref{AMM}, we set $\gamma_{1,k}=(1+\delta)L_{\!f}\|V^k\|^2$ and
  $\gamma_{2,k}=(1+\delta)L_{\!f}\|U^{k+1}\|^2$ with $\delta=10^{-6}$. For Algorithm \ref{HMAP},
  similar $\gamma_{1,l}$ and $\gamma_{2,l}$ are also used. We employ Nesterov's accelerated strategy
  \cite{Nesterov83} to yield $\beta_k$ of Algorithm \ref{AMM} and \ref{HMAP}, i.e.,
  $\beta_k=\frac{t_{k-1}-1}{t_{k}}$ with $t_{-1}=t_0=1$ and $t_{k+1}=\frac{1+\sqrt{4t_k^2+1}}{2}$.
  Though our convergence results require a restriction on $\beta_k$, numerical tests
  show that Algorithm \ref{AMM} and \ref{HMAP} still converge without it. In view of this,
  we do not impose any restriction on such $\beta_k$ for the subsequent tests,
  and leave this gap for a future research topic.
  The starting point $(U^0,V^0)$ of Algorithm \ref{AMM} is chosen to be
  $(P_{1}[\Sigma_{r}(M_{\Omega})]^{1/2},Q_{1}[\Sigma_{r}(M_{\Omega})]^{1/2})$
  and that of ALS is chosen to be $(P_{1},Q_{1})$, where $P_1$ and $Q_1$ are
  the matrix consisting of the first $r$ left and right singular vectors of $M_{\Omega}$, respectively.
  The starting point $(U^0,V^0)$ of Algorithm \ref{HMAP} is given by Algorithm \ref{MAPM}
  from the starting point $(P_{1},Q_{1})$ with $\underline{\gamma_{1}} =\underline{\gamma_{2}}=10^{-8}$,
  $\varrho=0.8$ and $\gamma_{1,0}=\gamma_{2,0}=0.01$.

  For the parameters of model \eqref{MS-FL20}, we always choose $r=\max(100,\min(\lceil0.5\min(n,m)\rceil,100))$
  and $\mu=10^{-8}$. Next we focus on the setting of $\lambda$. By Remark \ref{remark1-AMM} (b),
  $\overline{\lambda}:=0.5(1+\varsigma)(\mu\!+\!\gamma_{1,0})g^{\downarrow}_2$
  with $\varsigma=10^{-4}$ is a smaller $\lambda$ such that ${\rm rank}(U^1)\leq1$,
  and $\underline{\lambda}:=0.5(1-\varsigma)(\mu\!+\!\gamma_{1,0})g^{\downarrow}_r$ is a larger
  $\lambda$ such that ${\rm rank}(U^1)=r$, where $g=[\|G_1^0\|^2,\ldots,\|G_r^0\|^2]$ with $G^0$
  same as in Remark \ref{remark1-AMM} (b). This means that the desired $\lambda$ lies in the interval
  $[\underline{\lambda},\overline{\lambda}]$. Inspired by this, we take $\lambda_i=\overline{\lambda}+(1-i)\Delta\lambda$
  for $i=1,\ldots,n_{\lambda}$ with $\Delta\lambda=\frac{\overline{\lambda}-\underline{\lambda}}{n_{\lambda}-1}$,
  solve model \eqref{MS-FL20} associated to each $\lambda_i$ with Algorithm \ref{AMM},
  and pick the best from the obtained results associated to all $\lambda_i$ as the final output
  of Algorithm \ref{AMM}. Similarly, we take $\lambda_i=\overline{\lambda}+(1-i)\Delta\lambda$
  for $i=1,2,\ldots,n_{\lambda}$ with $\overline{\lambda}:=0.5(1+\varsigma)g^{\downarrow}_2$ and
  $\underline{\lambda}:=0.5(1-\varsigma)g^{\downarrow}_r$ to define $\Delta\lambda$,
  where the vector $g$ is determined by $G^0$ from Remark \ref{remark-MAPM} (b), solve model \eqref{MS-FL20}
  associated to each $\lambda_i$ with Algorithm \ref{MAPM}, and pick the best from the obtained results
  associated to all $\lambda_i$ as the final output of Algorithm \ref{MAPM}. For model \eqref{MC-Fnorm},
  since there is lack of such a good property, we choose the interval $[\underline{c},\overline{c}]$
  of $c_{\lambda}$ heuristically such that the rank of the solution to problem \eqref{MC-Fnorm}
  associated to $\lambda=c_{\lambda}{\rm SR}\|M_{\Omega}\|$ is included in $[0,r]$, where ${\rm SR}$ is
  the sample ratio. Then, we solve model \eqref{MC-Fnorm} with $\lambda_i=c_i{\rm SR}\|M_{\Omega}\|$
  for $i=1,\ldots,n_{\lambda}$ and pick the best from the obtained results associated to all $\lambda_i$
  as the final output of ALS, where $c_i=\overline{c}+(1\!-\!i)\Delta c$ with
  $\Delta c=\frac{\overline{c}-\underline{c}}{n_{\lambda}-1}$.
  As shown by the first two subfigures in Figure \ref{fig1},
  there is an interval of $\lambda$ such that Algorithm \ref{AMM} and \ref{HMAP} applied to
  \eqref{MS-FL20} with any $\lambda$ in this interval yield a lower relative error
  and a rank equal to $r^*$, while the last subfigure in Figure \ref{fig1} shows that
  there is an interval of $\lambda$ such that the outputs of ALS applied to model \eqref{MC-Fnorm}
  with one of $\lambda$ in the interval have a lower relative error but their ranks are higher than $r^*$.
 \begin{figure}[h]
  \setlength{\abovecaptionskip}{0.2pt}
  \centering
 \includegraphics[height=5.0cm,width=6.0in]{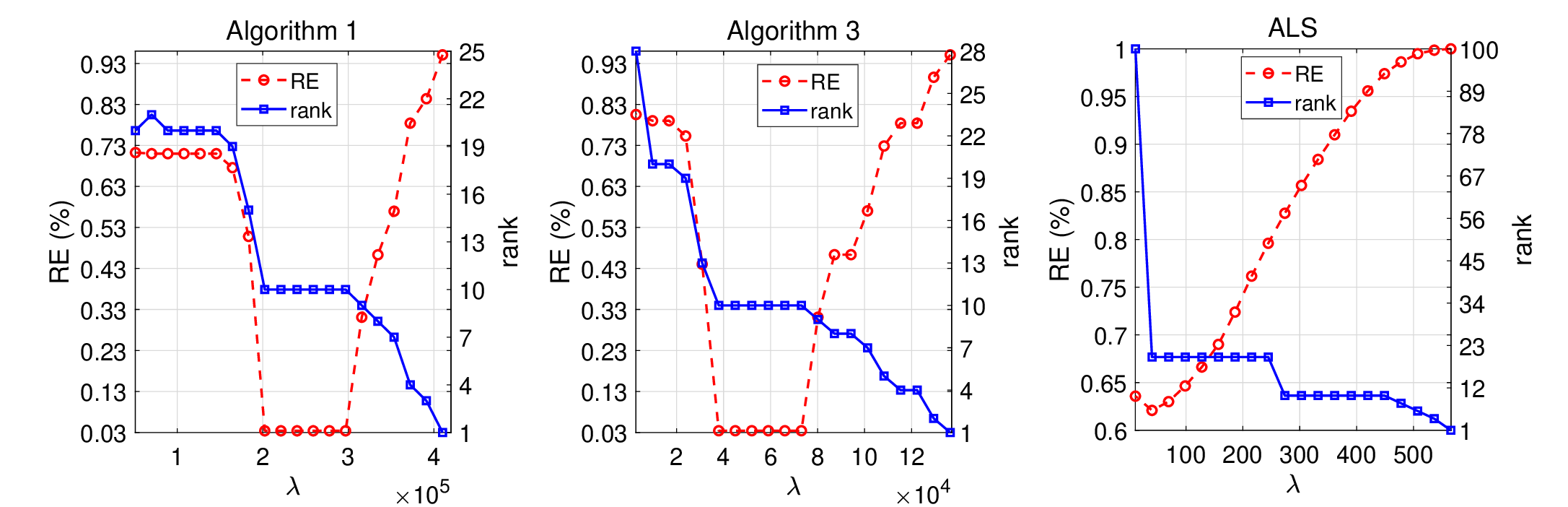}
  \caption{The relative error and rank curves of three solvers under different $\lambda$
  for ${\rm SR}=0.2$}
  \label{fig1}
 \end{figure}

  Next we present a rule to pick the best from the obtained results associated to all $\lambda_i$.
  A desirable solution of low-rank optimization problems is expected to have a low rank
  and a low relative error. Since the true $M^*$ is unknown, the relative error is unavailable.
  So, in the subsequent numerical experiments, we record the loss value and
  rank of the output of three algorithms associated to $\lambda_i$ in ${\rm loss}(i)$
  and ${\rm rank}(i)$ for $i=1,2,\ldots,\widehat{n}_{\lambda}$, where $\widehat{n}_{\lambda}$ is
  the number of different ranks. Here, when multiple outputs have the same rank, only the one with
  the smallest loss value is recorded.
  Then we set ${\rm loss}(0)=0.5\|M_{\Omega}\|_F^2$ and ${\rm rank}(0)=0$, and compute
  \[
    \vartheta(i)=\frac{|{\rm loss}(i\!-\!1)-{\rm loss}(i)|}{|{\rm rank}(i\!-\!1)-{\rm rank}(i)|}\ \ {\rm for}\ \ i=1,2,\ldots,\widehat{n}_{\lambda}.
  \]
  Clearly, $\vartheta(i)$ represents the relative change rate of the loss value with respect to
  the rank, obtained from model \eqref{MS-FL20} associated to $\lambda_i$.
  For the problem $\min_{U\in\mathbb{R}^{n\times r},V\in\mathbb{R}^{m\times r}}f(UV^{\mathbb{T}})$
  where $f$ is the least squares loss, it is easy to verify that its optimal value has the same
  magnitude as the noise does when $r\ge r^*$, but when $r<r^*$ it usually has a higher magnitude.
  This means that for the outputs of model \eqref{MS-FL20}, if their loss values have a larger change rate,
  it is highly possible for them to have a low rank; otherwise, they will have a high rank.
  Inspired by this, we find the smallest positive integer $i^*$ such that
  $\vartheta(i^*\!-\!1)/\vartheta(i^*)>5$ for simulated data, and the smallest
  positive integer $i^*$ such that $\vartheta(i^*\!-\!1)/\vartheta(i^*)>2$ for real data
  (by considering that the real data matrix usually has an approximately low rank),
  and then choose $\lambda_{i^*\!-\!1}$ as the best $\lambda$ for Algorithm \ref{AMM} and ALS,
  and the solution associated to $\lambda_{i^*\!-\!1}$ yielded by Algorithm \ref{MAPM} as
  the initial point of Algorithm \ref{HMAP}. The subsequent numerical tests
  always use $n_{\lambda}=21$ for three algorithms.

 \subsection{Numerical results for simulated data}\label{sec5.3}

  We test four solvers on simulated data under the non-uniform sampling
  scheme in \eqref{sampling-scheme}. We generate the true
  $M^*$ by $M^*=M_{L}^*(M_{R}^*)^{\mathbb{T}}$, where $M_{L}^*$ and $M_{R}^*$
  are an $n\times r^*$ matrix with each entry sampled independently
  from the standard normal distribution $N(0,1)$. Thus, $M^*\in\mathbb{R}^{n\times n}$
  is a rank $r^*$ matrix. The noisy observation entries $M_{i_t,j_t}$ with $(i_t,j_t)\in\Omega$
  are obtained from \eqref{observe} with $\sigma=0.1$ and $\xi_t\sim N(0,1)$,
  where the index set $\Omega$ is obtained in terms of Scheme 1.
  To evaluate the recovery results, we adopt the metric of relative error (RE)
  given by $\frac{\|X^{\rm out}-M^*\|_F}{\|M^*\|_F}$,
  where $X^{\rm out}$ represents the output of a solver. We consider different
  setting of $n, r^*$ and SR, and run simulation under each setting for five
  different instances.
 \setlength{\tabcolsep}{1mm}
 \begin{table}[h]
  \setlength{\belowcaptionskip}{-0.01cm}
  \centering
  \scriptsize
  \caption{\small Average RE and running time of four solvers for non-uniformly sampled synthetic data}\label{Simulated}
  \scalebox{1}{
 \begin{tabular}{cc|lcc|lcc|lccc|lcc}
 \Xhline{0.7pt}
 $n$&\!\! ($r^*$,\,SR)& \multicolumn{3}{l}{\qquad Algorithm \ref{AMM}}&\multicolumn{3}{l}{\qquad Algorithm \ref{HMAP}}
 &\multicolumn{4}{l}{\qquad ALS}&\multicolumn{3}{l}{\qquad ADMM}\\
 \cmidrule(lr){3-5} \cmidrule(lr){6-8} \cmidrule(lr){9-12} \cmidrule(lr){13-15}
  & &  RE&\!\! rank \!\! &time(s)&\!\!  RE \!\!\!\!& rank \!\!& time(s) &\ \ $c_{\lambda}$\!& RE& rank&\!\!\!\! time(s)&\!  RE&\!\!\! rank&\!\!\!\! time(s)\\
 \hline
 1000     &(8,0.10) &                 0.070  & 8 & 11.5 & {\color{red}\bf 0.066} & 8 & 5.73 &[0.2,10.5]  &0.847 & 17 & 30.0 &0.191&667&154.8\\
          &(8,0.15) & {\color{red}\bf 0.046} & 8 & 15.2 &                 0.047  & 8 & 6.79 &[0.1,\ 7.0] &0.845 & 14 & 30.6 &0.154&751&155.9\\
          &(8,0.20) & {\color{red}\bf 0.038} & 8 & 15.9 & {\color{red}\bf 0.038} & 8 & 6.86 &[0.1,\ 5.5] &0.871 & 13 & 35.0 &0.135&729&161.6\\
          &(8,0.25) & {\color{red}\bf 0.032} & 8 & 19.1 & {\color{red}\bf 0.032} & 8 & 8.44 &[0.1,\ 4.5] &0.916 & 10 & 40.4 &0.128&1000&164.2\\

          &(10,0.10) &                0.081  & 10 & 13.6 & {\color{red}\bf0.076} & 10 &5.74 &[0.3,10.5]&0.835 & 20 &34.1&0.195 &678 &159.5\\
          &(10,0.15) & {\color{red}\bf0.052} & 10 & 16.0 &                0.053  & 10 &6.79 &[0.2,7.0]& 0.872 & 16 &31.7  &0.160 &741 &157.0\\
          &(10,0.20) & {\color{red}\bf0.043} & 10 & 16.5 & {\color{red}\bf0.043} & 10 &7.21 &[0.1,5.5]& 0.923 & 12 &35.7  &0.142 &728 &155.9\\
          &(10,0.25) & {\color{red}\bf0.036} & 10 & 19.8 & {\color{red}\bf0.036} & 10 &8.65 &[0.1,4.5]& 0.817 & 16 &40.6  &0.132 &1000&157.4\\

          &(20,0.10) &  0.133                & 20 &19.6 & {\color{red}\bf 0.129}& 20 & 6.65 & [1.0,\ 11]& 0.880& 29 & 63.5 &0.253 &691&151.7\\
          &(20,0.15) &  0.084                & 20 &18.1 & {\color{red}\bf 0.082}& 20 & 7.57 & [0.4,7.5]& 0.795 & 33 & 76.1 &0.187 &765&151.8\\
          &(20,0.20) &{\color{red}\bf 0.065} & 20 &18.8 & {\color{red}\bf 0.065}& 20 & 7.78 & [0.2,5.5]& 0.820 & 20 & 39.5 &0.159 &719&156.9\\
          &(20,0.25) &{\color{red}\bf0.053}  & 20 &21.2 &                 0.054 & 20 & 9.18 & [0.1,4.5]& 0.883 & 24 & 49.9 &0.141 &1000&156.0\\

 \cmidrule(lr){1-15}
 3000     &(10,0.10) &                0.039  & 10 & 105.4 & {\color{red}\bf 0.038} & 10 & 55.8 & [0.2,\ 10]& 0.807 & 24 & 250.5&- &-&-\\
          &(10,0.15) &                0.029  & 10 & 145.1 & {\color{red}\bf 0.028} & 10 & 65.3 & [0.1,6.5]& 0.870 & 18 & 220.0 &- &-&-\\
          &(10,0.20) &{\color{red}\bf 0.024} & 10 & 159.9 & {\color{red}\bf 0.024} & 10 & 73.7 & [0.1,5.0]& 0.881 & 16 & 245.1 &- &-&-\\
          &(10,0.25) &{\color{red}\bf 0.020} & 10 & 260.5 & {\color{red}\bf 0.020} & 10 & 98.2 & [0.1,4.0]& 0.975 & 10 & 266.4 &- &-&-\\

          &(20,0.10) &                0.060  & 20 & 119.3 &  {\color{red}\bf0.055} & 20 & 58.0 & [0.3,\ 11]& 0.973 & 20& 203.3 &- &-&-\\
          &(20,0.15) & {\color{red}\bf0.041} & 20 & 147.2 &  {\color{red}\bf0.041} & 20 & 66.1 & [0.2,7.0]& 0.877 & 32 & 255.4&- &-&-\\
          &(20,0.20) & {\color{red}\bf0.034} & 20 & 172.9 &  {\color{red}\bf0.034} & 20 & 76.0 & [0.1,5.5]& 0.933 & 24 & 261.5 &- &-&-\\
          &(20,0.25) & {\color{red}\bf0.029} & 20 & 242.3 &  {\color{red}\bf0.029} & 20 & 98.9 & [0.1,4.0]& 0.833 & 32 & 304.2 &- &-&-\\

\cmidrule(lr){1-15}
 5000     &(10,0.10) &                 0.030  & 10 &318.4  & {\color{red}\bf 0.028} & 10 & 165.5 & [0.2,\ 11] & 0.850 & 20 & 703.2 &- &-&-\\
          &(10,0.15) & {\color{red}\bf 0.022} & 10 &443.4  & {\color{red}\bf 0.022} & 10 & 200.6 & [0.1,7.0]& 0.706 & 14 & 759.3 &- &-&-\\
          &(10,0.20) & {\color{red}\bf 0.018} & 10 &513.3  & {\color{red}\bf 0.018} & 10 & 234.7 & [0.1,5.5]& 0.983 & 10 & 837.0 &- &-&-\\
          &(10,0.25) & {\color{red}\bf 0.016} & 10 &913.4  & {\color{red}\bf 0.016} & 10 & 311.0 & [0.1,4.0]& 0.984 & 10 & 804.4 &- &-&-\\

          &(20,0.10) &                0.044  & 20 & 322.0 & {\color{red}\bf 0.041} &20& 178.0& [0.3,\ 11]&0.934 & 28 & 639.2 &- &-&-\\
          &(20,0.15) &{\color{red}\bf 0.031} & 20 & 455.7 & {\color{red}\bf 0.031} &20& 212.3& [0.1,7.0]& 0.918 & 28 & 943.0 &- &-&-\\
          &(20,0.20) &{\color{red}\bf 0.026} & 20 & 546.2 & {\color{red}\bf 0.026} &20& 242.9& [0.1,5.5]& 0.881 & 32 & 826.2 &- &-&-\\
          &(20,0.25) &{\color{red}\bf 0.022} & 20 & 856.5 & {\color{red}\bf 0.022} &20& 324.8& [0.1,4.0]& 0.975 & 20 & 903.7 &- &-&-\\
 \Xhline{0.7pt}
 \end{tabular}}
 \end{table}

   Table \ref{Simulated} reports the average RE, rank and running time (in seconds) of four solvers,
  where the results of ADMM are not reported for $n\ge 3000$ because it is too time-consuming.
  We see that for all test instances, the outputs of Algorithm \ref{AMM} and \ref{HMAP} not only
  have much lower RE than those of ALS and ADMM do, but also their ranks are equal to $r^*$,
  which coincides with their performance in Figure \ref{fig1} with $\lambda$ from the best interval.
  This means that the proposed column $\ell_{2,0}$-regularized factorization model is superior to
  another two models in capturing a low rank and low RE solution for non-uniformly sampled data.
  In Table \ref{Simulated}, the columns corresponding to ADMM show that the max-norm penalized model
  is suitable for non-uniform sampling in terms of RE, but can not promote a low-rank solution;
  while the columns corresponding to ALS show that model \eqref{MC-Fnorm} can promote low-rank solutions,
  but is not suitable for non-uniformly sampled data due to high RE.
  This coincides with the performance of the nuclear-norm and max-norm penalized models in \cite{Fang18}.

 In addition, for $r^*=5$ and $n=m=1000$, Figure \ref{fig2} plots the average RE
 over {\bf five} repetitions under ${\rm SR}=0.04,0.06,0.08,\ldots,0.2$. We see that
 under the two non-uniform sampling schemes, the relative errors yielded by
 four solvers decrease as the sampling ratio increases, but Algorithm \ref{AMM} and \ref{HMAP}
 have better performance than ADMM does, and the ALS method gives the worst results.
 \begin{figure}[h]
 \setlength{\abovecaptionskip}{0.2pt}
  \centering
 \includegraphics[height=6cm,width=6.0in]{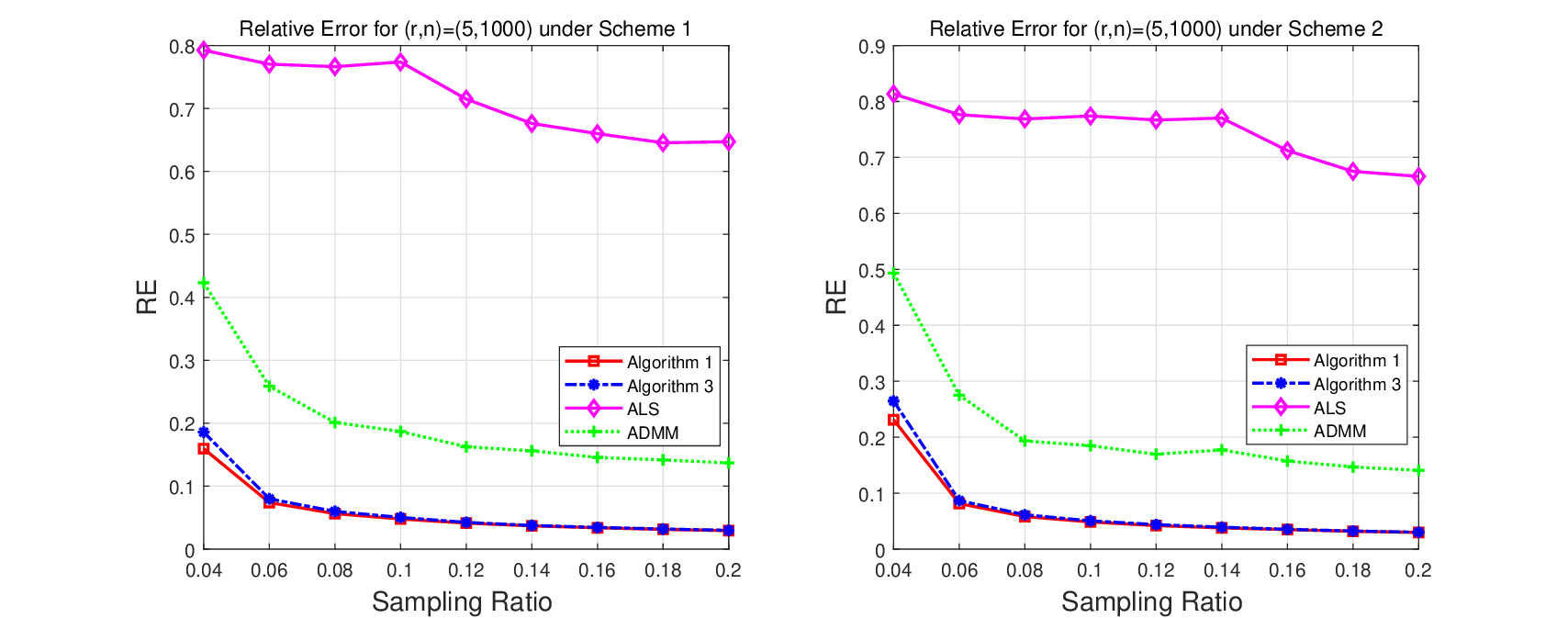}
  \caption{The relative errors of four solvers under different sampling ratios for noisy case}
  \label{fig2}
 \end{figure}
 \subsection{Numerical results for real data}\label{sec5.4}

  We test four methods with the matrix completion problem
  based on some real data sets, including the Jester joke dataset,
  the MovieLens dataset, and the Netflix dataset. For each data set,
  let $M^0$ be the original incomplete data matrix such that
  the $i$th row of $M^0$ corresponds to the ratings given by the $i$th user.
  We first consider the Jester joke dataset which is available through
  \url{http://www.ieor.berkeley.edu/~goldberg/jester-data/}. This dataset contains
  more than 4.1 million ratings for $100$ jokes from $73,421$ users. The whole
  Jester joke dataset contains three subdatasets: (1) jester-1: 24,983 users who rate 36 or more jokes;
  (2) jester-2: 23,500 users who rate 36 or more jokes; (3) jester-3: 24,938 users who rate
  between 15 and 35 jokes. More descriptions can be found in \cite{Chen12,Ma09,Toh10},
  where the nuclear-norm convex relaxation is used to study this dataset.
  Due to the large number of users, we first randomly select $n_u$ rows from $M^0$
  and then randomly permute the ratings from these users to generate
  $M\in\mathbb{R}^{{n_u}\times 100}$ as in \cite{Fang18}. Next, we adopt Scheme 1
  to generate a set $\Omega$ of observed indices.
  Since we can only observe the entry $(j,k)$ if $(j,k)\in\Omega$ and
  $M_{jk}$ is given, the actual sampling ratio is less than the input SR.

  Since the true $M^*$ is unknown for real datasets, we cannot compute
  the relative error as we did for simulated data. Similar to \cite{Toh10},
  we use the metric of the normalized mean absolute error
  \[
    {\rm NMAE}=\frac{\sum_{(i,j)\in\Gamma\backslash\Omega}|X^{\rm out}_{i,j}-M_{i,j}|}
   {|\Gamma\backslash\Omega|(r_{\rm max}-r_{\rm min})}
  \]
  to measure the accuracy of the output of an algorithm,
 where $\Gamma\!:=\{(i,j)\in[n_{u}]\times[100]\!: M_{ij}\ \textrm{is given}\}$
 denotes the set of indices for which $M_{ij}$ is given, and
 $r_{\rm min}$ and $r_{\rm max}$ denote the lower and upper bounds for the ratings, respectively.
 In the Jester joke dataset, the range is from $-10$ to $10$.
  \begin{table}[h]
  \setlength{\belowcaptionskip}{-0.01cm}
  \centering
  \scriptsize
  \caption{\small Average NMAE and running time of four solvers for the Jester joke dataset}\label{jester}
  \scalebox{1}{
 \begin{tabular}{cc|lcc|lcc|lccc|lcc}
 \Xhline{0.7pt}
 Dataset&\!\! ($n_u$,\,SR)& \multicolumn{3}{l}{\qquad Algorithm \ref{AMM}}&\multicolumn{3}{l}{\qquad Algorithm \ref{HMAP}}
 &\multicolumn{4}{l}{\qquad ALS}&\multicolumn{3}{l}{\qquad ADMM}\\
 \cmidrule(lr){3-5} \cmidrule(lr){6-8} \cmidrule(lr){9-12} \cmidrule(lr){13-15}
  & &\!\!\!\! NMAE&\!\!\!\! rank \!\!\!\! &time& \!\!\!\!  NMAE &\!\!\!\! rank \!\!\!\!& time & $\ \ \ \ c_{\lambda}$&\!\!\!\! NMAE&\!\!\!\! rank&\!\!\!\! time& NMAE& \!\!\!\! rank&\!\!\!\! time \\
 \hline 
 jester-1 &(1000,0.15) &      0.228            & 9 & 2.01 &                0.198  & 1 &0.29&[0.9,6.4] & 0.217 & 6 & 6.62 &{\color{red}\bf0.195}&100&25.3\\
          &(1000,0.20) &{\color{red}\bf 0.188} & 1 & 0.33 &{\color{red}\bf 0.188} & 1 &0.20&[0.8,4.8] & 0.221 & 1 & 1.46 &               0.190 &100&25.2\\
          &(1000,0.25) &{\color{red}\bf 0.187} & 1 & 0.21 &{\color{red}\bf 0.187} & 1 &0.16&[0.8,3.6] & 0.223 & 1 & 0.82 &               0.188 &100&25.2\\

          &(2000,0.15)  &{\color{red}\bf 0.195} & 1 & 0.82 &{\color{red}\bf 0.195}& 1 & 0.48 &[1.2,6.0] & 0.221 & 1 &1.79 &               0.196 &100&177.3\\
          &(2000,0.20)  &                0.194  & 1 & 0.58 &                0.194 & 1 & 0.43 &[1.0,4.8] & 0.222 & 1 &3.10 &{\color{red}\bf0.193}&100&176.4\\
          &(2000,0.25)  &{\color{red}\bf 0.189} & 1 & 0.50 &{\color{red}\bf 0.189}& 1 & 0.39 &[0.9,3.8] & 0.221 & 1 &3.16 &{\color{red}\bf0.189}&100&176.1\\

          &(4000,0.15)  &                0.203  & 8 & 7.46 &{\color{red}\bf 0.196} & 1 & 1.37 & [1.4,6.4]& 0.222 & 2 & 6.82 &- &-&-\\
          &(4000,0.20)  &{\color{red}\bf 0.190} & 1 & 1.22 &{\color{red}\bf 0.190} & 1 & 0.88 & [1.0,4.6]& 0.222 & 1 & 2.74 &- &-&-\\
          &(4000,0.25)  &{\color{red}\bf 0.185} & 1 & 1.41 &{\color{red}\bf 0.185} & 1 & 0.88 & [0.9,3.8]& 0.222 & 1 & 7.99 &- &-&-\\
 \cmidrule(lr){1-15}
 jester-2 &(1000,0.15)  &             0.197     & 1 & 0.29 &{\color{red}\bf 0.196}& 1 & 0.21 &[0.9,6.2]& 0.217 & 8 & 6.35 &{\color{red}\bf 0.196} &100&24.1\\
          &(1000,0.20)  &{\color{red}\bf 0.189} & 1 & 0.20 &{\color{red}\bf 0.189}& 1 & 0.14 &[0.8,4.6]& 0.223 & 1 & 0.82 &0.192   &100&24.2\\
          &(1000,0.25)  &{\color{red}\bf 0.187} & 1 & 0.20 &{\color{red}\bf 0.187}& 1 & 0.15 &[0.8,3.6]& 0.224 & 1 & 0.72 &0.190   &100&24.2\\

          &(2000,0.15)  &                0.196  & 1 & 0.71 &{\color{red}\bf 0.194} & 1 & 0.69 &[1.3,6.4]& 0.223 & 1 & 3.49&0.195 &100&178.9\\
          &(2000,0.20)  &{\color{red}\bf 0.189} & 1 & 1.02 &{\color{red}\bf 0.189} & 1 & 0.53 &[1.0,4.8]& 0.222 & 1 & 3.29 &0.192 &100&177.2\\
          &(2000,0.25)  &{\color{red}\bf 0.188} & 1 & 0.49 &{\color{red}\bf 0.188} & 1 & 0.37 &[0.8,3.8]& 0.219 & 4 & 7.42 &0.190 &100&177.6\\

          &(4000,0.15)  &{\color{red}\bf0.194}  & 1 & 1.22  &{\color{red}\bf 0.194} & 1 & 0.80 &[1.3,6.4]& 0.222 & 2 & 7.66 &- &-&-\\
          &(4000,0.20)  &{\color{red}\bf 0.187} & 1 & 0.86  &{\color{red}\bf 0.187} & 1 & 0.66 &[1.0,4.6]& 0.222 & 1 & 3.46 &- &-&-\\
          &(4000,0.25)  &{\color{red}\bf 0.186} & 1 & 0.84  &{\color{red}\bf 0.186} & 1 & 0.64 &[0.9,3.4]& 0.223 & 1 & 2.96 &- &-&-\\

\cmidrule(lr){1-15}
 jester-3 &(1000,0.15)  & 0.276 & 3 & 1.34 & 0.259 & 2 & 0.43 &[0.1,6.8]& 0.227 & 11& 6.08 &{\color{red}\bf 0.217} &88&23.8\\
          &(1000,0.20)  & 0.251 & 1 & 0.44 & 0.244 & 4 & 0.27 &[0.1,4.8]& 0.227 & 4 & 5.28 &{\color{red}\bf 0.212} &87&24.2\\
          &(1000,0.25)  & 0.301 & 5 & 2.33 & 0.263 & 2 & 0.25 &[0.1,4.4]& 0.227 & 8 & 7.18 &{\color{red}\bf 0.213} &91&24.2\\

          &(2000,0.15)  & 0.245 & 3 & 6.91 &  0.241 & 4 &1.30&[0.1,7.2]& 0.227 & 10 & 13.5 &{\color{red}\bf 0.217} &91&173.9\\
          &(2000,0.20)  & 0.258 & 6 & 9.85 &  0.248 & 5 &1.52&[0.1,5.0]& 0.223 & 8  & 17.0 &{\color{red}\bf 0.212} &91&176.1\\
          &(2000,0.25)  & 0.268 & 3 & 5.84 &  0.252 & 2 &1.16&[0.2,4.0]& 0.224 & 12 & 20.9 &{\color{red}\bf 0.213} &91&177.8\\

          &(4000,0.15)  & 0.249 & 1 & 4.97 &  0.245 & 2 & 1.65 & [0.3,7.0]& {\color{red}\bf0.229} & 5 & 21.6 &- &-&-\\
          &(4000,0.20)  & 0.258 & 2 & 6.53 &  0.255 & 2 & 2.04 & [0.3,5.2]& {\color{red}\bf0.231} & 3 & 11.3 &- &-&-\\
          &(4000,0.25)  & 0.234 & 2 & 4.64 &  0.241 & 2 & 1.89 & [0.3,4.0]& {\color{red}\bf0.226} & 8 & 21.7 &- &-&-\\
 \Xhline{0.7pt}
 \end{tabular}}
 \end{table}

 For the Jester joke dataset, we consider different settings of $n_u$ and SR,
 and report the average NMAE, rank and time (in seconds) for running the same setting
 {\bf five} times in Table \ref{jester}. Among others, the results of ADMM for $n_{u}=4000$
 are not reported since the adjusting scheme of $\lambda$ is not available in the code.
 We see that for jester-1 and jester-2,
 Algorithm \ref{AMM} and \ref{HMAP} yield comparable even a little better NMAE
 than ADMM does, but for jester-3 they give a little worse NMAE than
 ALS and ADMM do. For all settings, Algorithm \ref{AMM} and \ref{HMAP}
 yield much lower rank and require much less running time than ADMM does.
 The ALS method yields the worst NMAE for jester-1 and jester-2,
 and require comparable running time with that of Algorithm \ref{AMM} and \ref{HMAP}.

  Next we consider the MovieLens dataset from \url{http://www.grouplens.org/node/73}.
  The dataset contains two subdatasets: the Movie-100K dataset and the Movie-1M dataset,
  and the rating range is from $r_{\rm min}=1$ to $r_{\rm max}=5$. The Movie-100K dataset
  contains 100,000 ratings for 1682 movies by 943 users, while the latter contains
  1,000,209 ratings of 3900 movies made by 6040 users. For the Movie-100K dataset,
  we also consider the data matrix $\widetilde{M}^0=M^0-3$ so as to be consistent with
  the code of ADMM. We first randomly select $n_r$ users from $\widetilde{M}^0$
  and randomly select their $n_c$ column ratings, and then sample the observed entries
  with the schemes in \eqref{sampling-scheme}. Table \ref{Movie-100K} reports the averaged NMAE, rank and running time
  (in seconds) after running the setting $(n_r,n_c)=(943,1682)$ {\bf five} times.
  We see that Algorithm \ref{HMAP} yields a little better NMAE than other three solvers do,
  Algorithm \ref{AMM} gives worse NMAE than ADMM does for ${\rm SR}=0.1$ and $0.15$;
  and Algorithm \ref{AMM} and \ref{HMAP} yield the lowest rank solutions for all test problems,
  but ADMM gives the highest rank solutions.
 \begin{table}[h]
  \setlength{\belowcaptionskip}{-0.01cm}
  \centering
  \scriptsize
  \caption{\small Average NMAE and running time of four methods for Movie-100K dataset}\label{Movie-100K}
  \scalebox{1}{
 \begin{tabular}{cc|lcc|lcc|lccc|lcc}
 \Xhline{0.7pt}
 &SR& \multicolumn{3}{l}{\qquad Algorithm \ref{AMM}}&\multicolumn{3}{l}{\qquad Algorithm \ref{HMAP}}
 &\multicolumn{4}{l}{\qquad ALS}&\multicolumn{3}{l}{\qquad ADMM}\\
 \cmidrule(lr){3-5} \cmidrule(lr){6-8} \cmidrule(lr){9-12} \cmidrule(lr){13-15}
  & & NMAE& rank &time&  NMAE& rank& time  & \ \ \ $c_{\lambda}$& NMAE& rank& time&  NMAE& rank& time \\
 \hline
   Scheme 1&0.10 & 0.244 & 1 & 15.5 &{\color{red}\bf 0.231} & 1 & 2.93 & [0.4,9.2]& 0.248 & 10& 18.8 &0.232 &757&354.3\\
          &0.15  & 0.226 & 1 & 16.9 &{\color{red}\bf 0.219} & 1 & 3.08 & [0.4,5.4]& 0.247 & 1 & 3.63 &0.225 &867&353.9\\
          &0.20  & 0.216 & 1 & 16.3 &{\color{red}\bf 0.212} & 1 & 3.37 & [0.3,5.0]& 0.244 & 5 & 23.1 &0.220 &901&361.8\\
          &0.25  & 0.209 & 1 & 17.6 &{\color{red}\bf 0.207} & 1 & 3.59 & [0.3,2.6]& 0.242 & 1 & 5.56 &0.215 &927&361.8\\
  \Xhline{0.7pt}
  Scheme 2&0.10  & 0.246 & 1 & 15.3 & {\color{red}\bf 0.232} & 1 & 2.90 & [0.3,9.2]& 0.248 & 5  & 12.5 &0.233 &752&360.9\\
          &0.15  & 0.229 & 1 & 18.1 & {\color{red}\bf 0.221} & 1 & 3.05 & [0.3,6.2]& 0.247 & 12 & 19.2 &0.226 &851&361.6\\
          &0.20  & 0.217 & 1 & 17.0 & {\color{red}\bf 0.212} & 1 & 3.33 & [0.3,5.0]& 0.244 & 10 & 21.2 &0.221 &900&369.0\\
          &0.25  & 0.210 & 1 & 15.4 & {\color{red}\bf 0.208} & 1 & 3.39 & [0.3,2.6]& 0.244 & 1  & 4.93 &0.217 &922&366.7\\
 \Xhline{0.7pt}
 \end{tabular}}
 \end{table}

 \begin{table}[h]
  \setlength{\belowcaptionskip}{-0.01cm}
  \centering
  \scriptsize
  \caption{\small Average NMAE and running time of four methods for Movie-1M dataset \label{Movie-1M}}
  \scalebox{1}{
 \begin{tabular}{cc|lcc|lcc|lccc|lcc}
 \Xhline{0.7pt}
 ($n_r$,$n_c$)&\!\! SR & \multicolumn{3}{l}{\qquad Algorithm \ref{AMM}}&\multicolumn{3}{l}{\qquad Algorithm \ref{HMAP}}
 &\multicolumn{4}{l}{\ \ \  \qquad ALS}&\multicolumn{3}{l}{\qquad ADMM}\\
 \cmidrule(lr){3-5} \cmidrule(lr){6-8} \cmidrule(lr){9-12} \cmidrule(lr){13-15}
  & &\!\!\!\! NMAE&\!\!\! rank \!\!\!\! &time &  NMAE \!\!\!& rank \!\!\!& time& \ \ $c_{\lambda}$&\!\!\!\! NMAE& rank&\!\!\!\! time& NMAE&\!\!\! rank&\!\!\!\! time \\
 \Xhline{0.7pt}
 $1500\times 1500$ &0.10 & 0.242 & 1 & 22.8&{\color{red}\bf 0.229} & 1 & 4.41 & [0.5,8.2]& 0.251 & 1 & 6.50 &0.234 &850 &525.3\\
                   &0.15 & 0.226 & 1 & 25.4&{\color{red}\bf 0.218} & 1 & 4.44 & [0.4,5.2]& 0.250 & 1 & 6.55 &0.227 &999 &527.4\\
                   &0.20 & 0.212 & 1 & 24.4&{\color{red}\bf 0.209} & 1 & 4.49 & [0.3,3.4]& 0.249 & 1 & 6.07 &0.221 &1100&532.0\\
                   &0.25 & 0.207 & 1 & 29.7&{\color{red}\bf 0.205} & 1 & 4.85 & [0.3,3.0]& 0.247 & 1 & 7.65 &0.217 &1156&534.5\\
\Xhline{0.7pt}
 $2000\times 2000$ &0.10 & 0.228  & 1 & 41.5&{\color{red}\bf 0.219} & 1 & 8.11 &[0.8,9.4] & 0.251 &3 & 21.4 &0.231 &1245&1263.1 \\
                   &0.15 & 0.212  & 1 & 53.1&{\color{red}\bf 0.209} & 1 & 8.52 &[0.6,4.8] & 0.251 &1 & 10.9 &0.223 &1415&1271.9 \\
                   &0.20 & 0.207  & 1 & 44.1&{\color{red}\bf 0.204} & 1 & 8.70 &[0.5,3.8] & 0.250 &1 & 9.59 &0.219 &1524&1275.9 \\
                   &0.25 & 0.201  & 1 & 39.5&{\color{red}\bf 0.200} & 1 & 9.70 &[0.3,2.4] & 0.248 &1 & 12.8 &0.213 &1602&1363.3\\
\Xhline{0.7pt}
 $3000\times 3000$ &0.10 &                0.216 & 1 & 107.5&{\color{red}\bf 0.210 } & 1 & 23.9 &[1.2,7.8]& 0.253 &1  &22.8 & - &-&-\\
                   &0.15 &                0.204 & 1 & 104.8&{\color{red}\bf 0.202 } & 1 & 23.4 &[0.8,4.4]& 0.249 &1  &25.7 &- &-&-\\
                   &0.20 &                0.199 & 1 & 79.3 &{\color{red}\bf 0.197 } & 1 & 24.2 &[0.6,3.2]& 0.248 &1  &25.4 &- &-&-\\
                   &0.25 &{\color{red}\bf 0.195}& 1 & 81.4 &{\color{red}\bf 0.195 } & 1 & 28.0 &[0.4,2.2]& 0.242 &1  &29.1 &- &-&-\\
\Xhline{0.7pt}
$6040\times 3706$  &0.10 &                0.205 & 1 & 274.9& {\color{red}\bf 0.202 } & 1 & 56.9 &[1.3,6.8]& 0.251 & 1 & 50.1   & -&-&-\\
                   &0.15 &                0.197 & 1 & 218.2& {\color{red}\bf 0.196 } & 1 & 60.8 &[0.8,4.2]& 0.248 & 1 & 58.5  &- &-&-\\
                   &0.20 &{\color{red}\bf 0.194}& 1 & 174.5& {\color{red}\bf 0.194 } & 1 & 61.1 &[0.6,3.2]& 0.247 & 1 & 54.8    &- &-&-\\
                   &0.25 &{\color{red}\bf 0.192}& 1 & 177.1& {\color{red}\bf 0.192 } & 1 & 64.0 &[0.4,2.4]& 0.245 & 1 & 71.1 &- &-&-\\
 \Xhline{0.7pt}
 \end{tabular}}
 \end{table}

  For the Movie-1M dataset, we first randomly select $n_r$ users and their $n_c$
  column ratings from $M^0$, and then sample the observed entries with Scheme 1
  in \eqref{sampling-scheme}. We consider the setting of $n_r=n_c$ with $n_r=1500, 2000$
  or $3000$ and the setting of $(n_r,n_c)=(6040,3706)$. Table \ref{Movie-1M} reports
  the average NMAE, rank and running time (in seconds) after running {\bf five} times
  for each setting. We see that for this dataset, the solvers have similar
  performance as they do for the Movie-100K.

 \begin{table}[h]
 \setlength{\belowcaptionskip}{-0.01cm}
  \centering
  \scriptsize
  \caption{\small Average NMAE and running time of three methods for Netflix Dataset \label{Netflix}}
  \scalebox{1}{
 \begin{tabular}{ccc|lcc|lcc|lccc}
 \Xhline{0.7pt}
 &($n_r$,$n_c$)&\!\! SR & \multicolumn{3}{l}{\qquad Algorithm \ref{AMM}}&\multicolumn{3}{l}{\qquad Algorithm \ref{HMAP}}
 &\multicolumn{4}{l}{\qquad \ \ \ \  ALS}\\
 \cmidrule(lr){4-6} \cmidrule(lr){7-9} \cmidrule(lr){10-13}\\
  & &  & NMAE& rank  &time&   NMAE & rank & time  &\ \  $c_{\lambda}$& NMAE& rank& time \\
 \Xhline{0.7pt}
scheme 1
&$6000\times 6000 $
                    &0.10 & 0.228 & 1 & 440.1& {\color{red}\bf 0.218} & 1 & 121.4 &[1.1,8.4]& 0.246 &1& 110.1 \\
                &   &0.15 & 0.214 & 1 & 460.3& {\color{red}\bf 0.209} & 1 & 125.9 &[0.8,5.2]& 0.243 &1& 111.6 \\
                &   &0.20 & 0.208 & 1 & 496.6& {\color{red}\bf 0.204} & 1 & 127.4 &[0.6,3.4]& 0.244 &1& 113.9\\
                &   &0.25 & 0.203 & 1 & 452.1& {\color{red}\bf 0.201} & 1 & 133.2 &[0.5,3.0]& 0.242 &1& 127.9\\

&$8000\times 8000$  &0.10 & 0.214 & 1 & 726.4& {\color{red}\bf 0.208} & 1 & 140.7 &[1.3,9.0]& 0.246 &1 &106.5\\
                &   &0.15 & 0.206 & 1 & 762.0& {\color{red}\bf 0.203} & 1 & 148.5 &[0.8,5.4]& 0.244 &1 &104.4\\
                 &  &0.20 & 0.201 & 1 & 824.4& {\color{red}\bf 0.199} & 1 & 152.5 &[0.6,3.6]& 0.244 &1 &126.4 \\
                &   &0.25 & 0.198 & 1 & 760.4& {\color{red}\bf 0.196} & 1 & 163.4 &[0.5,2.8]& 0.241 &1 &130.6  \\

&$10000\times 10000$&0.10 & -  & - & - &{\color{red}\bf 0.207} & 1 & 217.3 &[1.4,8.2]& 0.245 & 1 &170.8\\
                &   &0.15 & -  & - & - &{\color{red}\bf 0.200} & 1 & 241.1 &[0.9,5.6]& 0.244 & 1 &158.1 \\
                &   &0.20 & -  & - & - &{\color{red}\bf 0.198} & 1 & 243.9 &[0.7,3.8]& 0.245 & 1 &176.2 \\
                &   &0.25 & -  & - & - &{\color{red}\bf 0.195} & 1 & 265.2 &[0.5,2.8]& 0.242 & 1 &191.4  \\
 \Xhline{0.7pt}
scheme 2 &$6000\times 6000$
                    &0.10 & 0.229 & 1 & 395.2&  {\color{red}\bf 0.219} & 1 & 78.3 &[1.0,8.4]& 0.246 &1 & 63.8 \\
                &   &0.15 & 0.216 & 1 & 433.1&  {\color{red}\bf 0.209} & 1 & 79.9 &[0.8,5.6]& 0.244 &1 & 63.4  \\
                &   &0.20 & 0.208 & 1 & 452.7&  {\color{red}\bf 0.204} & 1 & 83.5 &[0.6,3.6]& 0.245 &1 & 69.0\\
                &   &0.25 & 0.204 & 1 & 421.9&  {\color{red}\bf 0.201} & 1 & 84.0 &[0.4,3.0]& 0.243 &1 & 83.0\\

&$8000\times 8000$  &0.10 & 0.214  & 1 & 714.5  &{\color{red}\bf 0.209} & 1 & 141.2 &[1.2,9.0]& 0.246 &1 &102.8\\
                &   &0.15 & 0.206  & 1 & 712.8  &{\color{red}\bf 0.203} & 1 & 146.8 &[0.8,5.4]& 0.245 &1 &103.1 \\
                &   &0.20 & 0.201  & 1 & 803.2  &{\color{red}\bf 0.199} & 1 & 152.1 &[0.6,3.6]& 0.244 &1 &114.4 \\
                &   &0.25 & 0.198  & 2 & 763.6  &{\color{red}\bf 0.197} & 1 & 155.7 &[0.4,2.8]& 0.241 &1 &98.0  \\
  \Xhline{0.7pt}
 \end{tabular}}
 \end{table}

  We also consider the Netflix dataset in
  \url{https://www.kaggle.com/netflix-inc/netflix-prize-data\#qualifying.txt}.
  For this dataset, we first randomly select $n_r$ users and their $n_c$
  column ratings from $M^0$, and then sample the observed entries with
  the schemes in \eqref{sampling-scheme}. We consider the setting of $n_r=n_c$
  with $n_r=6000,8000$ and $10000$. Table \ref{Netflix} reports the average NMAE,
  rank and running time (in seconds) of three solvers after running {\bf five} times
  for each setting (the results of ADMM are not reported for these instances since
  it is too time-consuming). For this dataset, the three solvers have similar
  performance as they do for the MovieLens dataset. Among others, Algorithm \ref{HMAP}
  yields better outputs than other two solvers do, and it requires less half of the time
  than Algorithm \ref{AMM} does. So, Algorithm \ref{HMAP} has a remarkable
  advantage in running time for large-scale instances.

 From the numerical tests of the previous two subsections, we conclude that
 for simulated data, Algorithm \ref{AMM} and \ref{HMAP} are superior to ALS
 and ADMM in terms of rank and relative error; and for the three real datasets,
 Algorithm \ref{HMAP} is superior to other three solvers in terms of
 rank and NMAE except for jester-3, and its running time is also comparable with that of ALS.
 \section{Conclusion}\label{sec6}

  We have proposed a column $\ell_{2,0}$-norm regularized factorization model
  for low-rank matrix recovery to achieve the optimal (or true) rank from
  a rough upper estimation, so that the recent theoretical results for
  factorization models work fully in practice. We verify from theory
  that this model is superior to the squared Frobenius-norm regularized model \eqref{MC-Fnorm};
  for example, the critical points of model \eqref{MS-FL20} associated to a suitable $\lambda$
  and a tiny $\mu$ will have rank $r^*$ if their objective values are not greater than
  that of the projection of the noisy observation onto the rank $r^*$-constraint set,
  and under a suitable condition on $f$, the solution associated to a local minimizer
  of model \eqref{MS-FL20} with rank $r^*$ has a better error bound to the true $M^*$ than
  the solution associated to a local minimizer of model \eqref{MC-Fnorm} with rank $r^*$ does.
  We have developed an AMM method and a hybrid AMM method for computing this model,
  and provided their global convergence analysis. Numerical experiments are conducted
  on simulated data and real datasets for matrix completion problem with non-uniform sampling,
  and comparison results with the ALS \cite{Hastie15} and the ADMM \cite{Fang18}
  show that the proposed model has an advantage in promoting solutions with lower errors
  and ranks, and the hybrid AMM method is superior to other three solvers for
  most of test instances in terms of the error, rank and running time.
  The interesting future work is about the statistical study on the proposed model.

 \bigskip
 \noindent
 {\large\bf Acknowledgements}\ \ The authors would like to express their sincere thanks
 to Prof. Ethan X. Fang from Pennsylvania State University for providing them with the ADMM code
 for numerical comparison. The authors would like to express their sincere thanks to
 two anonymous referees and the Associated Editor for their helpful comments.

\bibliographystyle{siamplain}

 \bigskip
 \noindent
 {\bf\large Appendix A:}\\

 This part provides a lower bound to $M^*$ for the solution
 associated to a critical point of $F_{\lambda}$.
 \begin{aproposition}\label{critpoint-lowbound}
  Let $f(X)\!:=h(\mathcal{A}(X)-b)$ where $h\!:\mathbb{R}^p\to\mathbb{R}$ is an $L_{h}$-smooth function, $\mathcal{A}\!:\mathbb{R}^{n\times m}\to\mathbb{R}^p$ is the sampling operator,
  and $b=\mathcal{A}(M^*)+\omega$ for a noise vector $\omega\in\mathbb{R}^p$.
  Then, for any nonzero $(\overline{U},\overline{V})\in{\rm crit}F_{\lambda}$,
  it holds that
  \(
    \|\overline{U}\overline{V}^\mathbb{T}\!-M^*\|_F
    \geq\max\big(0,\frac{\lambda-\|\mathcal{A}^*\nabla h(\omega)\|}{L_{h}\|\mathcal{A}\|^2}\big).
  \)
 \end{aproposition}
 \begin{proof}
  Fix any nonzero $(\overline{U},\overline{V})\in{\rm crit}F_{\lambda}$.
  By the expression of $F_{\lambda}$, it is immediate to have that
  $\nabla\!f(\overline{U}\overline{V}^{\mathbb{T}})\overline{V}+\lambda\overline{U}=0$
  and $[\nabla\!f(\overline{U}\overline{V}^{\mathbb{T}})]^{\mathbb{T}}\overline{U}+\lambda\overline{V}=0$.
  Then, it holds that
  \[
    \langle \overline{U},\nabla\!f(\overline{U}\overline{V}^{\mathbb{T}})\overline{V}+\lambda\overline{U}\rangle=0
    \ \ {\rm and}\ \ \langle\overline{V},\big[\nabla\!f(\overline{U}\overline{V}^{\mathbb{T}})\big]^\mathbb{T}\overline{U}+\lambda\overline{V} \rangle=0.
  \]
  Recall that $\overline{U}^\mathbb{T}\overline{U}=\overline{V}^\mathbb{T}\overline{V}$ (see \cite[Proposition 4.3]{Li18}).
  Together with the last two equalities, we obtain
  \[
     -\lambda\|\overline{U}\|_F^2= \langle \overline{U},\nabla\!f(\overline{U}\overline{V}^{\mathbb{T}})\overline{V}\rangle
   \ge -\|\overline{U}\overline{V}^{\mathbb{T}}\|_*\|\nabla\!f(\overline{U}\overline{V}^{\mathbb{T}})\|
   \ge -\|\overline{U}\|_F^2\|\nabla\!f(\overline{U}\overline{V}^{\mathbb{T}})\|,
  \]
  where the last inequality is since
  \(
   \|X\|_*=\displaystyle{\min_{U\in\mathbb{R}^{n\times r},V\in\mathbb{R}^{m\times r}}}
   \big\{\frac{1}{2}(\|U\|_{F}^2+\|V\|_{F}^2)\ {\rm s.t.}\ X=UV^{\mathbb{T}}\big\}.
 \)
 Note that $\overline{U}\ne 0$ and $\overline{V}\ne 0$ since $(\overline{U},\overline{V})$ is nonzero
 and $\overline{U}^\mathbb{T}\overline{U}=\overline{V}^\mathbb{T}\overline{V}$.
 From the last inequality, it follows that $\|\nabla\!f(\overline{U}\overline{V}^{\mathbb{T}})\|\geq\lambda$.
 Substituting $\nabla\!f(X)=\mathcal{A}^*\nabla h(\mathcal{A}(X)-b)$ into this inequality
 and using the Lipschitz continuity of $\nabla h$ yields that
 \(
  \lambda\le L_{h}\|\mathcal{A}\|^2\|\overline{U}\overline{V}^\mathbb{T}\!-M^*\|_F+\|\mathcal{A}^*\nabla h(\omega)\|.
 \)
 This implies that the desired inequality holds. The proof is completed.
 \end{proof}

 Note that a suitably large $\lambda$ is necessary for model \eqref{MC-Fnorm} to achieve
 a solution with rank close to $r^*$ if the upper estimation $r$ is too rough. Together with
 the lower bound in Proposition \ref{critpoint-lowbound}, such $\lambda$ will lead to
 a large error bound to the true $M^*$ for the solution corresponding to the critical point
 of \eqref{MC-Fnorm}. Thus, it is very hard to achieve a solution with a small error
 and a rank close to the true $r^*$ by solving model \eqref{MC-Fnorm}.
 The last subfigure in Figure \ref{fig1} precisely shows this phenomenon.

 {
   \bigskip
  \noindent
  {\bf\large Appendix B: The proof of Proposition \ref{prop1-UVk}.}\\

  \begin{aproof}
  By the optimality of $U^{k+1}$ and the feasibility of $U^k$ to \eqref{Uk-subprob},
  it follows that
  \begin{align}\label{temp-ineq1}
   &\langle\nabla_{\!1}F(\widetilde{U}^k,V^k),U^{k+1}\rangle+\frac{\mu}{2}\|U^{k+1}\|_F^2
   +\frac{\gamma_{1,k}}{2}\|U^{k+1}\!-\!\widetilde{U}^k\|_F^2 +\lambda\|U^{k+1}\|_{2,0}\nonumber\\
   &\le\langle\nabla_{\!1}F(\widetilde{U}^k,V^k),U^{k}\rangle+\frac{\mu}{2}\|U^{k}\|_F^2
   +\frac{\gamma_{1,k}}{2}\|U^{k}\!-\!\widetilde{U}^k\|_F^2 +\lambda\|U^{k}\|_{2,0}.
  \end{align}
  By invoking inequality \eqref{FU} with $\gamma=\gamma_{1,k},V=V^k, U'=U^{k+1}$ and $U=\widetilde{U}^k$,
  we obtain
  \begin{align}\label{convex-monotone1}
   &F(U^{k+1},V^k)\le F(\widetilde{U}^{k},V^k)
    +\langle\nabla_{\!1}F(\widetilde{U}^{k},V^k),U^{k+1}\!-\!\widetilde{U}^k\rangle
    +\frac{\tau_{V^{k}}}{2}\|U^{k+1}\!-\!\widetilde{U}^k\|_F^2\nonumber\\
   &\le F(U^k,V^k)+\langle\nabla_{\!1}F(\widetilde{U}^{k},V^k),U^{k+1}-U^k\rangle
    +\frac{\tau_{V^{k}}}{2}\|U^{k}\!-\!\widetilde{U}^k\|_F^2+\frac{\tau_{V^{k}}}{2}\|U^{k+1}\!-\!\widetilde{U}^k\|_F^2
  \end{align}
  where the last inequality is by \eqref{nFU} with $\gamma=\tau_{V^{k}},V=\!V^{k},U=\!\widetilde{U}^{k}$ and
  $U'=U^k$. Along with \eqref{temp-ineq1},
 \begin{align}\label{FU-ineq1}
  F(U^{k+1},V^k)&\le F(U^k,V^k)+\frac{\mu}{2}\|U^{k}\|_F^2+\lambda\|U^{k}\|_{2,0}
                    +\frac{\gamma_{1,k}+\tau_{V^{k}}}{2}\|U^{k}\!-\!\widetilde{U}^k\|_F^2\nonumber \\
                &\quad-\frac{\gamma_{1,k}-\tau_{V^{k}}}{2}\|U^{k+1}\!-\!\widetilde{U}^k\|_F^2
                  -\frac{\mu}{2}\|U^{k+1}\|_F^2-\lambda\|U^{k+1}\|_{2,0}.
 \end{align}
 By using the same arguments as those for \eqref{FU-ineq1} and the optimality of $V^{k+1}$
  to \eqref{Vk-subprob}, it follows that
  \begin{align}\label{FV-ineq1}
   F(U^{k+1},V^{k+1})
   &\le F(U^{k+1},V^k)+\frac{\mu}{2}\|V^{k}\|_F^2+\lambda\|V^{k}\|_{2,0}
   -\frac{\mu}{2}\|V^{k+1}\|_F^2 -\lambda\|V^{k+1}\|_{2,0}\nonumber\\
   &\quad +\frac{\gamma_{2,k}+\tau_{U^{k+1}}}{2}\|V^{k}\!-\!\widetilde{V}^k\|_F^2
   -\frac{\gamma_{2,k}-\tau_{U^{k+1}}}{2}\|V^{k+1}\!-\!\widetilde{V}^k\|_F^2.
  \end{align}
   By substituting \eqref{FU-ineq1} into this inequality and using
   the definition of $\Phi_{\lambda,\mu}$, it follows that
  \begin{align}\label{key-ineq31}
   \Phi_{\lambda,\mu}(U^{k+1},V^{k+1})
   &\le\Phi_{\lambda,\mu}(U^{k},V^{k})+\frac{\gamma_{1,k}+\tau_{V^{k}}}{2}\|U^{k}\!-\!\widetilde{U}^k\|_F^2
     +\frac{\gamma_{2,k}+\tau_{U^{k+1}}}{2}\|V^{k}\!-\!\widetilde{V}^k\|_F^2\nonumber\\
   &\quad -\frac{\gamma_{1,k}-\tau_{V^{k}}}{2}\|U^{k+1}\!-\!\widetilde{U}^k\|_F^2
         -\frac{\gamma_{2,k}-\tau_{U^{k+1}}}{2}\|V^{k+1}\!-\!\widetilde{V}^k\|_F^2.
  \end{align}
  Together with $\widetilde{U}^k=U^k+\beta_k(U^k-U^{k-1})$
  and $\widetilde{V}^k=V^k+\beta_k(V^k\!-\!V^{k-1})$ and
  the definitions of $\alpha_{1,k}$ and $\alpha_{2,k}$,
  we deduce that for each integer $k\ge 0$ the left hand side of \eqref{descent-ineq1} is not more than
  \begin{align*}
  {\rm RHT}&=\frac{2\tau_{V^{k}}\beta_k^2-\rho_1\alpha_{1,k}}{2}\big\|U^{k}\!-\!U^{k-1}\big\|_F^2
     -\frac{(1-\rho_1)(\gamma_{1,k}-\tau_{V^{k}})}{2}\big\|U^{k+1}\!-\!U^k\big\|_F^2\\
   &\quad +(\gamma_{1,k}\!-\!\tau_{V^{k}})\beta_k\langle U^{k+1}\!-\!U^k,U^k\!-\!U^{k-1}\rangle
         +(\gamma_{2,k}\!-\!\tau_{U^{k+1}})\beta_k\langle V^{k+1}\!-\!V^k,V^k\!-\!V^{k-1}\rangle\\
   &\quad +\frac{2\tau_{U^{k+1}}\beta_k^2-\rho_2\alpha_{2,k}}{2}\|V^{k}\!-\!V^{k-1}\|_F^2
         -\frac{(1-\rho_2)(\gamma_{2,k}-\tau_{U^{k+1}})}{2}\|V^{k+1}\!-\!V^k\|_F^2\\
   &\le-\Big(\frac{\rho_1\alpha_{1,k}-2\tau_{V^{k}}\beta_k^2}{2}-\frac{\beta_k^2}{2t_1^k}\Big)
         \|U^{k}\!-\!U^{k-1}\|_F^2\nonumber\\
   &\quad -\frac{(1-\rho_1)(\gamma_{1,k}-\tau_{V^{k}})-t_1^k(\gamma_{1,k}\!-\!\tau_{V^{k}})^2}{2}
     \|U^{k+1}\!-\!U^k\|_F^2\\
   &\quad -\Big(\frac{\rho_2\alpha_{2,k}-2\tau_{U^{k+1}}\beta_k^2}{2}-\frac{\beta_k^2}{2t_2^k}\Big)
    \|V^{k}\!-\!V^{k-1}\|_F^2\nonumber\\
   &\quad -\frac{(1-\rho_2)(\gamma_{2,k}-\tau_{U^{k+1}})-t_2^k(\gamma_{2,k}\!-\!\tau_{U^{k+1}})^2}{2}
     \|V^{k+1}\!-\!V^k\|_F^2
  \end{align*}
  for any $t_1^k>0$ and $t_2^k>0$. In particular, taking
  $t_1^k=\frac{1-\rho_1}{\gamma_{1,k}-\tau_{V^{k}}}$ and
  $t_2^k=\frac{1-\rho_2}{\gamma_{2,k}-\tau_{U^{k+1}}}$ yields \eqref{descent-ineq1}.
  \end{aproof}

  \bigskip
  \noindent
  {\bf\large Appendix C: The proof of Proposition \ref{prop2-UVk}.}\\

  \begin{aproof}
  {\bf(i)} By following the same arguments as those for Proposition \ref{prop1-UVk},
  one may obtain
  \begin{align*}
   &\Xi_{\lambda,\mu}(U^{k+1},V^{k+1},U^k,V^k)-\Xi_{\lambda,\mu}(U^{k},V^{k},U^{k-1},V^{k-1})\nonumber\\
   &\le\frac{\rho_1\alpha_2}{2}\big(\|U^{k+1}\!-\!U^{k}\|_F^2-\|U^{k}\!-\!U^{k-1}\|_F^2\big)
       +\frac{\rho_2\alpha_2}{2}\big(\|V^{k+1}\!-\!V^{k}\|_F^2-\|V^{k}\!-\!V^{k-1}\|_F^2\big)\nonumber\\
   &\quad +\frac{\gamma_{1,k}+\tau_{V^{k}}}{2}\|U^{k}\!-\!\widetilde{U}^k\|_F^2
     +\frac{\gamma_{2,k}+\tau_{U^{k+1}}}{2}\|V^{k}\!-\!\widetilde{V}^k\|_F^2\nonumber\\
   &\quad -\frac{\gamma_{1,k}-\tau_{V^{k}}}{2}\|U^{k+1}\!-\!\widetilde{U}^k\|_F^2
         -\frac{\gamma_{2,k}-\tau_{U^{k+1}}}{2}\|V^{k+1}\!-\!\widetilde{V}^k\|_F^2.
  \end{align*}
  Then, using the same analysis technique as those for RHT after \eqref{key-ineq31}
  yields the result.

  \noindent
  {\bf(ii)-(iii)} Part (ii) holds by Proposition \ref{prop1-UVk} and
  the coerciveness of $\Xi_{\lambda,\mu}$. We next focus
  on the proof of part (iii). By part (i), the nonnegative sequence
  $\{\Xi_{\lambda,\mu}(U^{k},V^{k},U^{k-1},V^{k-1})\}_{k\in\mathbb{N}}$
  is nonincreasing. So, the limit $\varpi^*$ exists.
  Fix an arbitrary $(\overline{U},\overline{V},\overline{Y},\overline{Z})\in\Upsilon$.
  There is an index set $\mathcal{K}\subseteq\mathbb{N}$ such that
  $({U}^{k},{V}^{k},{U}^{k-1},{V}^{k-1})
 \rightarrow(\overline{U},\overline{V},\overline{Y},\overline{Z})$ when
 $\mathcal{K}\ni k\rightarrow\infty$.
 By the feasibility of $\overline{U}$ to \eqref{Uk-subprob}, for each $k$,
 \begin{align*}
  &\langle\nabla_{\!1}F(\widetilde{U}^{k-1},V^{k-1}),U^{k}\rangle
  +\frac{\mu}{2}\|U^{k}\|_F^2+\lambda\|{U}^{k}\|_{2,0}+
  \frac{\gamma_{1,k-1}}{2}\|U^{k}-\widetilde{U}^{k-1}\|_F^2\\
  &\le\langle\nabla_{\!1}F(\widetilde{U}^{k-1},V^{k-1}),\overline{U}\rangle
    +\frac{\mu}{2}\|\overline{U}\|_F^2+\lambda\|\overline{U}\|_{2,0}
   +\frac{\gamma_{1,k-1}}{2}\|\overline{U}-\widetilde{U}^{k-1}\|_F^2.
 \end{align*}
 Passing to the limit $k\xrightarrow[\mathcal{K}]{}\infty$
 and using the boundedness of $\gamma_{1,k}$,
 \(
  \limsup_{k\xrightarrow[\mathcal{K}]{}\infty}\|{U}^{k}\|_{2,0} \leq\|\overline{U}\|_{2,0}.
 \)
 In addition, by the lower semicontinuity of $\|\cdot\|_{2,0}$, we have
 $\liminf_{k\xrightarrow[\mathcal{K}]{}\infty}\|{U}^{k}\|_{2,0}\ge\|\overline{U}\|_{2,0}$.
 Thus, $\lim_{k\xrightarrow[\mathcal{K}]{}\infty}\|{U}^{k}\|_{2,0} =\|\overline{U}\|_{2,0}$.
 Similarly, we also have $\lim_{k\xrightarrow[\mathcal{K}]{}\infty}\|V^{k}\|_{2,0} =\|\overline{V}\|_{2,0}$.
 Together with the expression of $\Xi_{\lambda,\mu}$,
 $\lim_{k\xrightarrow[\mathcal{K}]{}\infty}\Xi_{\lambda,\mu}({U}^{k},{V}^{k},{U}^{k-1},{V}^{k-1})
 =\Xi_{\lambda,\mu}(\overline{U},\overline{V},\overline{Y},\overline{Z})$.
 Since the limit of the sequence
 $\{\Xi_{\lambda,\mu}({U}^{k},{V}^{k},{U}^{k-1},{V}^{k-1})\}_{k\in\mathbb{N}}$
 is exactly $\varpi^*$. This implies that $\Xi_{\lambda,\mu}(\overline{U},\overline{V},\overline{Y},\overline{Z})=\varpi^*$.
 By the arbitrariness of $(\overline{U},\overline{V},\overline{Y},\overline{Z})$
 on the set $\Upsilon$, it follows that $\Xi_{\lambda,\mu}\equiv\varpi^*$ on $\Upsilon$.

 \noindent
 {\bf(iv)} By the expression of $\Xi_{\lambda,\mu}$ and \cite[Exercise 8.8]{RW98},
 for any $(U,V,U',V')$ it holds that
 \begin{align}\label{gradXi-UV1}
 \partial\Xi_{\lambda,\mu}(U,V,U',V')
 =\left[\begin{matrix}
   \nabla f(UV^\mathbb{T})V+\!\mu U + \lambda \partial \|U\|_{2,0}+\rho_1\alpha_2(U-U')\\
   (\nabla f(UV^\mathbb{T}))^{\mathbb{T}}U+\!\mu V+ \lambda\partial\|V\|_{2,0}+\rho_2\alpha_2(V-V')\\
   \rho_1\alpha_2(U'-U)\\
   \rho_2\alpha_2(V'-V)
   \end{matrix}\right].
 \end{align}
  In addition, from the definition of $U^{k+1}$ and $V^{k+1}$ in Step 1 and 2,
  for each $k\in\mathbb{N}$ it follows that
  \begin{subnumcases}{}
   \label{optUk-equa}
   0\in\nabla\!f(\widetilde{U}^{k}(V^{k})^\mathbb{T})V^{k}+\mu U^{k+1}
    +\gamma_{1,k}(U^{k+1}-\widetilde{U}^k)+\lambda\partial\|U^{k+1}\|_{2,0};\quad\\
   \label{optVk-equa}
  0\in[\nabla\!f(U^{k+1}(\widetilde{V}^{k})^\mathbb{T})]^\mathbb{T}U^{k+1}+\mu V^{k+1}
    +\gamma_{2,k}(V^{k+1}-\widetilde{V}^k)+\lambda\partial\|V^{k+1}\|_{2,0}.
  \end{subnumcases}
  Hence,
  \(
    \big(\Gamma_U^{k+1},\Gamma_V^{k+1},\rho_1\alpha_2(U^{k}\!-\!U^{k+1}),
    \rho_2\alpha_2(V^{k}\!-\!V^{k+1})\big)
    \in\partial\Xi_{\lambda,\mu}(U^{k+1},V^{k+1},U^{k},V^{k})
  \)
 with
 \begin{subnumcases}{}
   \Gamma_U^{k+1}\!=\!\nabla f(U^{k+1}({V}^{k+1})^\mathbb{T})V^{k+1}\!-\!\nabla f(\widetilde{U}^{k}(V^{k})^\mathbb{T})V^{k}\!-\!\gamma_{1,k}(U^{k+1}\!-\!\widetilde{U}^k)
         \!+\! \rho_1\alpha_2(U^{k+1}\!-\!U^{k});\nonumber\\
   \Gamma_V^{k+1}\!=\!\big[\!\nabla\! f(U^{k+1}({V}^{k+1})^\mathbb{T}\!)\!-\!\nabla f(U^{k+1}(\!\widetilde{V}^{k}\!)^\mathbb{T}\!)\big]^\mathbb{T}U^{k+1}
         \!-\!\gamma_{2,k}(V^{k+1}\!-\!\widetilde{V}^k)\!+\!\rho_2\alpha_2(V^{k+1}\!-\!V^{k}).\nonumber
 \end{subnumcases}
  This means that the distance ${\rm dist}\big(0,\partial \Xi_{\lambda,\mu}(U^{k+1},V^{k+1},U^k,V^k)\big)$
  is upper bounded by
  \begin{align*}
   &\sqrt{\|\Gamma_U^{k+1}\|_F^2+\|\Gamma_V^{k+1}\|_F^2+\rho_1^2\alpha_2^2\|U^{k}-U^{k+1}\|_F^2
      +\rho_2^2\alpha_2^2\|V^{k}-V^{k+1}\|_F^2}\\
   &\leq (\tau_{V^{k}}\!+\!\gamma_{1,k})\|U^{k+1}\!-\widetilde{U}^{k}\|_F +2\rho_1\alpha_2\|U^{k+1}\!-\!U^{k}\|_F
        \!+(\tau_{U^{k+1}}\!+\!\gamma_{2,k})\|V^{k+1}\!-\widetilde{V}^{k}\|_F \\
   &\quad+(c_f+2\rho_2\alpha_2\!+\!\sqrt{\tau_{U^{k+1}}\tau_{V^{k}}})\|V^{k+1}-V^{k}\|_F\\
   &\le(\tau_{V^{k}}+\gamma_{1,k}+2\rho_1\alpha_2)\|U^{k+1}\!-\!{U}^{k}\|_F +(\tau_{V^{k}}+\gamma_{1,k})\beta_k\|U^{k}-U^{k-1}\|_F\!\\
   &\quad +(c_f\!\!+\!2\rho_2\alpha_2+\tau_{U^{k+1}}+\gamma_{2,k}\!+\!\sqrt{\tau_{U^{k+1}}\tau_{V^{k}}})\|V^{k+1}\!-\!V^{k}\|_F
     +(\tau_{U^{k+1}}\!+\!\gamma_{2,k})\beta_k\|V^{k}\!-\!{V}^{k-1}\|_F.
  \end{align*}
  This implies that the desired inequality holds. Thus, we complete the proof.
 \end{aproof}
 }
 \end{document}